\documentclass[11pt,reqno]{amsart}

\usepackage[utf8]{inputenc}
\usepackage[english]{babel}
\usepackage[a4paper]{geometry}

\usepackage{amssymb,amsmath,amsthm}
\usepackage[v2,tips]{xy}
\usepackage{xcolor}
\usepackage{hyperref}

% Drafting
%\usepackage{backref}

\newcommand\blfootnote[1]{%
  \begingroup
  \renewcommand\thefootnote{}\footnote{#1}%
  \addtocounter{footnote}{-1}%
  \endgroup
}

\theoremstyle{plain}
\newtheorem{proposition}{Proposition}[section]
\newtheorem{theorem}[proposition]{Theorem}
\newtheorem{corollary}[proposition]{Corollary}
\newtheorem{lemma}[proposition]{Lemma}
\newtheorem{claim}[proposition]{Claim}
\newtheorem{definition}[proposition]{Definition}
\newtheorem{example}[proposition]{Example}

\theoremstyle{definition}
\newtheorem{remark}[proposition]{Remark}
\newtheorem{ack}[proposition]{Acknowledgement}

\numberwithin{equation}{section}

\DeclareMathOperator*{\Ran}{\mathrm{Ran}}
\DeclareMathOperator*{\Ker}{\mathrm{Ker}}
\DeclareMathOperator*{\inv}{\mathrm{inv}}
\DeclareMathOperator*{\asy}{\mathrm{Asy}}
\newcommand{\support}{\operatorname{supp}}
\newcommand{\spanning}{\operatorname{span}}

\newcommand{\mc}{\mathcal}

\title[A purely infinite Cuntz-like Banach $*$-algebra]{A purely infinite Cuntz-like Banach $*$-algebra with no purely infinite ultrapowers}
\author{Matthew Daws and Bence Horv\'{a}th}

\newcommand{\Addresses}{{% additional braces for segregating \footnotesize
  \bigskip
  \footnotesize
  
  \textit{Addresses}: \\
  Matthew Daws: \textsc{Jeremiah Horrocks Institute, University of Central Lancashire, Preston, PR1~2HE, United Kingdom} \\
  Bence Horv\'{a}th: \textsc{Institute of Mathematics, Czech Academy of Sciences, \v{Z}itn\'a 25, 115 67 Prague 1, Czech Republic}\par\nopagebreak
  \textit{E-mail addresses}: \\ 
  Matthew Daws: \texttt{matt.daws@cantab.net, mdaws@uclan.ac.uk} \\
  Bence Horv\'{a}th: \texttt{horvath@math.cas.cz, hotvath@gmail.com}
}}

\begin{document}

\begin{abstract}
We continue our investigation, from \cite{dh}, of the ring-theoretic infiniteness properties of ultrapowers of Banach algebras, studying in this paper the notion of being purely infinite.  It is well known that a $C^*$-algebra is purely infinite if and only if any of its ultrapowers are.  We find examples of Banach algebras, as algebras of operators on Banach spaces, which do have purely infinite ultrapowers.  Our main contribution is the construction of a ``Cuntz-like'' Banach $*$-algebra which is purely infinite, but whose ultrapowers are not even simple, and hence not purely infinite. This algebra is a naturally occurring analogue of the Cuntz algebra, and of the $L^p$-analogues introduced by Phillips.  However, our proof of being purely infinite is combinatorial, but direct, and so differs from existing proofs. We show that there are non-zero traces on our algebra, which in particular implies that our algebra is not isomorphic to any of the $L^p$-analogues of the Cuntz algebra.
\end{abstract}

\maketitle

\subjclass{2020}{\textit{ Mathematics Subject Classification.} 46M07, 46H10, 46H15 (primary); 43A20 (secondary)

\blfootnote{\keywords{\textit{Key words and phrases.} Asymptotic sequence algebra, Banach $*$-algebra, Cuntz semigroup, Leavitt algebra, purely infinite, semigroup algebra, ultrapower}}

\section{Introduction and preliminaries}

\subsection{Introduction}
We continue our study of infiniteness properties of Banach algebras, and how these interact with reduced products, in the continuous model theory sense, which we initiated in \cite{dh}.  Recall that an idempotent $p$ in an algebra $\mc A$ is \emph{infinite} if it is (algebraically Murray--von Neumann) equivalent to a proper sub-idempotent of itself.  One prominent property which we did not study in \cite{dh} is that of being \emph{purely infinite}, which for simple rings could be defined by saying that every left ideal contains an infinite idempotent.  We discuss this notion, and the literature surrounding it, in Section~\ref{sec:pi} below.  This definition is equivalent, for a unital Banach algebra, to $\mc A$ not being $\mathbb C$, and that for $a\in\mc A$ non-zero there are $b,c\in\mc A$ with $bac=1$.  This generalises the definition for $C^*$-algebras.

As a purely infinite Banach algebra must be simple, the asymptotic sequence algebra of $\mc A$ is never purely infinite, see Remark~\ref{r: asy_not_pi} below.  We thus focus on ultrapowers in this paper.  As in \cite{dh}, and perhaps not surprisingly from the perspective of continuous model theory, we find that an ultrapower $(\mc A)_{\mc U}$ is purely infinite if and only if it satisfies a ``metric'' form of the definition, where we have some sort of norm control. That purely infinite $C^*$-algebras have purely infinite ultrapowers follows from such norm control always being available.

In \cite{dh} we found examples of Banach algebras which did, and did not, have suitable forms of norm control.  Our major tool was to look at weighted semigroup algebras, where the weight allowed us to vary the norm control which we obtained.  Surprisingly, in this paper we have no need to consider weights.  Thus our examples are somewhat more ``natural'', and indeed, in showing that our principal example does not have simple (hence purely infinite) ultrapowers, we proceed in a somewhat indirect way, and avoid directly computing norms in the ultrapower.

The structure of the paper is as follows.  In the remainder of the introduction, we provide a more detailed introduction to purely infinite algebras, and recall the ultrapower construction.  In Section~\ref{sec:norm_control} we define a suitable ``quantified'' definition of being purely infinite, and show that this does indeed capture when ultrapowers are purely infinite.  We show quickly how this gives that purely infinite $C^*$-algebras have purely infinite ultrapowers.

In Section~\ref{sec:eg_pi_ba}, we provide natural examples of Banach algebras which do have purely infinite ultrapowers.  These are built as algebras of operators on suitable Banach spaces. Finally, we show that if a Banach algebra $\mc A$ does have simple ultrapowers, then it behaves a little like a $C^*$-algebra, in the sense that non-zero continuous algebra homomorphisms out of $\mc A$ must be bounded below. We use this property to show that our main example does not have simple ultrapowers. In particular, it cannot have purely infinite ultrapowers either (\textit{cf.\ }Lemma~\ref{l: pui} (1)).

In Section~\ref{sec:cuntz_alg} we present our main construction.  As in \cite{dh}, we use the \emph{Cuntz semigroup} $\text{Cu}_2$, which is a semigroup with zero element, modelled on the relations of the Cuntz algebra $\mc O_2$.  We study the semigroup algebra $\mc A=\ell^1(\text{Cu}_2\setminus\{\lozenge\}, \#)$, where we replace the semigroup zero by the algebra zero.  We recall some of the combinatorics of this semigroup.  There are two natural idempotents in this algebra, and we quotient by the relation that these idempotents sum to $1$, say leading to the algebra $\mc A/\mc J$.  By a delicate combinatorial argument, we show that the resulting Banach algebra is purely infinite: for any non-zero $a\in\mc A/\mc J$ we find $f\in \mc A$ which maps to $a$, and $g,h\in\mc A$ with $g\# f\# h=1$, see Theorem~\ref{t: main_new}.  To show that $\mc A/\mc J$ does not have simple ultrapowers, we construct a faithful, but not bounded below, representation on the Banach space $\ell^p$, for each $p \in [1, \infty)$.

The Banach algebra $\mc A$ has been previously studied in \cite{dlr}, but in relation to being properly infinite (and further we studied a ``weighted'' version of this algebra in \cite{dh}).  The underlying algebra, given by generators and relations, but without the $\ell^1$-norm completion, has a much longer history, as noticed by Phillips in \cite{p1}; compare Remark~\ref{rem:cohn_algs} below.  Indeed, Phillips makes a careful study of (in particular) the algebra $\mc O^p_2$, which, in our language, is the \emph{closure} of the image of $\mc A/\mc J$ in $\mc B(\ell^p)$. It is worth noting here that $\mc A/\mc J$ itself is the $\ell^1$-completion of the Leavitt algebra $L_2$; see Remark~\ref{rem:leavitt_algs}. As we consider in Remark~\ref{rem:phillips}, given the lack of ``permanence'' properties for purely infinite Banach algebras, there appears to be no logical connections between our results and those of Phillips.  In particular, Phillips shows that $\mc O_2^p$ is purely infinite, but we have been unable to decide if $\mc O_2^p$ has purely infinite ultrapowers, or not.

However, an immediate corollary of the material presented in Section~\ref{s: traces} is that $\mc A / \mc J$ and $\mc O^p_2$ are \textit{not} isomorphic for any $p \in [1, \infty)$ (see Theorem~\ref{t: aj_o_2_nonisomorphic}). Section~\ref{s: traces} is devoted to the study of traces on $\mc A / \mc J$ and $\mc O^p_2$. Namely, we show that there \textit{are} non-zero bounded traces on $\mc A / \mc J$ (see Theorem~\ref{t: trace_on_aj}) whilst there are \textit{no} non-zero bounded traces on $\mc O^p_2$, for any $p \in [1, \infty)$ (see Theorem~\ref{t: no_trace_on_o_2}).

Unless stated otherwise, we will use the same notation and terminology as in \cite{dh}.

\subsection{Purely infinite algebras}\label{sec:pi}

Let $\mc A$ be an algebra. We say that two idempotents $p,q \in \mc A$ are \textit{algebraically Murray--von Neumann equivalent} or simply \textit{equivalent} (in notation, $p \sim q$) if there exist $a,b \in \mc A$ such that $p = ab$ and $q=ba$. Note that $\sim$ is an equivalence relation on the set of idempotents of $\mc A$. We say that the idempotents $p,q \in \mc A$ are \textit{orthogonal} (in notation, $p \perp q$) if $pq=0=qp$.  
An idempotent $p$ is \emph{infinite} if $p = q+r$ for orthogonal idempotents $q,r \in \mc A$ with $p\sim q$ and $r\not=0$.

If $\mc A$ is additionally a $*$-algebra, then a self-adjoint idempotent is called a \textit{projection}.  Here one often takes a different notion of equivalence, which for $C^*$-algebras is well-known to give the same definitions; compare \cite[Section~2]{dh}.

The notion of a $C^*$-algebra being \emph{purely infinite} is well-known, and has many equivalent definitions, mostly studied for simple algebras, but also in the non-simple case, \cite{kr}.  Purely infinite $C^*$-algebras appear prominently in the classification programme for $C^*$-algebras, \cite{p}, in particular in the guise of the \emph{Kirchberg algebras}.  It is common to take as a definition that a $C^*$-algebra is purely infinite if every hereditary subalgebra contains an infinite projection.

In a more general direction, the notion of a simple ring being purely infinite was studied in \cite{agp}, where it is taken as definition that a simple ring $R$ is purely infinite if every right (or equivalently, left) ideal of $R$ contains an infinite idempotent.  Consideration of what it means for a non-simple ring to be purely infinite is given in \cite{agps}.

Common to both definitions (in the simple case) is the following equivalence; for $C^*$-algebras see for example \cite[Theorem~V.5.5]{dav} while for rings see \cite[Theorem~1.6]{agp}.

\begin{definition}
A complex unital algebra $\mc A$ is \emph{purely infinite} if it is not a division algebra and for every $a \in \mc A$ non-zero there exist $b,c \in \mc A$ such that $1_{\mc A} = bac$.
\end{definition}

In this paper, we shall work only with this definition.
Note that by the Gel'fand--Mazur Theorem a complex unital normed algebra is a division algebra if and only if it is isomorphic to the field of complex numbers $\mathbb{C}$. In the rest of the paper all algebras are assumed to be complex.

We finish the section with the following.
We recall that a unital algebra $\mc A$ is \textit{properly infinite} if there exist idempotents $p,q \in \mc A$ with $p \sim 1_{\mc A}$, $q \sim 1_{\mc A}$ and $p \perp q$.

\begin{lemma}\label{l: pui}
	Let $\mc A$ be a purely infinite algebra. Then $\mc A$ is
	\begin{enumerate}
		\item[(1)] simple; and
		\item[(2)] properly infinite.
	\end{enumerate}
\end{lemma}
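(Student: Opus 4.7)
For part (1) the plan is immediate: take a non-zero two-sided ideal $I \subseteq \mc A$ and some non-zero $a \in I$; applying the defining property of pure infiniteness produces $b, c \in \mc A$ with $1_{\mc A} = bac$, and since $I$ is two-sided this forces $1_{\mc A} \in I$, hence $I = \mc A$.

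For part (2) the strategy is to apply a ``corner trick'' twice to produce two orthogonal idempotents each equivalent to $1_{\mc A}$. The trick is: if $e$ is any idempotent and $b' e c' = 1_{\mc A}$, then $q := e c' b' e$ is an idempotent (since $q^2 = e c' (b' e c') b' e = e c' b' e = q$), satisfies $q \sim 1_{\mc A}$ with witnesses $e c'$ and $b' e$ (note $b' e \cdot e c' = b' e c' = 1_{\mc A}$), and lies in $e \mc A e$, so any idempotent orthogonal to $e$ is automatically orthogonal to $q$. First I would apply this to any non-zero $a$: writing $1_{\mc A} = b a c$ and setting $p := a c b$ yields an idempotent with $p \sim 1_{\mc A}$. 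If $p \neq 1_{\mc A}$, then $e := 1_{\mc A} - p$ is a non-zero idempotent orthogonal to $p$, and the corner trick applied to $e$ produces a second idempotent $q$ with $q \sim 1_{\mc A}$ and $q \perp p$, establishing proper infiniteness.

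The main obstacle is guaranteeing $p \neq 1_{\mc A}$. Observing that $p = acb = 1_{\mc A}$ is equivalent to $a$ having a right inverse $cb$, it suffices to choose $a \neq 0$ with no right inverse. This is exactly where the hypothesis that $\mc A$ is not a division algebra enters: if every non-zero element of a unital algebra admitted a right inverse, then for $a \neq 0$ with $ab = 1_{\mc A}$, picking $d$ with $bd = 1_{\mc A}$, one computes $a = a(bd) = (ab)d = d$, whence $ba = bd = 1_{\mc A}$ and $a$ is invertible; so the algebra would be a division algebra, contradicting the hypothesis. Consequently such an $a$ exists, the construction closes, and the remaining verifications (that $q$ is idempotent, $q \sim 1_{\mc A}$, and $pq = qp = 0$ from $pe = ep = 0$) are straightforward computations.
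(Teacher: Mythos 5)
Your proof is correct and follows essentially the same route as the paper's: part (1) is identical, and for part (2) both arguments form the idempotent $p = acb$, pass to the complementary idempotent $1_{\mc A}-p$, and factor the identity through it to produce the second orthogonal idempotent $q$ equivalent to $1_{\mc A}$. The only (harmless) difference is that the paper takes $a$ non-zero and non-invertible and observes that at least one of $cba$, $acb$ must differ from $1_{\mc A}$, whereas you pre-select $a$ without a right inverse (justified by your correct auxiliary argument) so that $acb \neq 1_{\mc A}$ holds directly.
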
	

\begin{proof}
	We first show that $\mc A$ is simple. Let $\mc J$ be a non-zero, two-sided ideal in $\mc A$ and pick $a \in \mc J$ non-zero. There exist $b,c \in \mc A$ such that $1_{\mc A} = bac$, hence $1_{\mc A} \in \mc J$. Thus $\mc J = \mc A$.
	
	We now show that $\mc A$ is properly infinite. Recall that $\mc A$ is not a division algebra, hence we can find a non-zero, non-invertible element, say  $a \in \mc A$.  Let $b,c \in \mc A$ be such that $1_{\mc A} = bac$. We define $p:=cba$ and $r:=acb$, it is clear that $p,r \in \mc A$ are idempotents with $p \sim 1_{\mc A} \sim r$. However $p$ and $r$ need not be orthogonal. Nevertheless, either $p \neq 1_{\mc A}$ or $r \neq 1_{\mc A}$ (or both), otherwise $a$ were invertible with inverse $cb$ which is not possible. Without loss of generality we may assume $p \neq 1_{\mc A}$. Let $s:= 1_{\mc A}-p$, then $s \in \mc A$ is a non-zero idempotent with $s \perp p$. We can find some $x,y \in \mc A$ such that $1_{\mc A} = xsy$. Define $q:= syxs$, then $q^2 = syxssyxs=syxsyxs=syxs=q$, and clearly $q \sim 1_{\mc A}$. Now $p \perp q$ as $p \perp s$.
\end{proof}

\subsection{Ultrapowers of Banach algebras}

Let $\mc A$ be a Banach algebra and let $\ell^\infty(\mc A)$ be the Banach space of all bounded sequences $(a_n)$ in $\mc A$, turned into a Banach algebra with pointwise operations.  Let $\mc U$ be a non-principal ultrafilter on $\mathbb N$ and let $c_{\mc U}(\mc A)$ be the closed, two-sided ideal of $\ell^\infty(\mc A)$ formed of sequences $(a_n)$ with $\lim_{n\rightarrow\mc U} \|a_n\|=0$. The quotient
\begin{align}
(\mc A)_{\mc U} = \ell^\infty(\mc A) / c_{\mc U}(\mc A)
\end{align}
is the \emph{ultrapower}, see \cite{hein}. It is well known that $(\mathbb C)_{\mc U} \cong \mathbb{C}$.

We shall denote by a capital letter $A$, and so forth, an element $A=(a_n) \in \ell^\infty(\mc A)$.  Let $\pi_{\mc U, \mc A} \colon \ell^\infty(\mc A) \rightarrow(\mc A)_{\mc U}$ be the quotient map; then
\begin{align}
\|\pi_{\mc U, \mc A}(A)\| = \lim_{n\rightarrow\mc U} \|a_n\|.
\end{align}
In particular, given any $a\in (\mc A)_{\mc U}$ we can always find $A=(a_n)\in\ell^\infty(\mc A)$ with $\pi_{\mc U, \mc A}(A)=a$ and $\|A\|=\sup_n \|a_n\| = \|a\|$. We always assume that our ultrafilters are non-principal, which on a countable indexing set, is equivalent to being \emph{countably-incomplete} (see \cite[Section~1]{hein}). When it does not cause any confusion we may drop the subscripts on $\pi_{\mc U, \mc A}$.
	
Given a Banach algebra $\mc A$ and an ultrafilter $\mc U$, the ``diagonal'' map
\begin{align}
\iota_{\mathcal{A}} \colon \mathcal{A} \rightarrow (\mathcal{A})_{\mathcal{U}}; \quad a \mapsto \pi_{\mathcal{U}}((a))
\end{align}
is an isometric embedding. In particular, $\dim \mc A \leqslant \dim (\mathcal{A})_{\mathcal{U}}$ follows. \smallskip	

Let $\mc A$ and $\mc B$ be Banach algebras and let $\psi \colon \mc A \to \mc B$ be a continuous algebra homomorphism. Then for an ultrafilter $\mc U$ there is an induced continuous algebra homomorphism $\psi_{\mc U} \colon (\mc A)_{\mc U} \to (\mc B)_{\mc U}$ such that 
\begin{align}\label{e: ultra_induced_hom}
\psi_{\mc U} \left( \pi_{\mc U, \mc A} \left( (a_n) \right) \right) = \pi_{\mc U, \mc B} \left( \left( \psi(a_n) \right) \right)
\end{align}
for each $(a_n) \in \ell^\infty(\mc A)$. If $\psi$ is non-zero then $\psi_{\mc U}$ is non-zero too.

\section{Norm control}\label{sec:norm_control}

\subsection{Quantifying pure infiniteness}
In \cite{dh} we ``quantified'' Dedekind-finiteness, proper infiniteness and stable rank one, in order to characterise when an ultrapower $(\mc A)_{\mc U}$ has these ring-theoretic properties of the underlying Banach algebra $\mc A$. We follow our previous approach in the present paper.

\begin{definition}
	Let $\mc A$ be a unital Banach algebra.  For $a\in \mc A \setminus \{0\}$ define
	\[ C_{\text{pi}}^{\mc A}(a) = \inf \{ \|b\| \|c\| \colon b,c \in \mc A, \, bac=1 \} \]
	with $C_{\text{pi}}^{\mc A}(a) = \infty$ if there are no $b,c\in \mc A$ with $bac=1$.
\end{definition}

Then a unital Banach algebra $\mc A$ is purely infinite exactly when it is not isomorphic to $\mathbb{C}$ and $C_{\text{pi}}^{\mc A}(a) < \infty$ for each $a \in \mc A \setminus \{0 \}$. Note that if $a \in \mc A$ is such that $C_{\text{pi}}^{\mc A}(a) < \infty$ then $1/ \|a\| \leqslant C_{\text{pi}}^{\mc A}(a)$.

By homogeneity, we have
\begin{align}
C_{\text{pi}}^{\mc A}(za) = |z|^{-1} C_{\text{pi}}^{\mc A}(a) \qquad (a \in \mc A \setminus \{0\}, \; z \in \mathbb{C} \setminus \{0\}).
\end{align}
Thus it is enough to study the unit sphere of $\mc A$.

Usually, we will drop the superscript on $C_{\text{pi}}^{\mc A}(a)$ and simply write $C_{\text{pi}}(a)$, whenever it is clear from the context which Banach algebra the element $a$ is taken from.

\begin{proposition}\label{p: ultrapurelyinf}
	Let $\mc U$ be a countably-incomplete ultrafilter. Let $\mc A$ be a unital Banach algebra not isomorphic to $\mathbb{C}$. Then the following are equivalent.
	\begin{enumerate}
		\item[(1)] $(\mc A)_{\mc U}$ is purely infinite;
		\item[(2)] There is $K>0$ such that $C_{\text{pi}}(a) < K$ for each $a \in \mc A$ with $\|a\|=1$.
	\end{enumerate}
\end{proposition}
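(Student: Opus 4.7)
The plan is to prove $(2)\Rightarrow(1)$ by a direct lifting argument, and $(1)\Rightarrow(2)$ by contradiction, producing from a putative unbounded sequence of witnesses $a_n$ an element of the ultrapower that witnesses failure of pure infiniteness.

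For $(2)\Rightarrow(1)$, first note that since $\mc A\not\cong\mathbb{C}$ and $\iota_{\mc A}$ is an isometric embedding, $(\mc A)_{\mc U}\not\cong\mathbb{C}$. Now take any non-zero $a\in(\mc A)_{\mc U}$; by homogeneity we may assume $\|a\|=1$, and we lift to a representative $A=(a_n)$ with $\|A\|=1$. Because $\lim_{n\to\mc U}\|a_n\|=1$, the set $S=\{n:\|a_n\|>1/2\}$ lies in $\mc U$. For $n\in S$, hypothesis (2) applied to $a_n/\|a_n\|$ together with homogeneity gives $C_{\text{pi}}(a_n)<K/\|a_n\|<2K$, so we can choose $b_n,c_n\in\mc A$ with $b_na_nc_n=1$ and $\|b_n\|\|c_n\|<2K+1$. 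After a trivial rescaling $b_n\mapsto b_n/\lambda_n$, $c_n\mapsto\lambda_n c_n$ with $\lambda_n=\sqrt{\|b_n\|/\|c_n\|}$, we may assume both $\|b_n\|$ and $\|c_n\|$ are bounded by $\sqrt{2K+1}$. For $n\notin S$ set $b_n=c_n=0$. Then $B=(b_n),C=(c_n)\in\ell^\infty(\mc A)$, and because $BAC$ agrees with $(1_{\mc A})$ on the set $S\in\mc U$, we have $\pi_{\mc U}(B)\,a\,\pi_{\mc U}(C)=1_{(\mc A)_{\mc U}}$, as required.

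For $(1)\Rightarrow(2)$, suppose no such $K$ exists. For each $n$ pick $a_n\in\mc A$ with $\|a_n\|=1$ and $C_{\text{pi}}(a_n)>n$, and let $a=\pi_{\mc U}((a_n))\in(\mc A)_{\mc U}$, so $\|a\|=1$ and in particular $a\ne0$. By (1) there exist $b,c\in(\mc A)_{\mc U}$ with $bac=1$; lift to bounded sequences $B=(b_n),C=(c_n)$. Then $\pi_{\mc U}((b_na_nc_n))=bac=1$, so $\lim_{n\to\mc U}\|b_na_nc_n-1_{\mc A}\|=0$. Therefore the set $T=\{n:\|b_na_nc_n-1_{\mc A}\|<1/2\}$ lies in $\mc U$, and for $n\in T$ the element $b_na_nc_n$ is invertible with $\|(b_na_nc_n)^{-1}\|\le 2$ by a Neumann series estimate. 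Setting $d_n=(b_na_nc_n)^{-1}b_n$ gives $d_na_nc_n=1_{\mc A}$ with $\|d_n\|\|c_n\|\le 2\|B\|\|C\|$. Hence $C_{\text{pi}}(a_n)\le 2\|B\|\|C\|$ for all $n\in T$, contradicting $C_{\text{pi}}(a_n)>n$ as soon as $n>2\|B\|\|C\|$ (such $n\in T$ exist since $T\in\mc U$ and $\mc U$ is non-principal).

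The main obstacle I anticipate is the bookkeeping in $(2)\Rightarrow(1)$: one must be careful that the witnesses $b_n,c_n$ stay \emph{individually} bounded (not merely their product), and that they are defined on all of $\mathbb N$ so as to give elements of $\ell^\infty(\mc A)$; both are handled by the rescaling trick and by zeroing out off a suitable set in $\mc U$. In $(1)\Rightarrow(2)$ the only subtlety is converting an approximate factorisation $b_na_nc_n\approx 1$ into an exact one, which is done by absorbing the small perturbation into $b_n$ via a Neumann series.
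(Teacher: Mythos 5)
Your proposal is correct and follows essentially the same route as the paper: the $(1)\Rightarrow(2)$ direction uses the identical Neumann-series absorption to convert an approximate factorisation into an exact one, and the $(2)\Rightarrow(1)$ direction uses the same rescaling to balance $\|b_n\|$ and $\|c_n\|$ and the same zeroing-out off a set in $\mc U$ (the paper works with a general non-zero $a$ and a lower bound $\delta$ on $\|a_n\|$ rather than normalising $\|a\|=1$ first, but this is cosmetic).
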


\begin{proof}
	As usual, we may suppose that $\mc U$ is a non-principal ultrafilter on $\mathbb N$. 
	
	$((1) \Rightarrow (2)):$ We prove the statement by way of a contraposition. Assume $(2)$ does not hold. Then in particular we can pick a sequence $(a_n)$ in $\mc A$ consisting of norm one elements such that $C_{\text{pi}}(a_n) > n$ for each $n \in \mathbb{N}$. Let $A:= (a_n)$ so $A \in \ell^{\infty}(\mc A)$. Assume towards a contradiction that $(\mc A)_{\mc U}$ is purely infinite. Thus we can find $B=(b_n)$, $C=(c_n) \in \ell^{\infty}(\mc A)$ such that $\pi(1) = \pi(B) \pi(A) \pi(C)$, or equivalently, $\lim_{n \rightarrow \mc U} \| 1-b_n a_n c_n \| =0$. Let $\mc N := \{ n \in \mathbb{N} \colon \|1 - b_n a_n c_n \| < 1/2 \}$, then $\mc N \in \mc U$. By the Carl Neumann series $x_n :=b_n a_n c_n \in \inv(\mc A)$ with $\|x_n^{-1} \| \leqslant 2$ for each $n \in \mc N$. As $1=x_n^{-1}x_n =(x_n^{-1}b_n)a_n c_n$, we conclude that
	\begin{align}
	n< C_{\text{pi}}(a_n) \leqslant \|x_n^{-1} b_n \| \|c_n \| \leqslant \|x_n^{-1} \| \| b_n \| \|c_n \| \leqslant 2 \|B \| \|C \| \qquad (n \in \mc N). 
	\end{align}
	As $\mc N \in \mc U$ and thus $\mc N$ is infinite, this gives a contradiction.
	
	$((2) \Rightarrow (1)):$ Assume $(2)$ holds. Let $A=(a_n) \in \ell^{\infty}(\mc A)$ be such that $\pi(A) \neq 0$. This is equivalent to saying that $\lim_{n \rightarrow \mc U} \| a_n \| \neq 0$, hence there is $\delta >0$ such that $\{ n \in \mathbb{N} \colon \| a_n \| < \delta \} \notin \mc U$, that is, $\mc M := \{ n \in \mathbb{N} \colon \|a_n \| \geqslant \delta \} \in \mc U$. Thus we may set $a'_n := a_n / \|a_n \|$ whenever $n \in \mathcal{M}$, and $a'_n := 0$ otherwise. Clearly $\|a'_n\| =1$ for each $n \in \mathcal{M}$, hence by the assumption it follows that $C_{\text{pi}}(a'_n) < K$ for each $n \in \mc M$. Thus for every $n \in \mc M$ we can find $b'_n, c'_n \in \mc A$ such that $b'_n a'_n c'_n =1$ and $\|b'_n \| \|c'_n \| < K$. We set 
	\begin{align}
	b_n :=\begin{cases}
	\sqrt{\dfrac{\|c'_n \|}{\|b'_n \| \|a_n \|}} b'_n &\text{ if } n \in \mc M, \\
	0 &\text{ otherwise;}
	\end{cases} \quad \text{ and } \quad 
	c_n :=\begin{cases}
	\sqrt{\dfrac{\|b'_n \|}{\|c'_n \| \|a_n \|}} c'_n &\text{ if } n \in \mc M, \\
	0 &\text{ otherwise.}
	\end{cases}
	\end{align}
	Hence $b_n a_n c_n = \|a_n \|^{-1} b'_n a_n c'_n = b'_n a'_n c'_n =1$ for each $n \in \mc M$. It is also follows from the definitions that $\|b_n\| =\sqrt{\|b'_n \| \|c'_n \| / \|a_n\|} < \sqrt{K / \delta}$ and similarly $\|c_n\|  < \sqrt{K / \delta}$, hence $B:=(b_n)$, $C:=(c_n) \in \ell^{\infty}(\mc A)$. 
	
	Fix $\varepsilon >0$. Then
	\begin{align}
	\mc M = \{ n \in \mathbb{N} \colon 1=b_n a_n c_n \} \subseteq \{ n \in \mathbb{N} \colon \|1-b_n a_n c_n \| < \varepsilon \},
	\end{align}
	hence from $\mc M \in \mc U$ we conclude $\{ n \in \mathbb{N} \colon \|1-b_n a_n c_n \| < \varepsilon \} \in \mc U$. Thus $\lim_{n \rightarrow \mc U} \| 1- b_n a_n c_n \| =0$, which is equivalent to $\pi(B) \pi(A) \pi(C) = \pi(A)$. Lastly, $(\mc A)_{\mc U}$ cannot be isomorphic to $\mathbb{C}$ by the assumption and $\dim \mc A \leqslant \dim (\mathcal{A})_{\mathcal{U}}$. Thus $(\mc A)_{\mc U}$ is purely infinite.
\end{proof}

\begin{remark}
	In view of the comment before Proposition~\ref{p: ultrapurelyinf}, we may rewrite condition $(2)$ as
	\begin{enumerate}
		\item[(2')] There is $K>0$ such that $C_{\text{pi}}(a) \leqslant K / \|a\|$ for each non-zero $a \in \mc A$. 
	\end{enumerate}
	Consequently $(\mc A)_{\mc U}$ is purely infinite if and only if there exists a $K>0$ such that 
	\begin{align}
	1/ \|a\| \leqslant C_{\text{pi}}(a) \leqslant K / \|a\| \qquad (a \in \mc A \setminus \{0\}).
	\end{align}
\end{remark}

\begin{corollary}\label{c: ultra_pi_implies}
	Let $\mc U$ be a countably-incomplete ultrafilter, and let $\mc A$ be a Banach algebra such that $(\mc A)_{\mc U}$ is purely infinite. Then $\mc A$ is purely infinite.
\end{corollary}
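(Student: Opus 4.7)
The corollary is essentially a direct application of the $(1) \Rightarrow (2)$ direction of Proposition~\ref{p: ultrapurelyinf}. The first step is to rule out the degenerate case: if $\mc A \cong \mathbb C$ then $(\mc A)_{\mc U} \cong (\mathbb C)_{\mc U} \cong \mathbb C$, which is a division algebra and hence not purely infinite; so our hypothesis forces $\mc A \not\cong \mathbb C$, putting us in the setting where Proposition~\ref{p: ultrapurelyinf} applies.

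The second step is to invoke the implication $(1) \Rightarrow (2)$ of that proposition: since $(\mc A)_{\mc U}$ is purely infinite, there exists $K > 0$ such that $C_{\text{pi}}^{\mc A}(a) < K$ for every $a \in \mc A$ with $\|a\| = 1$. By homogeneity $C_{\text{pi}}^{\mc A}(za) = |z|^{-1} C_{\text{pi}}^{\mc A}(a)$, this yields $C_{\text{pi}}^{\mc A}(a) < \infty$ for every non-zero $a \in \mc A$; that is, for each non-zero $a \in \mc A$ there exist $b,c \in \mc A$ with $bac = 1$. Combined with $\mc A \not\cong \mathbb C$, this is exactly the definition of $\mc A$ being purely infinite.

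Conceptually, the only subtle point is that one cannot prove this corollary by the most naive route, namely pulling back a factorisation through the isometric diagonal embedding $\iota_{\mc A} \colon \mc A \to (\mc A)_{\mc U}$: given non-zero $a \in \mc A$, pure infiniteness of $(\mc A)_{\mc U}$ supplies $B, C \in (\mc A)_{\mc U}$ with $B \iota_{\mc A}(a) C = 1$, but in general these need not lie in the image of $\iota_{\mc A}$, and there is no obvious mechanism to replace them with elements of $\mc A$. The point of the quantified Proposition~\ref{p: ultrapurelyinf} is precisely that it converts pure infiniteness of the ultrapower into uniform control ($K$) on the original algebra, from which the conclusion is immediate. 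Thus the entire proof consists of citing the proposition, after disposing of the $\mc A \cong \mathbb C$ case.
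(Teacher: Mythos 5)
Your proposal is correct and follows essentially the same route as the paper: rule out $\mc A \cong \mathbb C$ via $(\mathbb C)_{\mc U} \cong \mathbb C$, then apply Proposition~\ref{p: ultrapurelyinf} to obtain the constant $K$ and deduce $C_{\text{pi}}(a) < \infty$ for all non-zero $a$. The only cosmetic difference is that the paper normalises $a' = a/\|a\|$ and explicitly rescales the factors $b', c'$, whereas you invoke the homogeneity identity; these are the same computation.
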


\begin{proof}
	Note that $\mc A$ is not isomorphic to $\mathbb{C}$, otherwise $(\mc A)_{\mc U} \cong  (\mathbb{C})_{\mc U} \cong \mathbb{C}$ which is nonsense. Hence by the assumption we can take some $K>0$ which satisfies the conditions of Proposition~\ref{p: ultrapurelyinf} (2). Let $a \in \mc A$ be non-zero. We set $a':= a / \|a\|$, then $C_{\text{pi}}(a') <K$. Thus there exist $b',c' \in \mc A$ such that $b'a'c'=1$. Now define $b:= b' / \sqrt{\|a \|}$ and $c:= c' / \sqrt{\|a\|}$, thus $bac=1$ as required.
\end{proof}

In fact, we can make a quantitative statement in this direction.
\begin{lemma}
	Let $\mathcal{A}$ be a unital Banach algebra. If $\mc U$ is an ultrafilter, then
	\begin{align}
	C^{(\mathcal{A})_{\mathcal{U}}}_{\text{pi}} \circ \iota_{\mathcal{A}} = C^{\mathcal{A}}_{\text{pi}}.
	\end{align}
\end{lemma}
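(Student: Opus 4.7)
The plan is to establish both inequalities separately, combining them into the stated equality, with the convention $\inf\emptyset = \infty$ so that both sides may take the value $+\infty$. Throughout fix $a \in \mathcal{A}\setminus\{0\}$, and note $\iota_{\mathcal{A}}(a) \neq 0$ since $\iota_{\mathcal{A}}$ is isometric.

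For the inequality $C^{(\mathcal{A})_{\mathcal{U}}}_{\text{pi}}(\iota_{\mathcal{A}}(a)) \leqslant C^{\mathcal{A}}_{\text{pi}}(a)$, I would use that $\iota_{\mathcal{A}}$ is a unital isometric algebra homomorphism (the constant sequence $(1_{\mathcal{A}})$ lifts the unit). Any factorisation $1_{\mathcal{A}} = bac$ with $b,c\in\mathcal{A}$ pushes forward to $1_{(\mathcal{A})_{\mathcal{U}}} = \iota_{\mathcal{A}}(b)\iota_{\mathcal{A}}(a)\iota_{\mathcal{A}}(c)$ with $\|\iota_{\mathcal{A}}(b)\|\|\iota_{\mathcal{A}}(c)\| = \|b\|\|c\|$; taking the infimum over all such $b,c$ gives the desired bound.

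For the reverse inequality, I would assume $C^{(\mathcal{A})_{\mathcal{U}}}_{\text{pi}}(\iota_{\mathcal{A}}(a)) < \infty$ (otherwise nothing to prove), fix $\varepsilon > 0$, and pick $B, C \in (\mathcal{A})_{\mathcal{U}}$ with $B\iota_{\mathcal{A}}(a)C = 1$ and $\|B\|\|C\| < C^{(\mathcal{A})_{\mathcal{U}}}_{\text{pi}}(\iota_{\mathcal{A}}(a)) + \varepsilon$. Using the remark after equation (1.2), I would lift $B = \pi_{\mc U}((b_n))$ and $C = \pi_{\mc U}((c_n))$ with $\sup_n \|b_n\| = \|B\|$ and $\sup_n \|c_n\| = \|C\|$. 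The relation $B\iota_{\mathcal{A}}(a)C = 1$ translates to $\lim_{n\to\mc U}\|1_{\mathcal{A}} - b_n a c_n\| = 0$, so for each $\delta \in (0,1)$ the set $\mc N_\delta := \{n \in \mathbb{N} : \|1_{\mathcal{A}} - b_n a c_n\| < \delta\}$ lies in $\mc U$ and is in particular non-empty. By a Neumann series argument identical to the one in the proof of Proposition~\ref{p: ultrapurelyinf}, for $n \in \mc N_\delta$ the element $x_n := b_n a c_n$ is invertible with $\|x_n^{-1}\| \leqslant (1-\delta)^{-1}$, so setting $b'_n := x_n^{-1} b_n$ yields $b'_n a c_n = 1_{\mathcal{A}}$ with $\|b'_n\| \leqslant (1-\delta)^{-1}\|b_n\|$. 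Hence
\[
C^{\mathcal{A}}_{\text{pi}}(a) \leqslant \|b'_n\|\|c_n\| \leqslant (1-\delta)^{-1}\|B\|\|C\|.
\]
Sending $\delta \to 0^+$ and then $\varepsilon \to 0^+$ gives $C^{\mathcal{A}}_{\text{pi}}(a) \leqslant C^{(\mathcal{A})_{\mathcal{U}}}_{\text{pi}}(\iota_{\mathcal{A}}(a))$.

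I expect no real obstacle here: the argument is essentially a quantitative refinement of the $(1)\Rightarrow(2)$ direction of Proposition~\ref{p: ultrapurelyinf}. The only points of care are to select lifts which realise the ultrapower norm exactly (so that no extra $\varepsilon$ appears), and to track the constant $(1-\delta)^{-1}$ so that the limit $\delta \to 0^+$ produces a sharp estimate.
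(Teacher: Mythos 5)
Your proposal is correct and follows essentially the same route as the paper: the easy direction via the unital isometric embedding of constant sequences, and the harder direction via a Neumann-series correction of $b_nac_n$ to an exact factorisation with inverse norm controlled by $(1-\delta)^{-1}$, followed by letting $\delta\to 0^+$. The only cosmetic difference is that the paper bounds $\|b_n\|\|c_n\|$ by taking the ultralimit $\lim_{n\to\mc U}\|b_n\|\|c_n\|=\|\pi_{\mc U}(B)\|\|\pi_{\mc U}(C)\|$ rather than choosing lifts realising the norm as a supremum; both are valid.
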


\begin{proof}
	\textit{$(\geqslant)$}: Let $a \in \mathcal{A}$, and put $A :=(a) \in \ell^{\infty}(\mathcal{A})$. Assume $B=(b_n)$, $C=(c_n) \in \ell^{\infty}(\mathcal{A})$ are such that $\pi_{\mathcal{U}}(1) = \pi_{\mathcal{U}}(B) \pi_{\mathcal{U}}(A) \pi_{\mathcal{U}}(C)$, which is equivalent to $\lim_{n \rightarrow \mathcal{U}} \| 1- b_nac_n \| = 0$. Let us fix $\varepsilon \in (0,1)$. Then
	\begin{align}
	\mathcal{N}_{\varepsilon} := \{ n \in \mathbb{N} \colon \| 1- b_nac_n \| < \varepsilon \} \in \mathcal{U},
	\end{align}
	and by the Carl Neumann series $x_n:=b_n a c_n \in \inv(\mathcal{A})$ with $\|x_n^{-1}\| \leqslant (1-\varepsilon)^{-1}$ for each $n \in \mathcal{N}_{\varepsilon}$. Thus $1 = x_n^{-1}x_n = (x_n^{-1}b_n)ac_n$, and consequently
	\begin{align}
	C^{\mathcal{A}}_{\text{pi}}(a) \leqslant \| x_n^{-1}b_n \| \|c_n \| \leqslant \| x_n^{-1} \| \|b_n \| \|c_n \| < (1-\varepsilon)^{-1} \|b_n \| \|c_n \| \quad (n \in \mathcal{N}_{\varepsilon}).
	\end{align}
	Therefore $C^{\mathcal{A}}_{\text{pi}}(a) \leqslant \lim_{n \rightarrow \mathcal{U}} \|b_n \| \|c_n \| (1- \varepsilon)^{-1} = \| \pi_{\mathcal{U}}(B) \| \| \pi_{\mathcal{U}}(C) \| (1- \varepsilon)^{-1}$, which holds for all $\varepsilon \in (0,1)$, hence $C^{\mathcal{A}}_{\text{pi}}(a) \leqslant \| \pi_{\mathcal{U}}(B) \| \| \pi_{\mathcal{U}}(C) \|$. Consequently $C^{\mathcal{A}}_{\text{pi}}(a) \leqslant C^{(\mathcal{A})_{\mathcal{U}}}_{\text{pi}}(\pi_{\mathcal{U}}(A))= C^{(\mathcal{A})_{\mathcal{U}}}_{\text{pi}}(\iota_{\mathcal{A}}(a))$, as claimed. \smallskip
	
	\textit{$(\leqslant)$}: Let $a \in \mathcal{A}$. Assume $b,c \in \mathcal{A}$ are such that $1=bac$. Putting $A:=(a), B:=(b), C:=(c) \in \ell^{\infty}(\mathcal{A})$, we clearly have $\pi_{\mathcal{U}}(1) =\pi_{\mathcal{U}}(B) \pi_{\mathcal{U}}(A) \pi_{\mathcal{U}}(C)$. Consequently
	\begin{align}
	C^{(\mathcal{A})_{\mathcal{U}}}_{\text{pi}}(\iota_{\mathcal{A}}(a)) = C^{(\mathcal{A})_{\mathcal{U}}}_{\text{pi}}(\pi_{\mathcal{U}}(A)) \leqslant \| \pi_{\mathcal{U}}(B) \| \| \pi_{\mathcal{U}}(C) \| = \|b \| \|c \|,
	\end{align}
	and therefore $C^{(\mathcal{A})_{\mathcal{U}}}_{\text{pi}}(\iota_{\mathcal{A}}(a)) \leqslant C^{\mathcal{A}}_{\text{pi}}(a)$, as required.
\end{proof}

One might wonder whether the converse to Corollary~\ref{c: ultra_pi_implies} could be true. We will show that this is not the case: there is a purely infinite Banach $*$-algebra which does not have purely infinite ultrapowers (see Theorems~\ref{t: quot_pi} and~\ref{t: ultra_not_pi} ). \smallskip

However, it is well known (see \cite[Section~3.13.7]{fhlrtvw}) that the converse to Corollary~\ref{c: ultra_pi_implies} remains true for $C^*$-algebras. Here we demonstrate how this can easily be deduced from Proposition~\ref{p: ultrapurelyinf}.

\begin{lemma}
	Let $\mc A$ be a purely infinite $C^*$-algebra. Then $C_{\text{pi}}(a) =1$ for each $a \in \mc A$ with $\|a\|=1$, consequently $(\mc A)_{\mc U}$ is purely infinite for every countably-incomplete ultrafilter $\mc U$.
\end{lemma}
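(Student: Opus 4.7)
The plan is to establish $C_{\text{pi}}(a) = 1$ for each norm-one $a \in \mc A$; the statement about ultrapowers will then follow immediately from Proposition~\ref{p: ultrapurelyinf}, applied with, say, $K = 2$.

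The lower bound $C_{\text{pi}}(a) \geqslant 1$ is immediate from the observation recorded just after the definition of $C_{\text{pi}}$: we have $1/\|a\| \leqslant C_{\text{pi}}(a)$, and $\|a\| = 1$.

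For the upper bound I would pass to the positive element $h := a^* a$, for which $\|h\| = \|a\|^2 = 1$. Given $\delta > 0$, the aim is to exhibit $x \in \mc A$ with $x^* h x = 1$ and $\|x\|^2 \leqslant 1 + \delta$. Granted such an $x$, setting $b := x^* a^*$ and $c := x$ gives
\[
b a c = x^* a^* a x = x^* h x = 1,
\]
while the $C^*$-identity yields $\|b\|^2 = \|b b^*\| = \|x^* a^* a x\| = 1$. Hence $\|b\| \|c\| = \|x\| \leqslant \sqrt{1 + \delta}$. As $\delta > 0$ is arbitrary this gives $C_{\text{pi}}(a) \leqslant 1$, and combined with the lower bound $C_{\text{pi}}(a) = 1$.

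The main obstacle is producing $x$ with this tight norm bound. This is where the $C^*$-structure has to be used in earnest: by Lemma~\ref{l: pui}(1) the algebra $\mc A$ is simple, hence a purely infinite simple $C^*$-algebra in the classical sense, and the existence of such an $x$ is a standard consequence of the structure theory. Concretely, one cuts down to $h_\delta := (h - (1 - \delta))_+$, which is a non-zero positive element in the purely infinite simple $C^*$-algebra $\mc A$ and is therefore Cuntz-equivalent to $1_{\mc A}$; the hereditary subalgebra generated by $h_\delta$ contains a projection Murray--von Neumann equivalent to $1_{\mc A}$ in $\mc A$, and pulling this equivalence back through continuous functional calculus on $h$ produces the desired $x$ with norm control $\|x\|^2 \leqslant 1 + \delta$. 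See \cite{kr} and the references therein for the relevant technology.
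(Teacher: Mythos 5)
Your argument is correct and is essentially the paper's proof: both establish the lower bound from the trivial estimate $1/\|a\|\leqslant C_{\text{pi}}(a)$ and the upper bound by passing to the positive element $a^*a$ and invoking the standard fact that in a (simple) purely infinite $C^*$-algebra there is $x$ with $x^*(a^*a)x=1$ and $\|x\|$ arbitrarily close to $\|a^*a\|^{-1/2}=1$, then applying Proposition~\ref{p: ultrapurelyinf}. The only differences are cosmetic: the paper simply cites \cite[Theorem~V.5.5]{dav} for the existence of $x$ where you sketch its functional-calculus proof, and your use of the $C^*$-identity to get $\|x^*a^*\|=1$ exactly gives a marginally sharper (but equally sufficient) bound than the paper's $\|xa^*\|\|x^*\|\leqslant\|x\|^2\|a\|<(1+\varepsilon)^2$.
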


\begin{proof}
	Let $a \in \mc A$ have norm one. Let us fix $\varepsilon >0$. Clearly $a^*a \in \mc A$ is positive, hence by \cite[Theorem~V.5.5]{dav} there is some $x \in \mc A$ such that $(xa^*)ax^*=x(a^*a)x^* =1$ and $\|x\|< \|a^*a\|^{-1/2} + \varepsilon = 1 + \varepsilon$. Thus
	\begin{align}
	C_{\text{pi}}(a) \leqslant \|xa^*\| \|x^*\| \leqslant \|x\|^2 \|a\| <(1 + \varepsilon)^2,
	\end{align}
	and therefore $C_{\text{pi}}(a) \leqslant 1$. The ``consequently'' part follows from Proposition~\ref{p: ultrapurelyinf}.
\end{proof}

We note that \cite[Theorem~V.5.5]{dav} has an elementary (functional calculus) proof, passing by way of an equivalent definition of what purely infinite means for $C^*$-algebras, compare our discussion in Section~\ref{sec:pi}.
\medskip

We briefly consider the asymptotic sequence algebra. Let $c_0(\mc A)$ be the closed, two-sided ideal of $\ell^\infty(\mc A)$ which consists of sequences $(a_n)$ with $\lim_{n \to \infty} \|a_n\|=0$.  In fact, when $\mc A$ is unital, $\ell^\infty(\mc A)$ is the \emph{multiplier algebra} of $c_0(\mc A)$ (compare \cite[Section~13]{far} for example). The \emph{asymptotic sequence algebra}
$\asy(\mc A)$ is the quotient algebra $\ell^\infty(\mc A) / c_0(\mc A)$.

As opposed to the previously studied properties in \cite{dh} such as stable rank one, Dedekind-finitess and proper infiniteness, the theory for the asymptotic sequence algebra and the ultrapower of a Banach algebra seems to bifurcate here.

\begin{remark}\label{r: asy_not_pi}
	Let $\mc A$ be a non-zero unital Banach algebra. Then $\asy(\mc A)$ is not simple and hence not purely infinite.
\end{remark}

\begin{proof}
	Note that $\asy (\mc A)$ is simple if and only if $c_0(\mc A)$ is a maximal two-sided ideal in $\ell^{\infty}(\mc A)$. But this latter is not possible, as for example, the following shows. Let $\mc U$ be a non-principal ultrafilter on $\mathbb{N}$ such that $2 \mathbb{N} \in \mc U$. Let $A:=(a_n)$ be a sequence in $\mc A$ defined by $a_{2n}:=1_{\mc A}$ and $a_{2n-1}:= 0_{\mc A}$ for each $n \in \mathbb N$. Clearly $A \in \ell^{\infty}(\mc A)$ and in fact $A \in c_{\mc U}(\mc A)$ by definition. On the other hand clearly $A \notin c_0(\mc A)$. Consequently $c_0(\mc A) \subsetneq c_{\mc U}(\mc A)$ which shows that $c_0(\mc A)$ cannot be maximal. The last part follows from Lemma~\ref{l: pui}.
\end{proof}

We finish this section with a handy tool for showing when Banach algebras fail to have simple ultrapowers.  Indeed, this is one of the key ideas in the proof of Theorem~\ref{t: ultra_not_pi}.

	\begin{proposition}\label{p: pi_implies_bdd_below}
		Let $\mc A$ be a Banach algebra such that $(\mc A)_{\mc U}$ is simple for some countably-incomplete ultrafilter $\mc U$. Then for every Banach algebra $\mc B$, every non-zero continuous algebra homomorphism $\psi \colon \mc A \to \mc B$ is bounded below.
	\end{proposition}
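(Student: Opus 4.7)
The plan is to argue by contraposition: assume $\psi \colon \mc A \to \mc B$ is a non-zero continuous algebra homomorphism that fails to be bounded below, and manufacture from $\psi$ a non-trivial two-sided ideal in $(\mc A)_{\mc U}$, contradicting simplicity. Concretely, if $\psi$ is not bounded below, then for each $n \in \mathbb N$ I can choose $a_n \in \mc A$ with $\|a_n\|=1$ and $\|\psi(a_n)\| < 1/n$.

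With $A := (a_n) \in \ell^\infty(\mc A)$, the norm formula $\|\pi_{\mc U,\mc A}(A)\| = \lim_{n \to \mc U} \|a_n\| = 1$ shows that $\pi_{\mc U,\mc A}(A) \neq 0$. On the other hand, using the induced homomorphism $\psi_{\mc U}$ from \eqref{e: ultra_induced_hom},
\[
\psi_{\mc U}\bigl(\pi_{\mc U,\mc A}(A)\bigr) = \pi_{\mc U,\mc B}\bigl((\psi(a_n))\bigr),
\]
and since $\|\psi(a_n)\| \to 0$ in the ordinary sense, the ultralimit along any non-principal $\mc U$ also vanishes. Hence this element lies in the kernel of $\psi_{\mc U}$, so $\ker \psi_{\mc U}$ is a non-zero (closed) two-sided ideal in $(\mc A)_{\mc U}$.

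Simplicity of $(\mc A)_{\mc U}$ then forces $\ker \psi_{\mc U} = (\mc A)_{\mc U}$, i.e.\ $\psi_{\mc U} = 0$. This contradicts the remark recorded after \eqref{e: ultra_induced_hom} that a non-zero $\psi$ yields a non-zero $\psi_{\mc U}$, completing the proof.

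There is no real obstacle here: the only point worth being careful about is that the countable incompleteness of $\mc U$ (equivalently, non-principality on $\mathbb N$) is exactly what ensures that an ordinary sequential limit $\|\psi(a_n)\| \to 0$ translates into $\lim_{n\to\mc U}\|\psi(a_n)\|=0$, and that the witness $A$ built from a pathological sequence of unit vectors in $\mc A$ survives to a non-zero element of the ultrapower. Both facts are immediate from the norm formula for $(\mc A)_{\mc U}$.
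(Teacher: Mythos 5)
Your proposal is correct and follows essentially the same route as the paper: contraposition, a unit-norm sequence $(a_n)$ with $\|\psi(a_n)\|\to 0$, and the observation that $\Ker(\psi_{\mc U})$ is then a non-zero proper ideal of $(\mc A)_{\mc U}$. The only cosmetic difference is that you phrase the endgame as a contradiction (simplicity would force $\psi_{\mc U}=0$) where the paper directly concludes non-simplicity from the kernel being both non-zero and proper.
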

	
	\begin{proof}
		We prove by contraposition. Suppose $\mc B$ is a Banach algebra and $\psi \colon \mc A \to \mc B$ is a non-zero continuous algebra homomorphism which is not bounded below. Thus we can pick a sequence $(a_n)$ in $\mc A$ consisting of norm one elements with $\lim_{n \to \infty}\| \psi(a_n)\| =0$. In particular $A:= (a_n)\in \ell^{\infty}(\mc A)$ and $(\psi(a_n)) \in c_{\mathcal{U}}(\mathcal{B})$, where $\mathcal{U}$ is a countably-incomplete ultrafilter. Consider the induced continuous algebra homomorphism $\psi_{\mc U} \colon (\mc A)_{\mc U} \to (\mc B)_{\mc U}$. On the one hand, from \eqref{e: ultra_induced_hom} and the above we see that $\psi_{\mc U}(\pi_{\mc U, \mc A}(A)) = \pi_{\mc U, \mc B}((\psi(a_n)))=0$, hence $\pi_{\mc U, \mc A}(A) \in \Ker(\psi_{\mc U})$. As $\| \pi_{\mc U, \mc A}(A) \| = \lim_{n \to \mc U}\| a_n \| =1$, it follows that $\Ker(\psi_{\mc U})$ is a non-zero ideal in $(\mc A)_{\mc U}$. On the other hand, $\psi$ and hence $\psi_{\mc U}$ is non-zero, therefore $\Ker(\psi_{\mc U})$ is also a proper ideal in $(\mc A)_{\mc U}$. So $(\mc A)_{\mc U}$ is not simple.
	\end{proof}

\subsection{Examples of Banach algebras with purely infinite ultrapowers}\label{sec:eg_pi_ba}

It is a good point to give an example of a class of non-$C^*$, Banach algebras with purely infinite ultrapowers. In what follows, $\mc B (X)$ and $\mc K (X)$ denote the algebra of bounded linear operators on a Banach space $X$ and the set of compact operators on $X$, respectively. Clearly $\mc B (X)$ is a unital Banach algebra and $\mc K (X)$ is a closed, two-sided ideal in $\mc B (X)$.

\begin{proposition}\label{p: lpc0}
	Let $X$ be $c_0$ or $\ell^p$, where $1 \leqslant p < \infty$. Then $(\mathcal{B}(X)/ \mc K (X))_{\mathcal{U}}$ is purely infinite if $\mc U$ is a countably-incomplete ultrafilter. More precisely, for every $a \in \mathcal{B}(X)/ \mc K (X)$ with $\|a\|=1$ there exist $b,c \in \mathcal{B}(X)/ \mc K (X)$ such that $1=bac$ and $\|b\| \|c\|=1$.
\end{proposition}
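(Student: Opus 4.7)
The plan is to construct, for any lift $T \in \mc B(X)$ of $a$, operators $U, V \in \mc B(X)$ each of norm $1$ such that $VTU - I \in \mc K(X)$. Setting $b := V + \mc K(X)$ and $c := U + \mc K(X)$ then gives $bac = 1$ in the Calkin algebra, and since $\|b\|, \|c\| \leqslant 1$ while the general bound $\|b\|\|c\| \geqslant 1/\|a\| = 1$ always holds, this forces $\|b\|\|c\|=1$. The construction is a gliding-hump argument on the canonical basis $(e_n)$ of $X$, with $P_N$ the associated norm-one coordinate projection onto $\spanning(e_1,\ldots,e_N)$.

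A preparatory fact is the essential-norm identity $\|T+\mc K(X)\|=\lim_N\|T(I-P_N)\|$, which follows from $TP_N$ being finite-rank together with $\|K(I-P_N)\|\to 0$ for every compact $K$ (a consequence of the approximation property of $X$). Since this limit equals $1$, I would inductively pick integers $0 = N_1 < N_2 < \cdots$ and finitely-supported unit vectors $x_n$ with $\support(x_n) \subseteq (N_n, N_{n+1}]$ satisfying $\|Tx_n\| > 1 - 2^{-n}$, $\|P_{N_n}Tx_n\| < 2^{-n}$, and $\|(I - P_{N_{n+1}})Tx_n\| < 2^{-n}$. These three conditions are arranged respectively by the essential-norm formula, by using that $P_{N_n}T$ is finite-rank hence compact (so $\|P_{N_n}T(I - P_M)\|\to 0$ as $M\to\infty$), and by choosing $N_{n+1}$ large after $x_n$ is fixed. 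Setting $\tilde y_n := (P_{N_{n+1}}-P_{N_n})Tx_n$, $\alpha_n := \|\tilde y_n\|$, and $y_n := \tilde y_n/\alpha_n$, the sequences $(x_n)$ and $(y_n)$ are normalized block basic sequences with disjoint supports, $\|Tx_n - \tilde y_n\| \leqslant 2^{1-n}$, and $\alpha_n \to 1$.

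Define $U \in \mc B(X)$ by $U(e_n) = x_n$; because the $x_n$ are disjointly supported unit vectors in $\ell^p$ or $c_0$, this extends to an isometry, so $\|U\|=1$. For each $n$, let $\phi_n \in X^*$ be a norm-one functional supported in $\support(y_n)$ with $\phi_n(y_n) = 1$ (in $\ell^p$ take the normalized $\ell^{p'}$-dual of $y_n$; in $c_0$ take a signed coordinate functional at an index where $|y_n|$ attains its maximum). Define $V \in \mc B(X)$ by $V(x) := \sum_n \phi_n(x) e_n$. Disjointness of the supports of the $\phi_n$ yields $\|V\| = 1$, and by construction $V(y_n) = e_n$, so $VTU(e_n) = \alpha_n e_n + r_n$ with $r_n := V(Tx_n - \tilde y_n)$ satisfying $\|r_n\| \leqslant 2^{1-n}$.

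The main obstacle is verifying that $K := VTU - I$ is compact. I would show $\|K(I - P_N)\| \to 0$, so that $K$ is the norm-limit of the finite-rank operators $KP_N$. The diagonal contribution $x \mapsto \sum c_n(\alpha_n-1)e_n$ is bounded by $\sup_{n > N}|\alpha_n - 1| \to 0$; the off-diagonal contribution $x \mapsto \sum c_n r_n$ is controlled by H\"older's inequality in $\ell^p$ for $1 < p < \infty$ (using that $\sum \|r_n\|^{p'} < \infty$) and by the bound $\sum_{n > N} \|r_n\| \to 0$ in $c_0$ and $\ell^1$. Once $K$ is compact, the identity $VTU = I + K$ descends to $(V+\mc K)(T+\mc K)(U+\mc K) = 1$ in the Calkin algebra, as required.
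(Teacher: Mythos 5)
Your proposal takes a genuinely different route from the paper: the paper simply invokes Ware's Lemma~\ref{l: ware} (from \cite[Lemma~3.3.6]{ware}) as a black box and then applies Proposition~\ref{p: ultrapurelyinf}, whereas you reprove the factorisation from scratch by a gliding-hump/block-basis argument. For $X=c_0$ and $X=\ell^p$ with $1<p<\infty$ your argument is essentially correct and self-contained, and everything after the selection of the $x_n$ (the definitions of $U$, $V$, the estimate $\|r_n\|\leqslant 2^{1-n}$, the compactness of $VTU-I$ via $\|K(I-P_N)\|\to 0$, and the norm bookkeeping $\|b\|\|c\|=1$) is fine. You should also add one line at the end invoking Proposition~\ref{p: ultrapurelyinf} to pass from $C_{\text{pi}}(a)=1$ to pure infiniteness of the ultrapower, which is how the paper concludes.

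However, there is a genuine gap at $p=1$, which is included in the statement. Your ``preparatory fact'' $\|T+\mc K(X)\|=\lim_N\|T(I-P_N)\|$, and the auxiliary claim $\|K(I-P_N)\|\to 0$ for every compact $K$, are both \emph{false} on $\ell^1$: for the rank-one operator $S\colon x\mapsto\bigl(\sum_n x_n\bigr)e_1$ one has $\|S(I-P_N)\|=1$ for every $N$ while $\|S+\mc K(\ell^1)\|=0$. (The correct identity on $\ell^1$ is the left-handed one, $\|T+\mc K\|=\lim_N\|(I-P_N)T\|$, since $P_N\to I$ strongly on $\ell^1$ but $P_N^*\not\to I$ strongly on $\ell^\infty$.) This breaks your construction in two places. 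First, the condition $\|P_{N_n}Tx_n\|<2^{-n}$ is \emph{not} automatic from $\support(x_n)$ being far out, because $\|P_{N_n}T(I-P_M)\|$ need not tend to $0$; e.g.\ for $T=R+10S$ on $\ell^1$ ($R$ the right shift) one has $\|P_1T(I-P_M)\|\geqslant 10$ for all $M$. Second, you need $\alpha_n\to 1$ from above as well as below, and your only available upper bound is $\|Tx_n\|\leqslant\|T(I-P_{N_n})\|$, which on $\ell^1$ need not converge to $\|T+\mc K\|$. The statement is still true for $\ell^1$ (it is covered by Ware's lemma), but the unit vectors $x_n$ there must be chosen to satisfy $\|Tx_n\|\to 1$ and $\|P_{N_n}Tx_n\|\to 0$ \emph{simultaneously}, and this requires a genuinely different selection argument (for instance, exploiting cancellation by taking suitable combinations of disjointly supported candidates) rather than the ``support far enough out suffices'' reasoning you give. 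As written, your proof establishes the proposition only for $c_0$ and $\ell^p$ with $1<p<\infty$.
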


The proof of Proposition \ref{p: lpc0} relies on the following result of Ware, see \cite[Lemma~3.3.6]{ware}. Note that it is a strict strengthening of \cite[Lemma~2.1]{tylli}; the proof works by extracting a suitable
block basic sequence equivalent to the standard unit vector basis for $X$.

\begin{lemma}\label{l: ware}
	Let $X$ be $c_0$ or $\ell^p$, where $1 \leqslant p < \infty$. Then for each $A \in \mc B (X)$ a non-compact operator, there exist $B,C \in \mc B (X)$ such that
	\begin{align}
	I_X = BAC \quad \text{and} \quad \| \pi(B)\| \|\pi(C) \| = 1/ \|\pi(A)\|,
	\end{align}
	where $\pi \colon \mc B (X) \to \mc B (X) / \mc K (X)$ the quotient map.
\end{lemma}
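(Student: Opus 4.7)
The easy half comes for free: since $X$ is infinite-dimensional, $\mc B(X)/\mc K(X)$ is unital with $\|\pi(I_X)\|=1$, and so any $B,C\in\mc B(X)$ with $BAC=I_X$ automatically satisfy $\|\pi(B)\|\|\pi(C)\|\geqslant 1/\|\pi(A)\|$ via $\pi(B)\pi(A)\pi(C)=\pi(I_X)$. The content of the lemma is therefore to construct $B,C$ that attain this lower bound.

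The plan is to exploit two features that distinguish $X\in\{c_0,\ell^p\}$ from general Banach spaces: (i) every normalized block basic sequence of the unit vector basis $(e_n)$ is isometrically equivalent to $(e_n)$; and (ii) the closed linear span of such a block basic sequence is $1$-complemented in $X$ via the natural coordinate-supported projection. Together with these, the essential norm has the tail-subspace formula $\|\pi(A)\|=\lim_{m\to\infty}\|A|_{E_m}\|$, where $E_m:=\overline{\spanning}(e_j:j\geqslant m)$, so in particular $\|A|_{E_m}\|\geqslant\alpha:=\|\pi(A)\|$ for every $m$. From this a standard Bessaga--Pe{\l}czy\'nski sliding-hump extraction produces a normalized block basic sequence $(u_n)$ in $X$ such that $(Au_n)$ is, up to arbitrarily small perturbation, itself a block basic sequence $(w_n)$ of $(e_n)$ with $\|w_n\|\to\alpha$, and — with a little care in choosing representatives in tails of $E_m$ — with $\inf_n\|w_n\|\geqslant\alpha$.

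With such $(u_n)$ and $(w_n)$ in hand, the construction writes itself. I define $C\in\mc B(X)$ to be the isometric extension of $e_n\mapsto u_n$, so $\|C\|=1$; let $P\in\mc B(X)$ be the natural norm-one projection onto $\overline{\spanning}(w_n)$; and define $B_0\colon\overline{\spanning}(w_n)\to X$ by $w_n\mapsto e_n$, so that by feature (i) applied to the normalized basis $(w_n/\|w_n\|)$ we have $\|B_0\|=1/\inf_n\|w_n\|\leqslant 1/\alpha$. Setting $B:=B_0 P$, a direct check on basis vectors gives $BACe_n=e_n$ (up to the perturbation error). Once the exact identity $BAC=I_X$ is secured, one reads off $\|\pi(B)\|\|\pi(C)\|\leqslant\|B\|\|C\|\leqslant 1/\alpha$, which combined with the lower bound above yields the required equality.

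The principal obstacle is handling the perturbation: $(Au_n)$ is only close to, not equal to, the block basic sequence $(w_n)$, so the candidate $B,C$ give $BAC=I_X+K$ for some small-norm operator $K$ rather than exactly $I_X$. I would deal with this by passing to a very sparse subsequence making $\sum_n\|Au_n-w_n\|$ as small as desired, so that $K$ is compact and $\|K\|<1$; a Neumann-series correction then replaces $B$ by $(I_X+K)^{-1}B$, securing $BAC=I_X$ exactly, and since $(I_X+K)^{-1}-I_X$ is compact, the essential norms $\|\pi(B)\|,\|\pi(C)\|$ are unaffected by the correction. A secondary technicality is arranging $\inf_n\|w_n\|\geqslant\alpha$ in the degenerate case where $\|A|_{E_m}\|$ already equals $\alpha$ for all large $m$; this is handled by invoking attainment of the supremum via weak compactness when $\ell^p$ is reflexive, and by a small rescaling/limiting argument in the non-reflexive cases $\ell^1$ and $c_0$.
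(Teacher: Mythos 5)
First, note what you are being compared against: the paper does not prove Lemma~\ref{l: ware} at all --- it quotes it from Ware's thesis \cite[Lemma~3.3.6]{ware}, recording only that ``the proof works by extracting a suitable block basic sequence equivalent to the standard unit vector basis''. Your outline is exactly that strategy, and for $X=c_0$ or $\ell^p$ with $1<p<\infty$ it is essentially sound: there normalized blocks are weakly null, the gliding-hump extraction of $(Au_n)$ works, the compact Neumann correction $B\mapsto (I_X+K)^{-1}B$ is the right way to pass from $BAC=I_X+K$ to an exact identity without disturbing essential norms, and your lower-bound observation is correct. Two repairs, though, even in these cases. Your ``degenerate case'' fix via weak compactness is false: operator norms on tail subspaces need not be attained even on reflexive $\ell^p$. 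But it is also unnecessary, because exact attainment $\inf_n\|w_n\|\geqslant\alpha$ is never needed: choosing $\|Au_n\|>\alpha-1/n$ gives $\liminf_n\|w_n\|\geqslant\alpha$, and since $\|\pi(B)\|\leqslant\|B(I_X-P_m)\|$ for every basis projection $P_m$ (as $BP_m$ is finite rank), the essential norm of your $B$ is controlled by $1/\liminf_n\|w_n\|\leqslant 1/\alpha$, ignoring any finite initial segment of blocks. So you should estimate $\|\pi(B)\|$ directly rather than via $\|B\|$.

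The genuine gap is the case $X=\ell^1$, where two of your ingredients fail. The tail-subspace formula $\|\pi(A)\|=\lim_{m\to\infty}\|A|_{E_m}\|$ is false on $\ell^1$: the rank-one operator $x\mapsto\big(\sum_n x_n\big)y$ is compact, so has zero essential norm, yet its restriction to every tail $E_m$ has norm $\|y\|$. (Only the inequality $\|A|_{E_m}\|\geqslant\|\pi(A)\|$ survives, which is the direction you use --- but then the excess bites from the other side: maximizing $\|Au_n\|$ over tails can pick up contributions of a compact summand, giving putative blocks of norm strictly larger than $\alpha$, which would contradict your own lower bound; what actually happens is that the extraction fails.) More fundamentally, the Bessaga--Pe{\l}czy\'nski sliding hump you invoke needs $(Au_n)$ to be weakly (or at least coordinatewise) null after a subsequence; on $\ell^1$, by the Schur property normalized blocks $u_n$ are not weakly null, and $(Au_n)$ can converge coordinatewise to a nonzero vector $y$ (coming from a compact piece such as $\mathbf{1}\otimes y$) that cannot be absorbed into a block basic sequence, while subtracting it can destroy the lower bound on $\|w_n\|$. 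A correct $\ell^1$ argument has to switch to the left-handed approximation valid there, $\|\pi(A)\|=\lim_m\|(I_X-P_m)A\|=\lim_m\sup_j\|(I_X-P_m)Ae_j\|$, extract columns $Ae_{j_n}$ whose far tails are near-blocks of norm at least $\alpha-\delta_n$ sitting in windows where the coordinatewise limit $y$ is small, and observe that $y$ then contributes only a small-norm rank-one (hence compact) error, removable by your Neumann trick. As written, your uniform treatment of all three cases does not cover $\ell^1$.
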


\begin{proof}[Proof of Proposition~\ref{p: lpc0}]
	Let $A \in \mc B (X)$ be such that $\| \pi (A) \| =1$. Hence by Lemma~\ref{l: ware} there are $B,C \in \mc B (X)$ such that $I_X = BAC$ and $\| \pi(B)\| \|\pi(C) \| = 1$. This obviously proves the first part of the claim. In particular, $C_{\text{pi}}(\pi(A)) = 1$ follows whenever $\| \pi (A) \| =1$. Now Proposition~\ref{p: ultrapurelyinf} yields that $(\mc B (X) / \mc K (X))_{\mc U}$ is purely infinite, whenever $\mc U$ is a countably-incomplete ultrafilter.
\end{proof}
\medskip 

Let us introduce some terminology, commonly found in the literature, for the property we have been studying.
For a unital algebra $\mc A$, and given $a \in \mc A$, we say that \emph{$1_{\mc A}$ factors through $a$}, or \emph{$a$ is a purely infinite element}, if there exist $b,c \in \mc A$ such that $1_{\mc A} = bac$.

In a unital algebra $\mc A$ we define the set
\begin{align}
\mc M_{\mc A}:= \lbrace a \in \mc A \colon 1_{\mc A} \text{ does not factor through } a \rbrace.
\end{align}
The following result is folklore and easy to see; we omit the proof.

\begin{proposition}\label{p: uniquemax}
	Let $\mc A$ be a unital algebra.
	\begin{itemize}
		\item The set $\mc M_{\mc A}$ is closed under scalar multiplication, and under multiplying elements of it from the left and right by elements from $\mc A$. Thus it is the largest proper (and therefore unique maximal) two-sided ideal in $\mc A$ if and only if $\mc M_{\mc A}$ is closed under addition.
		\item If $\mc M_{\mc A}$ is closed under addition and $\mc A / \mc M_{\mc A}$ is not a division algebra, then $\mc A / \mc M_{\mc A}$ is purely infinite.
	\end{itemize}
\end{proposition}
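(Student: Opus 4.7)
The plan is to unpack the definition of $\mc M_{\mc A}$ directly: $a \in \mc M_{\mc A}$ precisely when no $b,c \in \mc A$ satisfy $1_{\mc A} = bac$. All assertions then reduce to formal manipulation of this equation.

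First I would check the two closure properties asserted in the opening sentence. For scalar multiplication: if $z = 0$ then $za = 0 \in \mc M_{\mc A}$ (since $1_{\mc A} \neq 0$ in a non-zero unital algebra), while for $z \neq 0$ a putative factorisation $1_{\mc A} = b(za)c$ rewrites as $1_{\mc A} = (zb)ac$, contradicting $a \in \mc M_{\mc A}$. For left/right absorption: if $a \in \mc M_{\mc A}$ and $x,y \in \mc A$, any $1_{\mc A} = b(xay)c$ rewrites as $1_{\mc A} = (bx)a(yc)$, again a contradiction.

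For the ``if and only if'' claim I would argue as follows. A subset of $\mc A$ that is closed under addition, scalar multiplication, and two-sided multiplication by $\mc A$ is by definition a two-sided ideal, so the additive closure hypothesis is exactly what is needed to promote $\mc M_{\mc A}$ to an ideal. Properness is automatic because $1_{\mc A} = 1_{\mc A} \cdot 1_{\mc A} \cdot 1_{\mc A}$ shows $1_{\mc A} \notin \mc M_{\mc A}$. For maximality, given any proper two-sided ideal $\mc J$ of $\mc A$ we have $1_{\mc A} \notin \mc J$, so for each $a \in \mc J$ and all $b,c \in \mc A$ the element $bac$ lies in $\mc J$ and hence differs from $1_{\mc A}$, placing $a$ in $\mc M_{\mc A}$; this yields $\mc J \subseteq \mc M_{\mc A}$. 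The converse is immediate since every ideal is additively closed.

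For the second bullet, assume $\mc M_{\mc A}$ is additively closed and $\mc A / \mc M_{\mc A}$ is not a division algebra. Given a non-zero $\bar a \in \mc A / \mc M_{\mc A}$, any lift $a \in \mc A$ lies outside $\mc M_{\mc A}$, so by definition there exist $b,c \in \mc A$ with $1_{\mc A} = bac$; applying the quotient map produces $\bar 1 = \bar b \, \bar a \, \bar c$ in $\mc A / \mc M_{\mc A}$. Combined with the hypothesis that the quotient is not a division algebra, this is exactly the definition of being purely infinite.

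There is no genuine obstacle here; the whole statement is a routine exercise in unwinding the definitions, which is presumably why the authors label it folklore. The only points requiring a moment of care are verifying properness via $1_{\mc A} \notin \mc M_{\mc A}$ and noting that the factorisation property of an element is automatically preserved under passage to a quotient.
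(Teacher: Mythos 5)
Your proof is correct; the paper explicitly omits its own proof of this proposition, labelling it folklore, and your argument is precisely the routine unwinding of the definition of $\mc M_{\mc A}$ that the authors had in mind. All steps check out, including the key observations that $1_{\mc A}\notin\mc M_{\mc A}$ gives properness, that any proper two-sided ideal is contained in $\mc M_{\mc A}$, and that the factorisation $1_{\mc A}=bac$ passes to the quotient.
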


Note that in the second item the condition that $\mc A / \mc M_{\mc A}$ is not a division algebra cannot be omitted. Indeed, Kania and Laustsen showed in \cite[Theorem~1.2]{kl1} that with $X:= C[0, \omega_1]$, the one-codimensional \textit{Loy--Willis ideal} coincides with $\mc M_{\mc B(X)}$ and hence $\mc B (X) / \mc M_{\mc B(X)} \cong \mathbb{C}$. \smallskip

When the unital Banach algebra $\mc A$ is $\mc B (X)$ for some ``classical'' Banach space $X$, it happens very often that $\mc M_{\mc B (X)}$ is the unique maximal ideal in $\mc M_{\mc B (X)}$. Here we give a few examples, a more comprehensive list can be found in \cite[p.~4832]{kl1}.

\begin{example}\label{classical}
	If $X$ is any of the Banach spaces below then $\mc M_{\mc B(X)}$ is closed under addition and hence it is the unique maximal ideal in $\mc B (X)$:
	\begin{itemize}
		\item $X= c_0$ or $X= \ell^p$, where $1 \leqslant p < \infty$, in this case $\mc M_{\mc B(X)} = \mc K (X)$ (see \cite{gmf});
		\item $X=\ell^{\infty}$ (see \cite[p.~253]{ll});
		\item $X= L^p[0,1]$, where $1 \leqslant p <\infty$ (see \cite[Theorem~1.3 and the text after]{djsch});
		\item $X= C[0,1]$ (see the explanation in \cite[p.~4832]{kl1}).
	\end{itemize}
\end{example}

\begin{remark}
	Let $\mc A$ be a unital algebra and let $\mc J$ be a two-sided ideal in $\mc A$ such that $\mc A / \mc J$ is purely infinite. Then $\mc M_{\mc A}$ is closed under addition if and only if $\mc J = \mc M_{\mc A}$. Indeed, $\mc A / \mc J$ is simple by Lemma~\ref{l: pui}, or equivalently, $\mc J$ is a maximal ideal. Hence if $\mc M_{\mc A}$ is closed under addition then it is the unique maximal ideal in $\mc A$ by Proposition~\ref{p: uniquemax}, thus $\mc J = \mc M_{\mc A}$. The other direction is trivial.
\end{remark}

It is certainly not true however that for a unital Banach algebra $\mc A$ and a closed, two-sided ideal $\mc J$ of $\mc A$ the quotient $\mc A / \mc J$ is purely infinite only if $\mc M_{\mc A}$ is closed under addition. We shall show this by way of a counter-example. In order to do this, let us recall the following piece of terminology. For Banach spaces $X$ and $Y$ the symbol $\overline{\mathcal{G}}_{Y}(X)$ denotes the closed, two-sided ideal of operators on $X$ which approximately factor through $Y$.

\begin{lemma}\label{l: lpsums}
	Let $X:=\ell^p \oplus \ell^q$, where $1 \leqslant p < q < \infty$. Then $M_{\mathcal{B}(X)}$ is not closed under addition while $\left(\mathcal{B}(X)/ \overline{\mathcal{G}}_{Y}(X) \right)_{\mathcal{U}}$ is purely infinite, where $Y$ is $\ell^p$ or $\ell^q$ and $\mc U$ is a countably-incomplete ultrafilter. More precisely, for every $a \in \mathcal{B}(X)/ \overline{\mathcal{G}}_{Y}(X)$ with $\|a\|=1$ there exist $b,c \in \mathcal{B}(X)/ \overline{\mathcal{G}}_{Y}(X)$ such that $1=bac$ and $\|b\| \|c\|=1$.
\end{lemma}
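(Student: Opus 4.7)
My plan is to reduce both halves of the statement to the Calkin algebras $\mc B(\ell^r)/\mc K(\ell^r)$. First I will identify the quotients $\mc B(X)/\overline{\mc G}_{Y}(X)$ isometrically with the Calkin algebra of the ``other'' summand; the refined pure infiniteness statement in the lemma will then follow by pulling back Proposition~\ref{p: lpc0} through this isometric isomorphism, and the claim about $\mc M_{\mc B(X)}$ not being closed under addition will follow from the existence of two distinct maximal closed ideals in $\mc B(X)$.

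\textbf{Step 1 (identifying the ideals).} Using the block-matrix decomposition of $T\in\mc B(X)$ along $X=\ell^p\oplus\ell^q$, with entries $T_{rs}=Q_rTS_s$ (where $S_r$, $Q_r$ are the canonical inclusions and projections), I will show
\[
\overline{\mc G}_{\ell^p}(X)=\{T\in\mc B(X):T_{qq}\in\mc K(\ell^q)\},
\]
and symmetrically $\overline{\mc G}_{\ell^q}(X)=\{T:T_{pp}\in\mc K(\ell^p)\}$. The forward inclusion is Pitt's theorem: in an approximate factorisation $T=\lim_nS_nR_n$, the operator $R_nS_q\colon\ell^q\to\ell^p$ is compact, so $T_{qq}=\lim_n(Q_qS_n)(R_nS_q)$ is a norm-limit of compacts. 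The reverse inclusion uses $T=P_pT+P_qTP_p+P_qTP_q$: the first two summands factor through $\ell^p$ on the nose, and the third, $S_qT_{qq}Q_q$, approximately factors through $\ell^p$ when $T_{qq}$ is compact, by approximating $T_{qq}$ in norm by finite-rank operators (using the approximation property) each of which factors through a finite-dimensional subspace of $\ell^p$.

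\textbf{Step 2 (isometric Calkin quotient and the ultrapower statement).} The map $\phi_p\colon\mc B(X)/\overline{\mc G}_{\ell^p}(X)\to\mc B(\ell^q)/\mc K(\ell^q)$, $[T]\mapsto[T_{qq}]$, is an algebra homomorphism because the cross-term $(ST)_{qq}-S_{qq}T_{qq}=S_{qp}T_{pq}$ is compact by Pitt's, and it is surjective via the section $U\mapsto[S_qUQ_q]$. Equipping $X$ with the $\ell^\infty$-direct-sum norm makes all $S_r,Q_r$ contractive; the inequality $\|\phi_p([T])\|\leq\|[T]\|$ then follows because $L_{qq}$ is compact for every $L\in\overline{\mc G}_{\ell^p}(X)$, and the reverse $\|[T]\|\leq\|\phi_p([T])\|$ by observing that for any $K\in\mc K(\ell^q)$ the element $L:=-(T-P_qTP_q)-S_qKQ_q$ lies in $\overline{\mc G}_{\ell^p}(X)$ and satisfies $\|T+L\|=\|S_q(T_{qq}-K)Q_q\|\leq\|T_{qq}-K\|$. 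Hence $\phi_p$ is isometric, and symmetrically for $Y=\ell^q$. Applying Proposition~\ref{p: lpc0} to the Calkin algebra side and transporting back gives, for every $a$ of norm one in $\mc B(X)/\overline{\mc G}_Y(X)$, factors $b,c$ with $1=bac$ and $\|b\|\|c\|=1$; Proposition~\ref{p: ultrapurelyinf} then yields the purely infinite ultrapowers.

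\textbf{Step 3 ($\mc M_{\mc B(X)}$ is not closed under addition).} The purely infinite quotients from Step~2 are simple by Lemma~\ref{l: pui}, so both $\overline{\mc G}_{\ell^p}(X)$ and $\overline{\mc G}_{\ell^q}(X)$ are maximal closed ideals of $\mc B(X)$. They are distinct: the summand projection $P_p$ satisfies $(P_p)_{qq}=0$ and so lies in $\overline{\mc G}_{\ell^p}(X)$, whereas $(P_p)_{pp}=I_{\ell^p}$ is non-compact, putting $P_p\notin\overline{\mc G}_{\ell^q}(X)$. Every proper ideal of $\mc B(X)$ is contained in $\mc M_{\mc B(X)}$; if the latter were closed under addition then by Proposition~\ref{p: uniquemax} it would be the unique maximal ideal of $\mc B(X)$, contradicting the existence of two distinct maximal ideals. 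I expect the main technical obstacle to be the precise description of $\overline{\mc G}_Y(X)$ in Step~1 together with the verification in Step~2 that $\phi_p$ is \emph{isometric} (rather than just an algebra isomorphism), since only isometry lets the norm-one factorisation of Proposition~\ref{p: lpc0} transfer cleanly.
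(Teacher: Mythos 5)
Your proposal is correct and follows essentially the same route as the paper: use Pitt's theorem to identify $\mathcal{B}(X)/\overline{\mathcal{G}}_{Y}(X)$ isometrically with the Calkin algebra of one of the summands, pull the norm-one factorisation back from Proposition~\ref{p: lpc0}, and conclude with Proposition~\ref{p: ultrapurelyinf}. Two differences are worth recording. First, you prove in full what the paper outsources: the block description of $\overline{\mathcal{G}}_{Y}(X)$ and the isometry of the induced map are merely asserted there (with a citation to Pietsch for the two-maximal-ideals statement), whereas your Steps 1 and 2 supply the arguments, and your Step 3 rederives the ``not closed under addition'' claim from the simplicity of the two quotients instead of quoting it. Second, your orientation of the identification is the correct one: since every bounded operator $\ell^q\to\ell^p$ is compact when $p<q$, an operator approximately factoring through $\ell^p$ must have compact $\ell^q\to\ell^q$ corner, so $\mathcal{B}(X)/\overline{\mathcal{G}}_{\ell^p}(X)\cong\mathcal{B}(\ell^q)/\mathcal{K}(\ell^q)$; the paper's displayed matrices instead put $\mathcal{K}(\ell^p)$ in the $(1,1)$ corner of $\overline{\mathcal{G}}_{\ell^p}(X)$ and pair it with $\mathcal{B}(\ell^p)/\mathcal{K}(\ell^p)$, which cannot be right, as the projection onto the $\ell^q$ summand would then lie in $\overline{\mathcal{G}}_{\ell^p}(X)$ even though it does not approximately factor through $\ell^p$. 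This swap is immaterial for the truth of the lemma, since Proposition~\ref{p: lpc0} applies to both summands, but your version is the one that should be recorded.
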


\begin{proof}
	The first part of the claim is well known; see \textit{e.g.}\ \cite[Theorem~5.3.2]{pie}. Indeed, $\mc B(X)$ has exactly two maximal two-sided ideals, namely, $\overline{\mathcal{G}}_{\ell^p}(X)$ and $\overline{\mathcal{G}}_{\ell^q}(X)$. We will work with $Y= \ell^p$, the other case is entirely analogous.
	
	Let us recall that by Pitt's Theorem \cite[Theorem~2.1.4]{ak}, we can describe $\mathcal{B}(X)$ and $\overline{\mathcal{G}}_{\ell^p}(X)$ as
	\begin{align*}
	\mc B (X) &=
	\begin{bmatrix}
	\mc B (\ell^p) & \mc B(\ell^q, \ell^p) \\
	\mc B(\ell^p, \ell^q) & \mc B(\ell^q)
	\end{bmatrix} =
	\begin{bmatrix}
	\mc B (\ell^p) & \mc K(\ell^q, \ell^p) \\
	\mc B(\ell^p, \ell^q) & \mc B(\ell^q)
	\end{bmatrix}, \\
	\overline{\mathcal{G}}_{\ell^p}(X) &=
	\begin{bmatrix}
	\mc K (\ell^p) & \mc B(\ell^q, \ell^p) \\
	\mc B(\ell^p, \ell^q) & \mc B(\ell^q)
	\end{bmatrix} =
	\begin{bmatrix}
	\mc K (\ell^p) & \mc K(\ell^q, \ell^p) \\
	\mc B(\ell^p, \ell^q) & \mc B(\ell^q)
	\end{bmatrix}. 
	\end{align*}
	Consequently,
	\begin{align*}
	\mc B (X) / \overline{\mathcal{G}}_{\ell^p}(X)  \cong
	\mc B (\ell^p) / \mc K (\ell^p),
	\end{align*}
	where the isomorphism is clearly isometric. Hence the result follows from Propositions~\ref{p: lpc0} and~\ref{p: ultrapurelyinf}.
\end{proof}

\section{A ``Banach- analogue'' of the Cuntz algebra}\label{sec:cuntz_alg}

In this section we show that a naturally occurring infinite-dimensional Banach $*$-algebra is purely infinite, but it does not have a simple ultrapower.

\subsection{Preliminaries}

\subsubsection{Involutive semigroups with zero elements, and the Banach $*$-algebra $\ell^1(S \setminus \{\lozenge\})$}

We recall that a semigroup $S$ is \textit{involutive} if there is a map $s \mapsto s^*$, $S \to S$ with the property $(s^*)^*=s$ and $(st)^*=t^*s^*$ for each $s,t \in S$. \smallskip

We say that $S$ is a \textit{monoid with a zero element} if $S$ is a monoid with at least two
elements and there exists a $\lozenge \in S$ such that $\lozenge s = \lozenge = s \lozenge$ for
all $s \in S$.  If such a $\lozenge \in S$ exists then it is necessarily unique.  As we assume that
$S$ has more than one element, we have $\lozenge$ is different from the multiplicative
identity $e \in S$. Note that if $S$ is additionally involutive, then necessarily $\lozenge^*= \lozenge$. \smallskip

Let us briefly recall that it is possible to endow the Banach space $\ell^1(S\setminus \{ \lozenge \})$ with a unital Banach algebra structure; see \cite{dlr} and \cite{dh} for details; compare also \cite{gw}.
This is accomplished by identifying $\ell^1(S\setminus \{ \lozenge \})$ with the quotient
algebra $\ell^1(S) / \mathbb{C} \delta_{\lozenge}$, where $\ell^1(S)$ is endowed with the convolution product. This allows us to define a product $\#$ on $\ell^1(S\setminus \lbrace \lozenge \rbrace)$ which satisfies
\begin{align}\label{likeconvbutitsnot}
\delta_s \# \delta_t = \left\{
\begin{array}{l l}
\delta_{st} & \quad \text{if  } st \neq \lozenge \\
0 & \quad \text{if } st = \lozenge \\
\end{array} \qquad \left(s,t \in S\setminus \lbrace \lozenge \rbrace \right). \right.
\end{align}
In particular it follows from equation~\eqref{likeconvbutitsnot} that $(\ell^1(S\setminus \lbrace \lozenge \rbrace), \#)$ is a unital Banach algebra with $\delta_e$ being the unit, and such that $\Vert \delta_e \Vert =1$. 

If in addition $S$ is involutive, then the formula
\begin{align}
f^*(s):= \overline{f(s^*)} \qquad (f \in \ell^1(S \setminus \{ \lozenge\}), \; s \in S \setminus \{ \lozenge\})
\end{align}
defines an isometric involution on $\ell^1(S \setminus \{ \lozenge\})$. Hence $\ell^1(S \setminus \{ \lozenge\})$ is a Banach $*$-algebra.

\subsubsection{The Cuntz semigroup $\text{Cu}_2$}	

In the following $\text{Cu}_2$ denotes the second Cuntz semigroup (see also \cite[Definition~2.2,~p.~141]{renault};
this is also occasionally called the ``polycyclic monoid'' in the literature, \cite{cm}). 
(We warn the reader that ``Cuntz semigroup'' now also means something unrelated in $C^*$-algebra theory.)
That is, $\text{Cu}_2$ is an involutive semigroup with multiplicative identity $e$ and zero element $\lozenge$, and generators $s_1, s_2, s_1^*, s_2^*$ subject to the relations $s_1^*s_1=e=s_2^*s_2$ and $s_1^*s_2 = \lozenge = s_2^* s_1$. In notation, $\text{Cu}_2$ is
\begin{align}
\langle s_1, s_2, s_1^*, s_2^* \colon s_1^* s_1 = e = s_2^* s_2, \; s_1^* s_2 = \lozenge = s_2^* s_1 \rangle.
\end{align}

We now mostly follow the notation of \cite[Section~3.3]{dlr}.

\begin{definition}
	We set
	\begin{align}
	\mathbf{I}_n:= \{(i_1,i_2, \ldots, i_n) \colon i_1, i_2, \ldots, i_n \in \{1,2\} \} \qquad (n \in \mathbb{N}),
	\end{align}
	and $\mathbf{I}_0 := \{\emptyset\}$. Let $\mathbf{I} := \bigcup_{n \in \mathbb{N}_0} \mathbf{I}_n$, and $\mathbf{L}:= \prod_{n \in \mathbb{N}} \{1,2\}$.
	
	Let $\mathbf{n} =(n_i) \in \mathbf{L}$, we then set
	\begin{align}
	\mathbf{n}_0 &:= \emptyset, \notag \\
	\mathbf{n}_l &:= (n_1,n_2, \ldots n_l) \in \mathbf{I}_l \qquad (l \in \mathbb{N}).
	\end{align}

	If $\mathbf{i}, \mathbf{j} \in \mathbf{I}$, then we define $\mathbf{ij} \in \mathbf{I}$ by concatenation
	\begin{align}
	\mathbf{ij} := \begin{cases} \mathbf{i} &\text{ if } \mathbf{j} = \emptyset, \\
	\mathbf{j} &\text{ if } \mathbf{i} = \emptyset, \\ 
	(i_1,i_2, \ldots, i_m, j_1,j_2, \ldots j_n) &\text{ if } \mathbf{i} = (i_1,i_2, \ldots, i_m) \text{ and } \mathbf{j} = (j_1,j_2, \ldots, j_n). \end{cases}
	\end{align}
	For each $\mathbf{i} \in \mathbf{I}$ we define $s_{\mathbf{i}} \in \text{Cu}_2 \setminus \{\lozenge\}$ by
	\begin{align}
	s_{\mathbf{i}}:= \begin{cases}
	e &\text{ if } \mathbf{i}= \emptyset, \\
	s_{i_1} s_{i_2} \cdots s_{i_n} &\text{ if } \mathbf{i}= (i_1,i_2, \ldots i_n) \in \mathbf{I} \setminus \{\emptyset\}.
	\end{cases}
	\end{align}
\end{definition}

We clearly have $s_{\mathbf{i}} s_{\mathbf{j}} = s_{\mathbf{ij}}$ and $s_{\mathbf{ij}}^*:= (s_{\mathbf{ij}})^* =(s_{\mathbf{i}} s_{\mathbf{j}})^* = s_{\mathbf{j}}^* s_{\mathbf{i}}^*$.

\subsection{Basic combinatorics of $\text{Cu}_2$}

We collect some combinatorial results, which while known, we state for ease of reference.  The following result, as stated below, can be found in \cite[Lemma~3.7]{dh}, where it is attributed to Cuntz (see \cite[Lemmas~1.2~and~1.3]{cuntz}).

\begin{lemma}\label{l: nuts_and_bolts}
	\begin{enumerate}
		\item[(1)] For every $\mathbf{i},\mathbf{j} \in \mathbf{I}$ we have
		\begin{align}
		s_{\mathbf{i}}^* s_{\mathbf{j}}= \begin{cases}
		s_{\mathbf{k}}^* & \text{ if } \mathbf{i}= \mathbf{jk} \text{ for some } \mathbf{k} \in \mathbf{I}, \\
		s_{\mathbf{k}} & \text{ if } \mathbf{j}= \mathbf{ik} \text{ for some } \mathbf{k} \in \mathbf{I}, \\
		\lozenge & \text{ otherwise.}
		\end{cases}
		\end{align}
		\item[(2)] For every $t \in \text{Cu}_2 \setminus \{\lozenge \}$ there exist unique $\mathbf{i}, \mathbf{j} \in \mathbf{I}$ such that $t=s_{\mathbf{i}} s_{\mathbf{j}}^*$.
	\end{enumerate}
\end{lemma}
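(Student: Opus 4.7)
For part (1) my approach is to induct on $m := \min(|\mathbf{i}|,|\mathbf{j}|)$. The base case $m=0$ is immediate: if $\mathbf{i}=\emptyset$ then $s_{\mathbf{i}}^* = e$, so $s_{\mathbf{i}}^* s_{\mathbf{j}} = s_{\mathbf{j}}$, consistent with the case $\mathbf{j}=\mathbf{i}\mathbf{k}$ and $\mathbf{k}=\mathbf{j}$, and symmetrically for $\mathbf{j}=\emptyset$. For $m \geqslant 1$, write $\mathbf{i}=(i_1)\mathbf{i}'$ and $\mathbf{j}=(j_1)\mathbf{j}'$, giving
\[
s_{\mathbf{i}}^* s_{\mathbf{j}} = s_{\mathbf{i}'}^*\bigl(s_{i_1}^* s_{j_1}\bigr)s_{\mathbf{j}'}.
\]
The defining relations of $\text{Cu}_2$ say $s_{i_1}^* s_{j_1}=\lozenge$ when $i_1\neq j_1$ (in which case the trichotomy of the statement also fails, and the absorbing element gives $\lozenge$ overall), while $s_{i_1}^* s_{j_1}=e$ when $i_1=j_1$, reducing the product to $s_{\mathbf{i}'}^* s_{\mathbf{j}'}$. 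Since the prefix-relation between $\mathbf{i}$ and $\mathbf{j}$ coincides with that between $\mathbf{i}'$ and $\mathbf{j}'$ (with the common first letter restored in any trailing $\mathbf{k}$), the inductive hypothesis closes the argument.

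Existence in part (2) follows from (1) by rewriting. Any $t \in \text{Cu}_2 \setminus \{\lozenge\}$ is representable by a finite word in the generators $s_1,s_2,s_1^*,s_2^*$; grouping consecutive unstarred and starred generators produces an alternating expression $s_{\mathbf{i}_1} s_{\mathbf{j}_1}^* s_{\mathbf{i}_2} s_{\mathbf{j}_2}^* \cdots$. Each inner pair $s_{\mathbf{j}_k}^* s_{\mathbf{i}_{k+1}}$ equals by part (1) either $\lozenge$ (which would force $t = \lozenge$, contrary to assumption, so this case does not arise at the end of the reduction), or a single $s_{\mathbf{k}}$ or $s_{\mathbf{k}}^*$ which then absorbs into its neighbour and strictly reduces the alternation count. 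Iterating until only one ``unstarred-then-starred'' block remains yields $t = s_{\mathbf{i}} s_{\mathbf{j}}^*$.

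Uniqueness is most cleanly handled via a faithful representation. Let $\text{Cu}_2$ act on $\mathbf{I}\cup\{\lozenge\}$, with $\lozenge$ absorbing, by declaring $s_a$ to prepend the letter $a$ and $s_a^*$ to strip a leading $a$ (sending any word not beginning with $a$ to $\lozenge$). A direct check on generators gives $s_a^* s_a \cdot \mathbf{k} = \mathbf{k} = e \cdot \mathbf{k}$ and $s_a^* s_b \cdot \mathbf{k} = \lozenge$ for $a \neq b$, so the action descends to a monoid action of $\text{Cu}_2$. Under this action $s_{\mathbf{i}} s_{\mathbf{j}}^* \cdot \mathbf{k}$ equals $\mathbf{i}\mathbf{k}'$ when $\mathbf{k} = \mathbf{j}\mathbf{k}'$ and $\lozenge$ otherwise, so the support of $s_{\mathbf{i}} s_{\mathbf{j}}^*$ in $\mathbf{I}$ recovers $\mathbf{j}$ as its unique shortest element, and then evaluating at $\mathbf{j}$ recovers $\mathbf{i}$. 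The only genuinely non-routine step in the whole argument is this well-definedness check, and even that reduces to the two generator-level identities just displayed.
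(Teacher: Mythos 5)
Your proof is correct, but note that the paper does not prove this lemma at all: it is stated for ease of reference and attributed to \cite[Lemma~3.7]{dh} and ultimately to Cuntz \cite[Lemmas~1.2~and~1.3]{cuntz}, so there is no in-paper argument to compare against. Your self-contained argument is the standard one for the polycyclic monoid and it holds up. Part (1) by induction on $\min(|\mathbf{i}|,|\mathbf{j}|)$ works because $s_{\mathbf{i}}^*s_{\mathbf{j}} = s_{\mathbf{i}'}^*(s_{i_1}^*s_{j_1})s_{\mathbf{j}'}$ either hits $\lozenge$ (exactly when neither word is a prefix of the other at the first discrepancy) or strips a common first letter, preserving the prefix relation. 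Existence in (2) is the usual normal-form reduction, with the alternation count as the terminating measure. For uniqueness, the partial-shift action on $\mathbf{I}\cup\{\lozenge\}$ is exactly the right tool: the relations $s_a^*s_a=e$ and $s_a^*s_b=\lozenge$ ($a\neq b$) are all that must be checked for the universal property of the presentation to apply, and then $\mathbf{j}$ is recovered as the shortest word in the support of $s_{\mathbf{i}}s_{\mathbf{j}}^*$ and $\mathbf{i}$ as its image; this also shows $s_{\mathbf{i}}s_{\mathbf{j}}^*\neq\lozenge$, which justifies the ``this case does not arise'' step in your existence argument. The only caveat worth flagging is a purely bookkeeping one: with a left action, $s_{\mathbf{j}}^*=s_{j_\beta}^*\cdots s_{j_1}^*$ strips $j_1$ first, so one should verify once that $s_{\mathbf{j}}^*\cdot\mathbf{k}=\mathbf{k}'$ precisely when $\mathbf{k}=\mathbf{j}\mathbf{k}'$; with that checked, everything you wrote goes through.
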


\begin{remark}
	Let $t \in \text{Cu}_2 \setminus \{\lozenge \}$. By Lemma~\ref{l: nuts_and_bolts} (2) there exist unique $\mathbf{i}, \mathbf{j} \in \mathbf{I}$ such that $t=s_{\mathbf{i}} s_{\mathbf{j}}^*$. Let $\alpha, \beta \in \mathbb{N}_0$ be the unique numbers such that $\mathbf{i} \in \mathbf{I}_{\alpha}$ and $\mathbf{j} \in \mathbf{I}_{\beta}$. 
	
	Thus we may define the \textit{length} of $t$ as
	\begin{align}
	\text{length}(t):= \alpha + \beta.
	\end{align}
\end{remark}

In fact, Lemma~\ref{l: nuts_and_bolts} (2) features so frequently in our arguments that we shall mostly use it implicitly without referring to it. \smallskip

A very important corollary of the above is the lemma below, which we will use numerous times throughout the rest of the paper.

\begin{lemma}\label{l: cu2_combi}
	Let $\mathbf{i}, \mathbf{j}, \mathbf{m}, \mathbf{n} \in \mathbf{I}$. Then
	\begin{align}\label{f: prodscu2}
	s_{\mathbf{i}}^* s_{\mathbf{m}} s_{\mathbf{n}}^* s_{\mathbf{j}} = \begin{cases}
	s_{\mathbf{qp}}^* & \text{ if } \mathbf{i}= \mathbf{mp} \text{ and } \mathbf{n} = \mathbf{jq} \text{ for some } \mathbf{p}, \mathbf{q} \in \mathbf{I}, \\
	s_{\mathbf{r}}^* & \text{ if } \mathbf{i}= \mathbf{mqr} \text{ and } \mathbf{j} = \mathbf{nq} \text{ for some } \mathbf{r}, \mathbf{q} \in \mathbf{I},  \\
	s_{\mathbf{p}} s_{\mathbf{q}}^* & \text{ if } \mathbf{m}= \mathbf{ip} \text{ and } \mathbf{n} = \mathbf{jq} \text{ for some } \mathbf{p}, \mathbf{q} \in \mathbf{I}, \\
	s_{\mathbf{pq}} & \text{ if } \mathbf{m}= \mathbf{ip} \text{ and } \mathbf{j} = \mathbf{nq} \text{ for some } \mathbf{p}, \mathbf{q} \in \mathbf{I}, \\
	s_{\mathbf{r}} & \text{ if } \mathbf{i}= \mathbf{mp} \text{ and } \mathbf{j} = \mathbf{npr} \text{ for some } \mathbf{p}, \mathbf{r} \in \mathbf{I}, \\
	\lozenge &\text{ otherwise.}
	\end{cases}
	\end{align}
	Consequently, $s_{\mathbf{i}}^* s_{\mathbf{m}} s_{\mathbf{n}}^* s_{\mathbf{j}} = e$ if and only if $\mathbf{i}= \mathbf{mk}$ and $\mathbf{j}= \mathbf{nk}$ for some $\mathbf{k} \in \mathbf{I}$.
\end{lemma}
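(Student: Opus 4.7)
\textbf{Proof plan for Lemma~\ref{l: cu2_combi}.} The strategy is to apply Lemma~\ref{l: nuts_and_bolts}(1) twice. I would first simplify $s_{\mathbf{i}}^* s_{\mathbf{m}}$, splitting into three cases:
(A) $\mathbf{i} = \mathbf{mp}$ for some $\mathbf{p}\in\mathbf{I}$, giving $s_{\mathbf{i}}^* s_{\mathbf{m}} = s_{\mathbf{p}}^*$;
(B) $\mathbf{m} = \mathbf{ip}$ for some $\mathbf{p}\in\mathbf{I}$, giving $s_{\mathbf{i}}^* s_{\mathbf{m}} = s_{\mathbf{p}}$;
(C) otherwise $s_{\mathbf{i}}^* s_{\mathbf{m}} = \lozenge$, which immediately yields the final $\lozenge$ case of the displayed formula. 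When both (A) and (B) apply (i.e.\ $\mathbf{i} = \mathbf{m}$), both reductions give $e$ and the two subsequent analyses will give consistent outputs.

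In Case (B), using that $s_{\mathbf{i}}^*s_{\mathbf{m}}s_{\mathbf{n}}^*s_{\mathbf{j}} = s_{\mathbf{p}}(s_{\mathbf{n}}^* s_{\mathbf{j}})$, a second application of Lemma~\ref{l: nuts_and_bolts}(1) to $s_{\mathbf{n}}^* s_{\mathbf{j}}$ produces exactly the third, fourth, and ``otherwise'' branches of the displayed formula: the sub-case $\mathbf{n}=\mathbf{jq}$ gives $s_{\mathbf{p}} s_{\mathbf{q}}^*$ (matching the third line), while $\mathbf{j}=\mathbf{nq}$ gives $s_{\mathbf{p}} s_{\mathbf{q}} = s_{\mathbf{pq}}$ (matching the fourth line). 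Case (A) is slightly more delicate: there we rewrite $s_{\mathbf{p}}^*s_{\mathbf{n}}^* = (s_{\mathbf{n}}s_{\mathbf{p}})^* = s_{\mathbf{np}}^*$ and apply Lemma~\ref{l: nuts_and_bolts}(1) to $s_{\mathbf{np}}^* s_{\mathbf{j}}$.

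The main obstacle, and the only step that needs care, is the further split of Case (A). Here $s_{\mathbf{np}}^* s_{\mathbf{j}}$ branches according to whether $\mathbf{j}$ is a prefix of $\mathbf{np}$ or vice versa. If $\mathbf{np} = \mathbf{jk}$, one must consider whether $\mathbf{j}$ is a prefix of $\mathbf{n}$ (so $\mathbf{n} = \mathbf{jq}$, whence $\mathbf{k} = \mathbf{qp}$, yielding the first line of the display with exponent $\mathbf{qp}$), or whether $\mathbf{n}$ is a proper prefix of $\mathbf{j}$ (so $\mathbf{j} = \mathbf{nq}$, whence $\mathbf{p} = \mathbf{qk}$ and thus $\mathbf{i} = \mathbf{mqk}$, yielding the second line with $\mathbf{r} = \mathbf{k}$). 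The remaining sub-case $\mathbf{j} = \mathbf{npr}$ produces $s_{\mathbf{r}}$, matching the fifth line. Exhausting these sub-cases verifies that every non-$\lozenge$ outcome fits into precisely one of the five displayed cases.

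For the ``consequently'' part, I would observe that $s_{\mathbf{i}}^* s_{\mathbf{m}} s_{\mathbf{n}}^* s_{\mathbf{j}} = e = s_{\emptyset}$ forces, by Lemma~\ref{l: nuts_and_bolts}(2), the concatenated indices in each of the five branches to be empty. Running through each line: in line~1 we need $\mathbf{qp} = \emptyset$, so $\mathbf{p} = \mathbf{q} = \emptyset$ and $\mathbf{k} := \emptyset$ works; in line~2 we need $\mathbf{r} = \emptyset$ and take $\mathbf{k} := \mathbf{q}$; in lines~3,~4 we need $\mathbf{p}=\mathbf{q}=\emptyset$; in line~5 we take $\mathbf{k} := \mathbf{p}$. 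Conversely, if $\mathbf{i}=\mathbf{mk}$ and $\mathbf{j}=\mathbf{nk}$, the product falls into line~2 with $\mathbf{q}=\mathbf{k}$, $\mathbf{r}=\emptyset$ (or trivially into line~1 when $\mathbf{k}=\emptyset$), giving $e$ in either case.
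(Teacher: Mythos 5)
Your proposal is correct and is essentially the paper's argument: both rest on two (or three) applications of Lemma~\ref{l: nuts_and_bolts}(1) followed by the same prefix case analysis, the only difference being the bracketing --- the paper first reduces $s_{\mathbf{i}}^*s_{\mathbf{m}}$ and $s_{\mathbf{n}}^*s_{\mathbf{j}}$ separately and then resolves the mixed middle term $s_{\mathbf{p}}^*s_{\mathbf{q}}$, whereas you absorb $s_{\mathbf{p}}^*$ into $s_{\mathbf{n}}^*$ as $s_{\mathbf{np}}^*$ and compare prefixes against $\mathbf{j}$ directly. Your treatment of the ``consequently'' part likewise matches the paper's itemized inspection of when each branch yields $e$.
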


\begin{proof}
	Applying Lemma~\ref{l: nuts_and_bolts} to $s_{\mathbf{i}}^* s_{\mathbf{m}}$ and $s_{\mathbf{n}}^* s_{\mathbf{j}}$, we immediately obtain that
	\begin{align}
	s_{\mathbf{i}}^* s_{\mathbf{m}} s_{\mathbf{n}}^* s_{\mathbf{j}} = \begin{cases}
	s_{\mathbf{p}}^* s_{\mathbf{q}}^* = s_{\mathbf{qp}}^* & \text{ if } \mathbf{i}= \mathbf{mp} \text{ and } \mathbf{n} = \mathbf{jq} \text{ for some } \mathbf{p}, \mathbf{q} \in \mathbf{I}, \\
	s_{\mathbf{p}}^* s_{\mathbf{q}} & \text{ if } \mathbf{i}= \mathbf{mp} \text{ and } \mathbf{j} = \mathbf{nq} \text{ for some } \mathbf{p}, \mathbf{q} \in \mathbf{I},  \\
	s_{\mathbf{p}} s_{\mathbf{q}}^* & \text{ if } \mathbf{m}= \mathbf{ip} \text{ and } \mathbf{n} = \mathbf{jq} \text{ for some } \mathbf{p}, \mathbf{q} \in \mathbf{I}, \\
	s_{\mathbf{p}} s_{\mathbf{q}} = s_{\mathbf{pq}} & \text{ if } \mathbf{m}= \mathbf{ip} \text{ and } \mathbf{j} = \mathbf{nq} \text{ for some } \mathbf{p}, \mathbf{q} \in \mathbf{I}, \\
	\lozenge &\text{ otherwise.}
	\end{cases}
	\end{align}
	Once more we apply Lemma~\ref{l: nuts_and_bolts} to $s_{\mathbf{p}}^* s_{\mathbf{q}}$, which yields the desired formula \eqref{f: prodscu2}.
	
	The ``consequently'' part follows from inspecting the cases in the above formula and from observing that
	\begin{itemize}
		\item $s_{\mathbf{qp}}^* = e$ or $s_{\mathbf{pq}} = e$ or $s_{\mathbf{p}} s_{\mathbf{q}}^* = e$ if and only if $\mathbf{p}= \emptyset = \mathbf{q}$ if and only if $\mathbf{i}= \mathbf{m}$ and $\mathbf{j}= \mathbf{n}$,
		\item $s_{\mathbf{r}}^* = e$ if and only if $\mathbf{r}= \emptyset$ if and only if $\mathbf{i}=\mathbf{mq}$ and $\mathbf{j}= \mathbf{nq}$,
		\item $s_{\mathbf{r}}=e$ if and only if $\mathbf{r}= \emptyset$ if and only if $\mathbf{i}= \mathbf{mp}$ and $\mathbf{j}= \mathbf{np}$.
	\end{itemize}
\end{proof}

\subsection{A purely infinite quotient of $(\ell^1(\text{Cu}_2 \setminus \{ \lozenge \}), \#)$}

From now on we let $ \mc A := (\ell^1(\text{Cu}_2 \setminus \{ \lozenge \}), \#)$. In this section we study a natural quotient of $\mc A$, which is related to the Leavitt algebra $L_2$ (see Remark~\ref{rem:leavitt_algs}), and show that the identity of this quotient factors through every non-zero element of the quotient.

\begin{remark}\label{rem:cohn_algs}
Suppose we start instead with the group ring $\mathbb C[\text{Cu}_2]$, which is just the algebra of finitely supported
elements of $\ell^1(\text{Cu}_2)$, and similarly quotient by the span of $\delta_{\lozenge}$.  As observed in \cite[Section~1]{p1}, the algebra $\mathbb C[\text{Cu}_2] / \mathbb C\delta_\lozenge$ was studied, with a different
presentation, by Cohn in \cite[Section~5]{cohn}, and is sometimes called the \emph{Cohn algebra $C_2$}.

We could hence view $\mc A$ as being a Banach algebra completion of $C_2$.  To our knowledge, this algebra has not been studied from this perspective; for example, it is not mentioned in \cite{dlr}.  We make remarks about links, or lack thereof, with Phillips's work in \cite{p1} below, Remark~\ref{rem:phillips}.
\end{remark}

Let us observe first that in view of Lemma~\ref{l: nuts_and_bolts}, we may write
\begin{align}\label{eq: elements_of_a}
f= \sum\limits_{t \in \text{Cu}_2 \setminus \{\lozenge \}} f(t) \delta_t = \sum\limits_{\mathbf{i}, \mathbf{j} \in \mathbf{I}} f(s_{\mathbf{i}} s_{\mathbf{j}}^*) \delta_{s_{\mathbf{i}} s_{\mathbf{j}}^*} \qquad (f \in \mc A).
\end{align}

Our first goal is to find a useful sufficient condition which guarantees that an element $f \in \mc A$ is purely infinite, in other words, that there exist $g,h \in \mc A$ with $g \# f \# h= \delta_e$.

\begin{definition} \
	\begin{itemize}
		\item Let $v \in \text{Cu}_2 \setminus \{\lozenge \}$, and let $\mathbf{i}, \mathbf{j}$ be the unique elements in $\mathbf{I}$ with $v= s_{\mathbf{i}} s_{\mathbf{j}}^*$. 
		\begin{itemize}
			\item Suppose $\mathbf{n} \in \mathbf{L}$. We define
			\begin{align}\label{eq: pre_zero_sum}
			v_l^{\mathbf{n}}:= s_{\mathbf{in}_l} s_{\mathbf{jn}_l}^* = s_{\mathbf{i}}s_{(n_1, \ldots, n_l)} s_{(n_1, \ldots, n_l)}^* s_{\mathbf{j}}^* \qquad (l \in \mathbb{N}_0).
			\end{align}
			\item Suppose $\mathbf{n} \in \mathbf{I}$. There is a unique $\alpha \in \mathbb{N}_0$ satisfying $\mathbf{n} \in \mathbf{I}_{\alpha}$; hence $\mathbf{n} = (n_1, n_2, \ldots , n_{\alpha})$.%, where $n_i \in \{1,2\}$ whenever $1 \leqslant i \leqslant \alpha$.
			We define $v_l^{\mathbf{n}}$ as in \eqref{eq: pre_zero_sum} provided $l \in \mathbb{N}_0$ is such that $l \leqslant \alpha$. Otherwise $v_l^{\mathbf{n}}$ is undefined.
		\end{itemize}
		We have in particular $v_0^{\mathbf{n}}= s_{\mathbf{in}_0} s_{\mathbf{jn}_0}^* = s_{\mathbf{i}} s_{\mathbf{j}}^*=v$, and that $e^{\mathbf{n}}_l = s_{(n_1, \ldots, n_l)} s_{(n_1, \ldots, n_l)}^*$.
		\item We say that $f \in \mc A$ \emph{has zero sums at $v =s_{\mathbf{i}} s_{\mathbf{j}}^* \in \text{Cu}_2 \setminus \{\lozenge\}$} if
		\begin{align}
		\sum\limits_{l \in \mathbb{N}_0} f(v_l^{\mathbf{n}})= \sum\limits_{l \in \mathbb{N}_0} f(s_{\mathbf{in}_l} s_{\mathbf{jn}_l}^*)= 0 \qquad (\mathbf{n} \in \mathbf{L}).\label{eq:one}
		\end{align}
	\end{itemize}
\end{definition}

Notice that as $f$ is an $\ell^1$ element, the sum in (\ref{eq:one}) is absolutely convergent.

\begin{lemma}\label{l: premain1}
	Let $\mathbf{n} \in \mathbf{I} \cup \mathbf{L}$ and $f \in \mc A$. Then
	\begin{align}
	\delta_{s_{\mathbf{n}_l}^*} \# f \# \delta_{s_{\mathbf{n}_l}} = \left( \sum\limits_{k=0}^l f(e_k^{\mathbf{n}}) \right) \delta_e + \sum\limits_{\substack{ \mathbf{i}, \mathbf{j} \in \mathbf{I} \\ s_{\mathbf{n}_l}^* s_{\mathbf{i}} s_{\mathbf{j}}^* s_{\mathbf{n}_l} \notin \{e, \lozenge \}}} f(s_{\mathbf{i}} s_{\mathbf{j}}^* ) \delta_{s_{\mathbf{n}_l}^* s_{\mathbf{i}} s_{\mathbf{j}}^* s_{\mathbf{n}_l}} \qquad (l \in \mathbb{N}_0).
	\end{align}
\end{lemma}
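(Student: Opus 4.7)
The plan is to do a direct computation: expand $f$ in the basis $(\delta_{s_{\mathbf{i}} s_{\mathbf{j}}^*})_{\mathbf{i},\mathbf{j} \in \mathbf{I}}$ via \eqref{eq: elements_of_a}, push the sum through the products $\delta_{s_{\mathbf{n}_l}^*} \# (\cdot) \# \delta_{s_{\mathbf{n}_l}}$ by continuity, and then use Lemma~\ref{l: cu2_combi} to identify exactly which index pairs $(\mathbf{i},\mathbf{j})$ produce the unit $e$.

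More concretely, since $f = \sum_{\mathbf{i},\mathbf{j} \in \mathbf{I}} f(s_{\mathbf{i}} s_{\mathbf{j}}^*) \delta_{s_{\mathbf{i}} s_{\mathbf{j}}^*}$ with absolutely summable coefficients, and since left and right multiplications by the norm-one elements $\delta_{s_{\mathbf{n}_l}^*}$ and $\delta_{s_{\mathbf{n}_l}}$ are bounded, I can use \eqref{likeconvbutitsnot} termwise to write
\begin{align*}
\delta_{s_{\mathbf{n}_l}^*} \# f \# \delta_{s_{\mathbf{n}_l}} = \sum_{\mathbf{i}, \mathbf{j} \in \mathbf{I}} f(s_{\mathbf{i}} s_{\mathbf{j}}^*) \, \delta_{s_{\mathbf{n}_l}^* s_{\mathbf{i}} s_{\mathbf{j}}^* s_{\mathbf{n}_l}},
\end{align*}
with the convention that terms for which $s_{\mathbf{n}_l}^* s_{\mathbf{i}} s_{\mathbf{j}}^* s_{\mathbf{n}_l} = \lozenge$ vanish. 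I then split this sum into the contribution from index pairs giving $e$ and the remaining contribution (those giving some element of $\text{Cu}_2 \setminus \{e, \lozenge\}$), which already matches the stated right-hand side modulo identifying the first sum.

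The one substantive step is to verify that the pairs $(\mathbf{i},\mathbf{j})$ with $s_{\mathbf{n}_l}^* s_{\mathbf{i}} s_{\mathbf{j}}^* s_{\mathbf{n}_l} = e$ are precisely $\{(\mathbf{n}_k, \mathbf{n}_k) : 0 \leqslant k \leqslant l\}$. By the ``consequently'' clause of Lemma~\ref{l: cu2_combi}, such equality holds iff there exists $\mathbf{k} \in \mathbf{I}$ with $\mathbf{n}_l = \mathbf{i}\mathbf{k}$ and $\mathbf{n}_l = \mathbf{j}\mathbf{k}$. Comparing lengths and the uniqueness of prefixes in $\mathbf{n}_l$, these two conditions force $\mathbf{i} = \mathbf{j}$ to be the prefix of $\mathbf{n}_l$ of length $l - |\mathbf{k}|$; ranging over $|\mathbf{k}| = 0, 1, \ldots, l$ this yields exactly $\mathbf{i} = \mathbf{j} = \mathbf{n}_k$ for $k \in \{0, 1, \ldots, l\}$. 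Since $s_{\mathbf{n}_k} s_{\mathbf{n}_k}^* = e_k^{\mathbf{n}}$ by definition, the $\delta_e$-coefficient is $\sum_{k=0}^l f(e_k^{\mathbf{n}})$, finishing the proof. I expect no real obstacle here; the only subtlety is checking the length/prefix argument cleanly, which is essentially a bookkeeping step built into Lemma~\ref{l: cu2_combi}.
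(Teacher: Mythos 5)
Your proposal is correct and follows essentially the same route as the paper: expand $f$ in the basis $(\delta_{s_{\mathbf{i}}s_{\mathbf{j}}^*})$, multiply termwise (discarding the $\lozenge$ terms), and use the ``consequently'' clause of Lemma~\ref{l: cu2_combi} to identify the pairs contributing to the $\delta_e$-coefficient as exactly $\{(\mathbf{n}_k,\mathbf{n}_k)\colon 0\leqslant k\leqslant l\}$. The paper phrases this last step as the set identity $\{s_{\mathbf{i}}s_{\mathbf{j}}^*\colon s_{\mathbf{n}_l}^* s_{\mathbf{i}} s_{\mathbf{j}}^* s_{\mathbf{n}_l}=e\}=\{e_k^{\mathbf{n}}\colon 0\leqslant k\leqslant l\}$, which is the same prefix/length bookkeeping you carry out.
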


\begin{proof}
	Let us fix an $l \in \mathbb{N}_0$. We first note that by Lemma~\ref{l: cu2_combi}
	\begin{align*}
	\{s_{\mathbf{i}} s_{\mathbf{j}}^* \colon \mathbf{i}, \mathbf{j} \in \mathbf{I}, \; s_{\mathbf{n}_l}^* s_{\mathbf{i}} s_{\mathbf{j}}^* s_{\mathbf{n}_l} = e \} &= \{s_{\mathbf{i}} s_{\mathbf{j}}^* \colon \mathbf{i}, \mathbf{j} \in \mathbf{I}, \; \mathbf{n}_l = \mathbf{ip}, \; \mathbf{n}_l = \mathbf{jp} \text{ for some } \mathbf{p} \in \mathbf{I} \} \\
	&= \{s_{\mathbf{i}} s_{\mathbf{i}}^* \colon \mathbf{i}= \mathbf{n}_k \text{ for some } 0 \leqslant k \leqslant l \} \\
	&= \{e_{k}^{\mathbf{n}} \colon 0 \leqslant k \leqslant l \}.
	\end{align*}
	Since
	\begin{align*}
	\delta_{s_{\mathbf{n}_l}^*} \# f \# \delta_{s_{\mathbf{n}_l}} &= \sum\limits_{\substack{ \mathbf{i}, \mathbf{j} \in \mathbf{I} \\ s_{\mathbf{n}_l}^* s_{\mathbf{i}} s_{\mathbf{j}}^* s_{\mathbf{n}_l} \neq \lozenge}} f(s_{\mathbf{i}} s_{\mathbf{j}}^* ) \delta_{s_{\mathbf{n}_l}^* s_{\mathbf{i}} s_{\mathbf{j}}^* s_{\mathbf{n}_l}},
	\end{align*}
	the result follows.
\end{proof}

\begin{proposition}\label{p: main1}
	Let $f \in \mathcal{A}$ be such that it does not have zero sums at the multiplicative unit $e \in \text{Cu}_2$. Then there exist $g,h \in \mc A$ with $g \# f \# h = \delta_e$.
\end{proposition}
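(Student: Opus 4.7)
The plan is to find $\mathbf{n} \in \mathbf{L}$ and $l \in \mathbb{N}_0$ such that the element $\Phi_l(f) := \delta_{s_{\mathbf{n}_l}^*} \# f \# \delta_{s_{\mathbf{n}_l}}$ is invertible in $\mathcal{A}$; once this is achieved, setting $g := \Phi_l(f)^{-1} \# \delta_{s_{\mathbf{n}_l}^*}$ and $h := \delta_{s_{\mathbf{n}_l}}$ yields $g \# f \# h = \delta_e$. By Lemma~\ref{l: premain1} we have the decomposition $\Phi_l(f) = \alpha_l \delta_e + R_l$ with $\alpha_l := \sum_{k=0}^l f(e_k^{\mathbf{n}})$ and $R_l$ supported on $\text{Cu}_2 \setminus \{e, \lozenge\}$, so it will suffice to arrange both $\alpha' := \lim_l \alpha_l \neq 0$ and $\|R_l\| \to 0$; Neumann series will then furnish the inverse.

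By hypothesis some $\mathbf{n}^* \in \mathbf{L}$ satisfies $\alpha^* := \sum_l f(e_l^{\mathbf{n}^*}) \neq 0$. For each $L \in \mathbb{N}$ and each $\mathbf{n} \in \mathbf{L}$ agreeing with $\mathbf{n}^*$ on the first $L$ coordinates, the partial sum $\sum_{l=0}^L f(e_l^{\mathbf{n}})$ coincides with $\sum_{l=0}^L f(e_l^{\mathbf{n}^*})$, while the tail is bounded in absolute value by $\Sigma_L := \sum \bigl\{ |f(s_\mathbf{i} s_\mathbf{i}^*)| : \mathbf{n}^*_L \text{ is a proper prefix of } \mathbf{i} \bigr\}$, which tends to $0$ as $L \to \infty$ by $\ell^1$-summability of $f$. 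Choosing $L$ large enough guarantees $|\sum_l f(e_l^{\mathbf{n}})| \geq |\alpha^*|/2$ uniformly on the cylinder $C_L := \{\mathbf{n} \in \mathbf{L} : \mathbf{n}_L = \mathbf{n}^*_L\}$. As $C_L$ is uncountable while the set of eventually periodic sequences in $\mathbf{L}$ is countable, I fix $\mathbf{n} \in C_L$ which is \emph{not} eventually periodic, and set $\alpha' := \sum_l f(e_l^{\mathbf{n}}) \neq 0$.

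With this $\mathbf{n}$, the pairs $(\mathbf{i}, \mathbf{j}) \in \support(f)$ contributing to $R_l$ split into two groups. The \emph{long} pairs, with $|\mathbf{i}| > l$ or $|\mathbf{j}| > l$, contribute at most $\sum \bigl\{ |f(s_\mathbf{i} s_\mathbf{j}^*)| : |\mathbf{i}| > l \text{ or } |\mathbf{j}| > l \bigr\}$ to $\|R_l\|$; this tends to $0$ as $l \to \infty$. For the \emph{short} pairs, compatibility with $\mathbf{n}_l$ (requiring $s_{\mathbf{n}_l}^* s_\mathbf{i}$ and $s_\mathbf{j}^* s_{\mathbf{n}_l}$ both non-$\lozenge$) forces both $\mathbf{i}$ and $\mathbf{j}$ to be prefixes of $\mathbf{n}_l$, hence of $\mathbf{n}$; writing $\mathbf{i} = \mathbf{n}_\alpha$, $\mathbf{j} = \mathbf{n}_\beta$, the case $\alpha = \beta$ contributes to $\alpha_l$, not $R_l$. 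For $\alpha < \beta$, Lemma~\ref{l: nuts_and_bolts} gives $s_{\mathbf{n}_l}^* s_{\mathbf{n}_\alpha} s_{\mathbf{n}_\beta}^* s_{\mathbf{n}_l} = s_\mathbf{p}^* s_\mathbf{q}$ with $\mathbf{p} = (n_{\alpha+1}, \ldots, n_l)$, $\mathbf{q} = (n_{\beta+1}, \ldots, n_l)$, which is non-$\lozenge$ exactly when $\mathbf{q}$ is a prefix of $\mathbf{p}$, i.e., $n_{\alpha+k} = n_{\beta+k}$ for $k = 1, \ldots, l - \beta$; the case $\alpha > \beta$ is symmetric. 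Since no two distinct tails of an aperiodic $\mathbf{n}$ coincide, this shift condition fails for $l$ beyond some $l(\alpha, \beta)$, so the cross-term from each fixed such pair eventually becomes $\lozenge$. Dominated convergence against the summable weights $|f(s_{\mathbf{n}_\alpha} s_{\mathbf{n}_\beta}^*)|$ then sends the total short off-diagonal contribution to $\|R_l\|$ to $0$, whence $\|R_l\| \to 0$.

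For $l$ sufficiently large, $|\alpha_l - \alpha'| + \|R_l\| < |\alpha'|$, so the decomposition $\Phi_l(f) = \alpha' \delta_e + \bigl( (\alpha_l - \alpha') \delta_e + R_l \bigr)$ exhibits $\Phi_l(f)$ as invertible in $\mathcal{A}$ via Neumann series. Setting $v := \Phi_l(f)^{-1}$, the elements $g := v \# \delta_{s_{\mathbf{n}_l}^*}$ and $h := \delta_{s_{\mathbf{n}_l}}$ satisfy $g \# f \# h = \delta_e$. The main technical obstacle is the analysis of the short off-diagonal pairs in the third paragraph: one must first secure sum-nonvanishing on a whole cylinder (the $\Sigma_L$ tail estimate) in order to have the freedom to perturb to an aperiodic $\mathbf{n}$, and only then does the dominated-convergence argument exploiting aperiodicity neutralise the residual off-diagonal contributions.
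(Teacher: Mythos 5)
Your proof is correct, but it takes a genuinely different route from the paper's. The paper keeps the $\mathbf{n}$ supplied by the hypothesis, truncates $f$ to a finitely supported $f'$ with $\|f-f'\|<\varepsilon$ and $|z|\geqslant 2\varepsilon$, and then runs a finite induction: at each step it conjugates by a well-chosen generator $\delta_{s_i}$ so that the support of the error term strictly decreases while the $\delta_e$-coefficient is unchanged (the point being that $f'$ vanishes on elements of length $>2M$, so the newly created diagonal term $h_0(s_is_i^*)$ is zero); after finitely many steps the error is \emph{exactly} zero, and one transfers back from $f'$ to $f$ by a Neumann-series perturbation. You instead work with $f$ itself but change $\mathbf{n}$: the cylinder estimate $\Sigma_L\to 0$ secures $\sum_l f(e_l^{\mathbf{n}})\neq 0$ uniformly on $C_L$, a cardinality argument lets you pick $\mathbf{n}\in C_L$ aperiodic, and then $\|R_l\|\to 0$ because long pairs are controlled by an $\ell^1$ tail while each short off-diagonal pair $(\mathbf{n}_\alpha,\mathbf{n}_\beta)$ is eventually annihilated by the failure of the shift-coincidence $n_{\alpha+k}=n_{\beta+k}$, summed via dominated convergence. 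Your combinatorics check out (in particular, short pairs are indeed forced to be prefixes of $\mathbf{n}_l$, long pairs never contribute to the $\delta_e$-coefficient, and persistence of the shift condition for all $l$ would force eventual periodicity). What the paper's argument buys is an explicit finite word $\mathbf{p}$ and no need to perturb $\mathbf{n}$; what yours buys is avoiding the truncation and the support-decreasing induction, at the price of the cylinder-stability and aperiodicity tricks. Both proofs conclude with the same Neumann-series step.
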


\begin{proof}
	By the assumption there is an $\mathbf{n} = (n_1,n_2, \ldots, n_k, \ldots ) \in \mathbf{L}$ such that $\sum_{k \in \mathbb{N}_0} f(e_k^{\mathbf{n}}) \neq 0$. Let us set $z_N := \sum_{k=0}^N f(e_k^{\mathbf{n}})$ for each $N \in \mathbb{N}_0$. As $f \in \mc A$, the sequence $(z_N)$ converges to some non-zero element in $\mathbb{C}$, therefore there is an $\varepsilon >0$ and $N' \in \mathbb{N}_0$ such that $|z_n| \geqslant 2 \varepsilon$ for each $n \geqslant N'$. From Lemma~\ref{l: premain1} we see that
	\begin{align}
	\delta_{s_{\mathbf{n}_l}^*} \# f \# \delta_{s_{\mathbf{n}_l}} &= z_l \delta_e + \sum\limits_{\substack{ \mathbf{i}, \mathbf{j} \in \mathbf{I} \\ s_{\mathbf{n}_l}^* s_{\mathbf{i}} s_{\mathbf{j}}^* s_{\mathbf{n}_l} \notin \{e, \lozenge \}}} f(s_{\mathbf{i}} s_{\mathbf{j}}^*) \delta_{s_{\mathbf{n}_l}^* s_{\mathbf{i}} s_{\mathbf{j}}^* s_{\mathbf{n}_l}} \qquad (l \in \mathbb{N}_0).
	\end{align}
	There is a $K \in \mathbb{N}$ such that $\sum_{\mathbf{i}, \mathbf{j} \in \mathbf{I} \setminus \cup_{k=0}^M \mathbf{I}_k} |f(s_{\mathbf{i}} s_{\mathbf{j}}^*) | < \varepsilon$ for all $M \geqslant K$. Let us fix an $M \geqslant \max \{N', K\}$ and define
	\begin{align}
	f':= \sum\limits_{\mathbf{i}, \mathbf{j} \in \cup_{k=0}^M \mathbf{I}_k} f(s_{\mathbf{i}} s_{\mathbf{j}}^*) \delta_{s_{\mathbf{i}} s_{\mathbf{j}}^*}.
	\end{align}
	Clearly, $f' \in \mc A$ is finitely supported such that $\|f-f'\| < \varepsilon$. Also, $f'(t)=0$ whenever $t \in \text{Cu}_2$ is such that $\text{length}(t) >2M$. We also note that $f'(e_k^{\mathbf{n}}) = f'(s_{\mathbf{n}_k} s_{\mathbf{n}_k}^*) =f(s_{\mathbf{n}_k} s_{\mathbf{n}_k}^*) = f(e_k^{\mathbf{n}})$ for each $k \in \{1, \ldots, M\}$ by the definition of $f'$. Consequently,
	\begin{align}\label{e: z_is_what_we_want}
	z_M = \sum\limits_{k=0}^M f(e_k^{\mathbf{n}}) = \sum\limits_{k=0}^M f'(e_k^{\mathbf{n}}).
	\end{align}
	To ease notation, we put $z:=z_M$.
	
	\begin{claim}\label{cl: iterative}
		There is a $\mathbf{p} \in \mathbf{I}$ such that
		\begin{align}
		\delta_{s_{\mathbf{p}}^*} \# \delta_{s_{\mathbf{n}_M}^*} \# f' \# \delta_{s_{\mathbf{n}_M}} \# \delta_{s_{\mathbf{p}}} = \delta_{s_{\mathbf{n}_M \mathbf{p}}^*} \# f' \# \delta_{s_{\mathbf{n}_M \mathbf{p}}} &= z \delta_e.
		\end{align}
	\end{claim}	
	\begin{proof}[Proof of Claim~\ref{cl: iterative}]
		By Lemma~\ref{l: premain1} and \eqref{e: z_is_what_we_want} we have $\delta_{s_{\mathbf{n}_M}^*} \# f' \# \delta_{s_{\mathbf{n}_M}} = z \delta_e + h_0$, where
		\begin{align}
		h_0 := \sum\limits_{\substack{ \mathbf{i}, \mathbf{j} \in \mathbf{I} \\ s_{\mathbf{n}_M}^* s_{\mathbf{i}} s_{\mathbf{j}}^* s_{\mathbf{n}_M} \notin \{e, \lozenge \}}} f'(s_{\mathbf{i}} s_{\mathbf{j}}^*) \delta_{s_{\mathbf{n}_M}^* s_{\mathbf{i}} s_{\mathbf{j}}^* s_{\mathbf{n}_M}}.
		\end{align}
		If $h_0=0$ then we are done. Otherwise, $H_0 := \support(h_0) \neq \emptyset$. We claim that there is an $i \in \{1,2\}$ such that
		\begin{align}
		\left| \{ s_i^* t s_i \colon t \in H_0, \; s_i^* t s_i \neq \lozenge \} \right| < |H_0|.
		\end{align}
		To show this, observe that as $\support(f')$, and hence also $H_0$, is finite, it is enough to see that $s_1^*ts_1 = \lozenge$ or $s_2^*ts_2 = \lozenge$ for some $t \in H_0$. This readily follows however from $\{e\} \neq H_0$ (which clearly holds as $e \notin H_0$).
		
		For this choice of $i$, applying Lemma~\ref{l: premain1} again we see that
		\begin{align}
		\delta_{s_{\mathbf{n}_M i}^*} \# f' \# \delta_{s_{\mathbf{n}_M} i} &= \delta_{s_i^*} \# (z \delta_e +h_0) \# \delta_{s_i} = z \delta_e + \delta_{s_i^*} \# h_0 \# \delta_{s_i} \notag \\
		&= (z + h_0(e) + h_0(s_i s_i^*)) \delta_e  + h_1,
		\end{align}
		where
		\begin{align}
		h_1 := \sum\limits_{\substack{ \mathbf{i}, \mathbf{j} \in \mathbf{I} \\ s_i^* s_{\mathbf{i}} s_{\mathbf{j}}^* s_i \notin \{e, \lozenge \}}} h_0(s_{\mathbf{i}} s_{\mathbf{j}}^*) \delta_{s_i^* s_{\mathbf{i}} s_{\mathbf{j}}^* s_i}.
		\end{align}
		Note that $\support(h_1) \subseteq \{ s_i^* t s_i \colon t \in H_0, \; s_i^* t s_i \neq \lozenge \}$.
		
		On the one hand $h_0(e)=0$. On the other hand $s_{\mathbf{n}_M}^* s_{\mathbf{i}} s_{\mathbf{j}}^* s_{\mathbf{n}_M} = s_i s_i^*$ if and only if $\mathbf{i}= \mathbf{n}_M i$ and $\mathbf{j}= \mathbf{n}_M i$ by Lemma~\ref{l: cu2_combi}, hence
		\begin{align}
		h_0(s_i s_i^*) =  f'(s_{\mathbf{n}_M i} s_{\mathbf{n}_M i}^*) =0.
		\end{align}
		The last equality follows because $\text{length}(s_{\mathbf{n}_M i} s_{\mathbf{n}_M i}^*) = 2(M+1)$, and $f'$ vanishes on elements of $\text{Cu}_2$ of length at least $2M+1$.  Consequently,
		\begin{align}
		\delta_{s_{\mathbf{n}_M i}^*} \# f' \# \delta_{s_{\mathbf{n}_M} i} = z \delta_e  + h_1,
		\end{align}
		where $H_1:= \support(h_1)$ is such that $|H_1| < |H_0|$.
		
		Let us fix some $k_0 > | \support(f')|$. Continuing recursively, we obtain $i_1,i_2, \ldots, i_{k_0} \in \{ 1,2 \}$ and finitely supported functions $(h_k)_{k=1}^{k_0}$ in $\mc A$ with $H_k:= \support(h_k)$ such that
		\begin{align}
		\delta_{s_{\mathbf{n}_M (i_1, \ldots, i_{k})}^*} &\# f' \# \delta_{s_{\mathbf{n}_M} (i_1, \ldots, i_{k})} = z \delta_e  + h_k \qquad (1 \leqslant k \leqslant k_0), \\
		|H_0| &> |H_1| > \ldots > |H_{k_0}|.
		\end{align}
		As $\support(f')$ is finite, we must have that $H_{k_0}= \emptyset$ or equivalently $h_{k_0}=0$. Thus setting $\mathbf{p}:= (i_1, \ldots, i_{k_0}) \in \mathbf{I}$ yields the claim.
	\end{proof}
	
	We now finish the main proof. From the claim  we obtain
	\begin{align}
	\| \delta_e - z^{-1} \delta_{s_{\mathbf{n}_M \mathbf{p}}^*} \# f \# \delta_{s_{\mathbf{n}_M \mathbf{p}}} \| &= |z|^{-1} \| \delta_{s_{\mathbf{n}_M \mathbf{p}}^*} \# (f'- f) \# \delta_{s_{\mathbf{n}_M \mathbf{p}}} \| \notag \\
	&\leqslant |z|^{-1} \|f-f' \| < 1/2,
	\end{align}
	thus the Carl Neumann series implies $u:=z^{-1} \delta_{s_{\mathbf{n}_M \mathbf{p}}^*} \# f \# \delta_{s_{\mathbf{n}_M \mathbf{p}}} \in \inv(\mc A)$. Hence setting $g:= u^{-1} \# z^{-1} \delta_{s_{\mathbf{n}_M \mathbf{p}}^*}$ and $h:= \delta_{s_{\mathbf{n}_M \mathbf{p}}}$ concludes the proof.
\end{proof}

In the following, let $\mc J$ denote the closed, two-sided ideal in $\mc A$ generated by the element 
\begin{align}
f_0 :=\delta_e - \delta_{s_1 s_1^*} - \delta_{s_2 s_2^*}.
\end{align}
Clearly $f_0$ is an projection in $\mc A$, in other words, $f_0^2 = f_0$ and $f_0^* = f_0$. We immediately see from the formula~\eqref{eq: elements_of_a} and Lemma~\ref{l: nuts_and_bolts} (2) that
\begin{align}\label{eq: elements_of_j_1}
\mc J &= \overline{\spanning} \{g \# f_0 \# h \colon g,h \in \mc A \} = \overline{\spanning} \{\delta_{s_{\mathbf{i}} s_{\mathbf{k}}^*} \# f_0 \# \delta_{s_{\mathbf{l}} s_{\mathbf{j}}^*} \colon \mathbf{i}, \mathbf{j}, \mathbf{k}, \mathbf{l} \in \mathbf{I} \}.
\end{align}

Since $f_0^* = f_0$, it follows from \eqref{eq: elements_of_j_1} that  $\mc J$ is a $*$-ideal in $\mc A$, and therefore $\mc A / \mc J$ is a Banach $*$-algebra.

\begin{remark}\label{rem:leavitt_algs}
Continuing Remark~\ref{rem:cohn_algs}, in the Cohn Algebra $C_2 \cong \mathbb C[\text{Cu}_2]/\mathbb C\delta_\lozenge$ we could also consider the ideal, say $J_2$, generated by $f_0$.  Then $C_2 / J_2$ is seen to be isomorphic to the \emph{Leavitt algebra $L_2$}, see \cite[Section~1]{p1}, which was first considered (over the field with $2$ elements) in \cite{leavitt}.

$\mc A/\mc J$ is a Banach algebraic completion of $L_2$, which again seems not to have been considered in the literature before. Compare with Remark~\ref{rem:phillips} below.
\end{remark}

Let us introduce some new terminology which will render the technical proofs in this section significantly more transparent.

\begin{definition} \
	\begin{enumerate}
		\item[(i)] An element $t=s_{\mathbf{i}} s_{\mathbf{j}}^* \in \text{Cu}_2 \setminus \{ \lozenge \}$ is \emph{symmetric} if $\mathbf{i} = \mathbf{j}$.
		\item[(ii)] We say that $t \in \text{Cu}_2 \setminus \{ \lozenge \}$ is a \emph{symmetric expansion of} $r = s_{\mathbf{m}} s_{\mathbf{n}}^* \in \text{Cu}_2 \setminus \{ \lozenge \}$ if there exists a symmetric $u \in \text{Cu}_2 \setminus \{ \lozenge \}$ with $t=s_{\mathbf{m}}u s_{\mathbf{n}}^*$. If in addition $u \neq e$ then we say that $t$ is a \emph{proper symmetric expansion of} $r$.
		\item[(iii)] For some $t \in \text{Cu}_2 \setminus \{\lozenge\}$ the set of symmetric expansions of $t$ is denoted by $S_t$.
		\item[(iv)] An element of $\text{Cu}_2 \setminus \{ \lozenge \}$ \emph{is without symmetric core} if it is not the proper symmetric expansion of any element in $\text{Cu}_2 \setminus \{ \lozenge \}$.
	\end{enumerate}
\end{definition}

The following are immediate from the definition.

\begin{remark} \
	\begin{itemize}
		\item An element $t \in \text{Cu}_2 \setminus \{ \lozenge \}$ is a symmetric expansion of $r = s_{\mathbf{m}} s_{\mathbf{n}}^* \in \text{Cu}_2 \setminus \{ \lozenge \}$ if and only if there exists $\mathbf{i} \in \mathbf{I}$ with $t= s_{\mathbf{mi}} s_{\mathbf{ni}}^*$. Also, $t$ is a proper expansion of $r$ if and only if $\mathbf{i} \neq \emptyset$.
		\item An element $t \in \text{Cu}_2 \setminus \{ \lozenge \}$ is without symmetric core if and only if whenever $\mathbf{m}, \mathbf{n}, \mathbf{i} \in \mathbf{I}$ are such that $t=s_{\mathbf{mi}} s_{\mathbf{ni}}^*$ then $\mathbf{i} = \emptyset$.
	\end{itemize}
\end{remark}

\begin{lemma}\label{l: partition}
	The set
	\begin{align}
	\{ S_v \colon v \in \text{Cu}_2 \setminus \{\lozenge\} \text{ is without symmetric core}\} 
	\end{align}
	forms a partition of $\text{Cu}_2 \setminus \{\lozenge\}$.
\end{lemma}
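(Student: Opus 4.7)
The plan is to verify the two defining conditions of a partition: every element of $\text{Cu}_2 \setminus \{\lozenge\}$ lies in some $S_v$ with $v$ without symmetric core (covering), and distinct such $v,v'$ yield disjoint $S_v, S_{v'}$ (pairwise disjointness).

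For covering, fix $t \in \text{Cu}_2 \setminus \{\lozenge\}$, and by Lemma~\ref{l: nuts_and_bolts}(2) write it uniquely as $t = s_{\mathbf{i}} s_{\mathbf{j}}^*$. I would let $\mathbf{k} \in \mathbf{I}$ be the longest common \emph{suffix} of $\mathbf{i}$ and $\mathbf{j}$ (this exists and is unique as $\mathbf{i}, \mathbf{j}$ are finite tuples), so that $\mathbf{i} = \mathbf{m} \mathbf{k}$ and $\mathbf{j} = \mathbf{n} \mathbf{k}$ for uniquely determined $\mathbf{m}, \mathbf{n} \in \mathbf{I}$. Setting $v := s_{\mathbf{m}} s_{\mathbf{n}}^*$, I claim $v$ has no symmetric core. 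Indeed, if $\mathbf{m} = \mathbf{a}\mathbf{l}$ and $\mathbf{n} = \mathbf{b}\mathbf{l}$ with $\mathbf{l} \neq \emptyset$, then $\mathbf{l}\mathbf{k}$ would be a strictly longer common suffix of $\mathbf{i},\mathbf{j}$, contradicting maximality of $\mathbf{k}$. Since $t = s_{\mathbf{m}\mathbf{k}} s_{\mathbf{n}\mathbf{k}}^*$, we have $t \in S_v$.

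For disjointness, suppose $t \in S_v \cap S_{v'}$ with both $v$ and $v'$ without symmetric core. Write $v = s_{\mathbf{m}} s_{\mathbf{n}}^*$ and $v' = s_{\mathbf{m}'} s_{\mathbf{n}'}^*$; by definition of symmetric expansion there exist $\mathbf{k}, \mathbf{k}' \in \mathbf{I}$ with
\[
t = s_{\mathbf{m}\mathbf{k}} s_{\mathbf{n}\mathbf{k}}^* = s_{\mathbf{m}'\mathbf{k}'} s_{\mathbf{n}'\mathbf{k}'}^*.
\]
Uniqueness in Lemma~\ref{l: nuts_and_bolts}(2) gives $\mathbf{m}\mathbf{k} = \mathbf{m}'\mathbf{k}'$ and $\mathbf{n}\mathbf{k} = \mathbf{n}'\mathbf{k}'$. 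Without loss of generality assume the length of $\mathbf{k}'$ is at least that of $\mathbf{k}$; then there is a (possibly empty) $\mathbf{p} \in \mathbf{I}$ with $\mathbf{k}' = \mathbf{p}\mathbf{k}$, and the equalities above force $\mathbf{m} = \mathbf{m}'\mathbf{p}$ and $\mathbf{n} = \mathbf{n}'\mathbf{p}$. But then $v$ itself is a symmetric expansion of $v'$, and since $v$ is without symmetric core, applying the definition to the decomposition $\mathbf{m} = \mathbf{m}'\mathbf{p}$, $\mathbf{n} = \mathbf{n}'\mathbf{p}$ forces $\mathbf{p} = \emptyset$. Hence $\mathbf{k} = \mathbf{k}'$, $\mathbf{m} = \mathbf{m}'$, $\mathbf{n} = \mathbf{n}'$, and $v = v'$.

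The only mildly delicate point is the covering step, where one must verify that the $v$ produced by stripping off the longest common suffix is indeed without symmetric core; this is immediate from the maximality of the chosen suffix. The disjointness step then reduces to the uniqueness statement in Lemma~\ref{l: nuts_and_bolts}(2) together with a direct application of the defining property of ``without symmetric core''.
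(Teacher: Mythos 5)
Your proposal is correct and follows essentially the same route as the paper: for covering you strip off the longest common suffix of $\mathbf{i}$ and $\mathbf{j}$ (the paper phrases this as choosing $\alpha$ maximal with $\mathbf{i}=\mathbf{mk}$, $\mathbf{j}=\mathbf{nk}$ for some $\mathbf{k}\in\mathbf{I}_\alpha$), and for disjointness you compare the lengths of the two expanding words and invoke the ``without symmetric core'' hypothesis to force the discrepancy to be empty, exactly as in the paper. No gaps.
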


\begin{proof}
	Let $t \in \text{Cu}_2 \setminus \{\lozenge\}$ be arbitrary. There exist unique $\mathbf{p}, \mathbf{q} \in \mathbf{I}$ such that $t=s_{\mathbf{p}} s_{\mathbf{q}}^*$. Let $\alpha \in \mathbb{N}_0$ be maximal with respect to the property that there is an $\mathbf{i} \in \mathbf{I}_{\alpha}$ with $\mathbf{p}= \mathbf{mi}$ and $\mathbf{q}= \mathbf{ni}$ for some $\mathbf{m}, \mathbf{n} \in \mathbf{I}$. Then $t=s_{\mathbf{p}} s_{\mathbf{q}}^* = s_{\mathbf{mi}} s_{\mathbf{ni}}^* = s_{\mathbf{m}} (s_{\mathbf{i}} s_{\mathbf{i}}^*) s_{\mathbf{n}}^*$ shows that $t$ is the symmetric expansion of $v:=s_{\mathbf{m}} s_{\mathbf{n}}^*$. Observe that $v$ is without symmetric core. For assume towards a contradiction that there exists $\mathbf{k} \in \mathbf{I} \setminus \mathbf{I}_0$ such that $v= s_{\mathbf{ak}}s_{\mathbf{bk}}^*$ for some $\mathbf{a}, \mathbf{b} \in \mathbf{I}$. Therefore $\mathbf{m}=\mathbf{ak}$ and $\mathbf{n}= \mathbf{bk}$ must hold, consequently $t=s_{\mathbf{mi}} s_{\mathbf{ni}}^* = s_{\mathbf{aki}} s_{\mathbf{bki}}^*$. This contradicts the maximality of $\alpha$.
	
	Let $v,w \in \text{Cu}_2 \setminus \{\lozenge\}$ be without symmetric core. Assume there is some $t \in S_v \cap S_w$. Let $\mathbf{i}, \mathbf{j} \in \mathbf{I}$ be unique with $v=s_{\mathbf{i}} s_{\mathbf{j}}^*$, then $t=s_{\mathbf{ik}} s_{\mathbf{jk}}^*$ for some $\mathbf{k} \in \mathbf{I}$. Similarly, let $\mathbf{p}, \mathbf{q} \in \mathbf{I}$ be unique with $w=s_{\mathbf{p}} s_{\mathbf{q}}^*$, then $t=s_{\mathbf{pl}} s_{\mathbf{ql}}^*$ for some $\mathbf{l} \in \mathbf{I}$. As $s_{\mathbf{ik}} s_{\mathbf{jk}}^* =t=s_{\mathbf{pl}} s_{\mathbf{ql}}^*$, it follows that $\mathbf{ik}=\mathbf{pl}$ and $\mathbf{jk}=\mathbf{ql}$. We want to show that $v=w$, equivalently $\mathbf{i}=\mathbf{p}$ and $\mathbf{j}= \mathbf{q}$. Let $\alpha, \beta \in \mathbb{N}_0$ be the unique numbers such that $\mathbf{k} \in \mathbf{I}_{\alpha}$ and $\mathbf{l} \in \mathbf{I}_{\beta}$. Note that it is enough to show that $\alpha=\beta$. Assume towards a contradiction that, say, $\alpha < \beta$. Then there are $\mathbf{m} \in \mathbf{I}_{\beta - \alpha}$ and $\mathbf{n} \in \mathbf{I}_{\alpha}$ with $\mathbf{l} = \mathbf{mn}$. Thus $\mathbf{ik}= \mathbf{pmn}$, hence from $\mathbf{k}, \mathbf{n} \in \mathbf{I}_{\alpha}$ we obtain $\mathbf{i}= \mathbf{pm}$. Similarly, we get $\mathbf{j}= \mathbf{qm}$. But then $v = s_{\mathbf{i}} s_{\mathbf{j}}^* = s_{\mathbf{pm}} s_{\mathbf{qm}}^*$, which by $\beta - \alpha >0$ contradicts that $v$ is without symmetric core.
\end{proof}

\begin{proposition}\label{p: main2}
	Suppose $f \in \mc A$ is such that it has zero sums at some $v \in \text{Cu}_2 \setminus \{\lozenge \}$. Define $h := \sum_{t \in S_v} f(t) \delta_t \in \mc A$. Then $h \in \mathcal{J}$.
\end{proposition}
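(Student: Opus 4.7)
The strategy is to exploit the defining relation $\delta_e \equiv \delta_{s_1s_1^*} + \delta_{s_2s_2^*} \pmod{\mathcal{J}}$ in order to ``push down'' every summand of $h$ to a common depth $N$, and then use the zero sum hypothesis to show that the $\ell^1$-norm of the resulting expression in $\mathcal{A}$ tends to $0$. Write $v = s_{\mathbf{i}} s_{\mathbf{j}}^*$, so that $S_v = \{s_{\mathbf{ik}} s_{\mathbf{jk}}^* : \mathbf{k} \in \mathbf{I}\}$. First I would establish the \emph{expansion relation}: for every $\mathbf{p}, \mathbf{q} \in \mathbf{I}$ and every $r \in \mathbb{N}_0$,
$\delta_{s_{\mathbf{p}} s_{\mathbf{q}}^*} \equiv \sum_{\mathbf{m} \in \mathbf{I}_r} \delta_{s_{\mathbf{pm}} s_{\mathbf{qm}}^*} \pmod{\mathcal{J}}$. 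The base case $r=1$ is immediate from $\delta_{s_{\mathbf{p}}} \# f_0 \# \delta_{s_{\mathbf{q}}^*} = \delta_{s_{\mathbf{p}} s_{\mathbf{q}}^*} - \delta_{s_{\mathbf{p}1} s_{\mathbf{q}1}^*} - \delta_{s_{\mathbf{p}2} s_{\mathbf{q}2}^*} \in \mathcal{J}$, and the general case follows by iterating.

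Next, introduce the truncation $h_N := \sum_{l=0}^{N} \sum_{\mathbf{k} \in \mathbf{I}_l} f(s_{\mathbf{ik}} s_{\mathbf{jk}}^*)\, \delta_{s_{\mathbf{ik}} s_{\mathbf{jk}}^*} \in \mathcal{A}$. Since $f \in \ell^1$ and the elements of $S_v$ are distinct, $h_N \to h$ in $\mathcal{A}$. Applying the expansion relation with $r = N - l$ to each summand and re-indexing the double sum by the leaf $\mathbf{k}' = \mathbf{km} \in \mathbf{I}_N$, the coefficient of each $\delta_{s_{\mathbf{ik}'} s_{\mathbf{jk}'}^*}$ collapses to the ``root-to-leaf path sum'' $z_N(\mathbf{k}') := \sum_{l=0}^{N} f(s_{\mathbf{ik}'_l} s_{\mathbf{jk}'_l}^*)$, where $\mathbf{k}'_l$ is the length-$l$ prefix of $\mathbf{k}'$. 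Therefore $h_N \equiv g_N \pmod{\mathcal{J}}$ with $g_N := \sum_{\mathbf{k}' \in \mathbf{I}_N} z_N(\mathbf{k}')\, \delta_{s_{\mathbf{ik}'} s_{\mathbf{jk}'}^*}$.

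Finally, for each $\mathbf{k}' \in \mathbf{I}_N$ choose an extension $\mathbf{n}^{\mathbf{k}'} \in \mathbf{L}$ with $\mathbf{n}^{\mathbf{k}'}_N = \mathbf{k}'$ (e.g.\ append $1$'s). The zero sum hypothesis at $v$ applied to $\mathbf{n}^{\mathbf{k}'}$ yields $z_N(\mathbf{k}') = -\sum_{l > N} f(s_{\mathbf{in}^{\mathbf{k}'}_l} s_{\mathbf{jn}^{\mathbf{k}'}_l}^*)$. Crucially, as $(\mathbf{k}', l)$ ranges over $\mathbf{I}_N \times \{l > N\}$, the words $\mathbf{n}^{\mathbf{k}'}_l$ are pairwise distinct (different lengths if $l \neq l'$; otherwise different first-$N$ coordinates), so by the triangle inequality $\|g_N\|_1 = \sum_{\mathbf{k}' \in \mathbf{I}_N} |z_N(\mathbf{k}')| \leqslant \sum_{t : \mathrm{length}(t) > |\mathbf{i}| + |\mathbf{j}| + 2N} |f(t)|$, which tends to $0$ as $N \to \infty$ since $f \in \ell^1$. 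Hence $\|h_N + \mathcal{J}\| \to 0$, and combined with $h_N \to h$ and the continuity of the quotient map, this forces $h + \mathcal{J} = 0$, i.e., $h \in \mathcal{J}$.

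The main obstacle is the bookkeeping in the second paragraph: one must verify that after the push-down the coefficient at each leaf is \emph{exactly} the path sum $z_N(\mathbf{k}')$, so that the zero sum condition may convert it into a tail of $\|f\|_1$. The distinctness of the extended words $\mathbf{n}^{\mathbf{k}'}_l$ is what lets the triangle inequality collapse the total cleanly; everything else is a routine $\ell^1$-tail estimate.
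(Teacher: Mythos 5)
Your proposal is correct and follows essentially the same route as the paper: both iterate the relation $\delta_{s_{\mathbf{im}}s_{\mathbf{jm}}^*}\equiv\delta_{s_{\mathbf{im}1}s_{\mathbf{jm}1}^*}+\delta_{s_{\mathbf{im}2}s_{\mathbf{jm}2}^*}\pmod{\mathcal J}$ to push $h$ down to depth $N$, identify the resulting coefficients as root-to-leaf path sums, convert these via the zero-sum hypothesis into tails of $\|f\|_1$, and conclude by closedness of $\mathcal J$. The only cosmetic difference is that you truncate $h$ first and track $h-h_N$ separately, whereas the paper carries the tail term $\sum_{\mathbf{k}\in\mathbf{I}\setminus\bigcup_{r=0}^{N}\mathbf{I}_r}f(s_{\mathbf{ik}}s_{\mathbf{jk}}^*)\delta_{s_{\mathbf{ik}}s_{\mathbf{jk}}^*}$ along inside one displayed membership statement.
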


\begin{proof}
	Let $\mathbf{i}, \mathbf{j} \in \mathbf{I}$ be such that $v=s_{\mathbf{i}} s_{\mathbf{j}}^*$. Then $S_v = \{ s_{\mathbf{ik}} s_{\mathbf{jk}}^* \colon \mathbf{k} \in \mathbf{I} \}$ and hence $h= \sum_{\mathbf{k} \in \mathbf{I}} f(s_{\mathbf{ik}} s_{\mathbf{jk}}^*) \delta_{s_{\mathbf{ik}} s_{\mathbf{jk}}^*}$. From $f_0 \in \mc J$ we immediately get
	\begin{align}\label{eq: inideal}
	\delta_{s_{\mathbf{im}} s_{\mathbf{jm}}^*} - \delta_{s_{\mathbf{im}1} s_{\mathbf{jm}1}^*} - \delta_{s_{\mathbf{im}2} s_{\mathbf{jm}2}^*} = \delta_{s_{\mathbf{im}}} \# f_0 \#  \delta_{s_{\mathbf{jm}}^*} \in \mc J \qquad (\mathbf{m} \in \mathbf{I}).
	\end{align}
	In particular, setting $\mathbf{m}:= \emptyset$ in \eqref{eq: inideal} yields
	\begin{align}\label{eq: instep1}
	\delta_{v} - \delta_{s_{\mathbf{i}1} s_{\mathbf{j}1}^*} - \delta_{s_{\mathbf{i}2} s_{\mathbf{j}2}^*} \in \mc J.
	\end{align}
	Hence from 
	\begin{align}
	h= f(v) \delta_v + f(s_{\mathbf{i}1} s_{\mathbf{j}1}^*) \delta_{s_{\mathbf{i}1} s_{\mathbf{j}1}^*} + f(s_{\mathbf{i}2} s_{\mathbf{j}2}^*) \delta_{s_{\mathbf{i}2} s_{\mathbf{j}2}^*} + \sum_{\mathbf{k} \in \mathbf{I} \setminus (\mathbf{I}_0 \cup \mathbf{I}_1)} f(s_{\mathbf{ik}} s_{\mathbf{jk}}^*) \delta_{s_{\mathbf{ik}} s_{\mathbf{jk}}^*}
	\end{align}
	and \eqref{eq: instep1} we see that
	\begin{align}\label{eq: improvedeq}
	\big(f(v) &+ f(s_{\mathbf{i}1} s_{\mathbf{j}1}^*) \big) \delta_{s_{\mathbf{i}1} s_{\mathbf{j}1}^*} + \big(f(v) + f(s_{\mathbf{i}2} s_{\mathbf{j}2}^*) \big) \delta_{s_{\mathbf{i}2} s_{\mathbf{j}2}^*} \notag \\
	&+ \sum_{\mathbf{k} \in \mathbf{I} \setminus (\mathbf{I}_0 \cup \mathbf{I}_1)} f(s_{\mathbf{ik}} s_{\mathbf{jk}}^*) \delta_{s_{\mathbf{ik}} s_{\mathbf{jk}}^*} -h \in \mc J \notag \\
	\notag \\
	\Longleftrightarrow \sum_{\mathbf{k} \in \mathbf{I}_1} \big(f(v) &+ f(s_{\mathbf{i}} e_1^{\mathbf{k}} s_{\mathbf{j}}^*) \big) \delta_{s_{\mathbf{i}} e_1^{\mathbf{k}} s_{\mathbf{j}}^*}  \notag \\
	&+ \sum_{\mathbf{k} \in \mathbf{I} \setminus (\mathbf{I}_0 \cup \mathbf{I}_1)} f(s_{\mathbf{ik}} s_{\mathbf{jk}}^*) \delta_{s_{\mathbf{ik}} s_{\mathbf{jk}}^*} -h \in \mc J.
	\end{align}
	Continuing inductively, we obtain
	\begin{align}\label{eq: almosthere}
	\sum_{\mathbf{k} \in \mathbf{I}_n} \left( f(v) + \sum_{l=1}^n f(s_{\mathbf{i}} e_l^{\mathbf{k}} s_{\mathbf{j}}^*) \right) \delta_{s_{\mathbf{i}} e_n^{\mathbf{k}} s_{\mathbf{j}}^*} + \sum_{\mathbf{k} \in \mathbf{I} \setminus \bigcup_{r=0}^n \mathbf{I}_r} f(s_{\mathbf{ik}} s_{\mathbf{jk}}^*) \delta_{s_{\mathbf{ik}} s_{\mathbf{jk}}^*} -h \in \mc J \qquad (n \in \mathbb{N}).
	\end{align}
	That $f$ has zero sums at $v$ is to say $f(v) = - \sum_{l \in \mathbb{N}} f(s_{\mathbf{i}} e_l^{\mathbf{k}} s_{\mathbf{j}}^*)$ for each $\mathbf{k} \in \mathbf{L}$. Therefore \eqref{eq: almosthere} is equivalent to
	\begin{align}\label{eq: final}
	- \sum_{\mathbf{k} \in \mathbf{I}_n} \left(\sum_{l>n} f(s_{\mathbf{i}} e_l^{\mathbf{k}} s_{\mathbf{j}}^*) \right) \delta_{s_{\mathbf{i}} e_n^{\mathbf{k}} s_{\mathbf{j}}^*} + \sum_{\mathbf{k} \in \mathbf{I} \setminus \bigcup_{r=0}^n \mathbf{I}_r} f(s_{\mathbf{ik}} s_{\mathbf{jk}}^*) \delta_{s_{\mathbf{ik}} s_{\mathbf{jk}}^*} -h \in \mc J \qquad (n \in \mathbb{N}).
	\end{align}
	Now $\sum_{t \in \text{Cu}_2 \setminus \{\lozenge \}} |f(t)| < \infty$ implies
	\begin{align}\label{eq: postfinal}
	\sum_{\mathbf{k} \in \mathbf{I}_n} \left| \sum_{l>n} f(s_{\mathbf{i}} e_l^{\mathbf{k}} s_{\mathbf{j}}^*) \right| \rightarrow 0 \quad \text{and} \quad \sum_{\mathbf{k} \in \mathbf{I} \setminus \bigcup_{r=0}^n \mathbf{I}_r} |f(s_{\mathbf{ik}} s_{\mathbf{jk}}^*)| \rightarrow 0 \qquad (n \rightarrow \infty).
	\end{align}
	As $\mc J$ is closed, we conclude from \eqref{eq: postfinal} and \eqref{eq: final} that $h \in \mc J$.
\end{proof}

\subsection{Representing $\mc A / \mc J$ in $\mc B (\ell^p)$}

To make further progress on understanding $\mc A/\mc J$, we will consider certain representations of this algebra, thus proving in particular that it is non-trivial.  From now on we let $\pi_{\mc J} \colon \mc A \rightarrow \mc A / \mc J$ denote the quotient map.

We will represent $\mc A/\mc J$ inside $\mc B (\ell^p(\mathbb{N}))$, the unital Banach algebra of bounded linear operators on $\ell^p(\mathbb{N})$, for any $p \in [1, \infty)$.
Let $p \in [1, \infty)$ be fixed. We first define operators $A_1, A_2, B_1, B_2$ on $\ell^p:= \ell^p(\mathbb{N})$ by
\begin{align}
(A_1 x)(n) = x_{2n}, \qquad (A_2 x)(n) = x_{2n-1} \qquad (x \in \ell^p),
\end{align}
and
\begin{align}
(B_1 x)(n) = \begin{cases} x_{n/2} &\text{if } n \in 2 \mathbb{N}, \\ 0 & \text{otherwise,} \end{cases}
\quad
(B_2 x)(n) = \begin{cases} x_{(n+1)/2} &\text{if } n \in 2 \mathbb{N}-1, \\ 0 & \text{otherwise.} \end{cases}
\qquad (x \in \ell^p).
\end{align}

It is immediate that $A_i, B_i, \in \mc B (\ell^p)$ with $\|A_i \|=1= \|B_i \|$ for $i \in \{1,2\}$. Moreover, the following relations hold:
\begin{align}
A_1 B_1 = I_{\ell^p} = A_2 B_2, \quad A_1 B_2 = 0 = A_2 B_1, \quad B_1 A_1 + B_2 A_2 = I_{\ell^p},
\end{align}
where $I_{\ell^p}$ denotes the identity operator on $\ell^p$.

\begin{remark}\label{linindep}
	Let us note that the set $\{B_1^n \colon n \in \mathbb{N}\}$ is linearly independent in $\mc B (\ell^p)$. Indeed, suppose $(\alpha_n)_{n=1}^N$ is a finite family of scalars such that $\sum_{n=1}^N \alpha_n B_1^n =0$. Let $(e_n)$ be the standard unit vector basis of $\ell^p$. We see that
	\begin{align}
	0= \sum_{n=1}^N \alpha_n B_1^n e_1 = \sum_{n=1}^N \alpha_n e_{2^n},
	\end{align}
	hence $\alpha_n =0$ must hold whenever $1 \leqslant n \leqslant N$.
\end{remark}

\begin{proposition}\label{p: rep_quot}
	For each $p \in [1, \infty)$ there is a continuous, unital algebra homomorphism $\Theta_p \colon \mathcal{A} / \mathcal{J} \rightarrow \mathcal{B}(\ell^p)$ with 
	\begin{align}
	\Theta_p(\pi_{\mc J}(\delta_{s_i^*}))=A_i \quad \text{and} \quad \Theta_p(\pi_{\mc J}(\delta_{s_i}))=B_i \qquad (i \in \{1,2\}).
	\end{align}
	In particular $\mc A / \mc J$ is infinite-dimensional and non-commutative.
\end{proposition}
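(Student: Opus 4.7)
The plan is to first construct a continuous unital algebra homomorphism $\tilde\Theta_p \colon \mc A \to \mc B(\ell^p)$ and show it vanishes on $\mc J$; the descended map will then be the required $\Theta_p$. Using Lemma~\ref{l: nuts_and_bolts}~(2), each $t \in \text{Cu}_2 \setminus \{\lozenge\}$ has a unique form $t = s_{\mathbf{i}} s_{\mathbf{j}}^*$ with $\mathbf{i}=(i_1,\ldots,i_n)$ and $\mathbf{j}=(j_1,\ldots,j_m)$ in $\mathbf{I}$; I would set
\begin{equation*}
\tilde\Theta_p(\delta_t) := B_{i_1} \cdots B_{i_n}\, A_{j_m} \cdots A_{j_1} \in \mc B(\ell^p),
\end{equation*}
the reversal in the $A$-factors corresponding to $s_{\mathbf{j}}^* = s_{j_m}^* \cdots s_{j_1}^*$. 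Each such image has operator norm at most one, so the assignment extends uniquely to a contractive linear map on all of $\mc A$ with $\tilde\Theta_p(\delta_e) = I_{\ell^p}$. The key verification is that $\tilde\Theta_p$ is multiplicative on basis elements: for $s = s_{\mathbf{i}}s_{\mathbf{j}}^*$ and $t = s_{\mathbf{m}}s_{\mathbf{n}}^*$, the product $\tilde\Theta_p(\delta_s)\tilde\Theta_p(\delta_t)$ hinges on simplifying the middle factor $A_{j_m}\cdots A_{j_1} B_{m_1}\cdots B_{m_r}$ via iterated application of $A_i B_j = \delta_{ij} I_{\ell^p}$; this mirrors exactly Lemma~\ref{l: nuts_and_bolts}~(1) applied to $s_{\mathbf{j}}^* s_{\mathbf{m}}$, and a case analysis against Lemma~\ref{l: cu2_combi} confirms equality with $\tilde\Theta_p(\delta_s \# \delta_t)$. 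Full multiplicativity on $\mc A$ then follows by continuity and absolute convergence of $\ell^1$ expansions.

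Having $\tilde\Theta_p$ in hand as a unital contractive algebra homomorphism, the identity $B_1 A_1 + B_2 A_2 = I_{\ell^p}$ displayed just before Remark~\ref{linindep} yields $\tilde\Theta_p(f_0) = 0$ directly. Since $\mc J$ is the \emph{closed} two-sided ideal generated by $f_0$ and $\tilde\Theta_p$ is continuous, $\tilde\Theta_p$ annihilates $\mc J$ and descends to the desired $\Theta_p \colon \mc A/\mc J \to \mc B(\ell^p)$ with the stated values on $\pi_{\mc J}(\delta_{s_i^*})$ and $\pi_{\mc J}(\delta_{s_i})$. For the ``in particular'' part, I would use $\Theta_p$ as a detector: Remark~\ref{linindep} supplies the linear independence of $\{B_1^n\}_{n \in \mathbb{N}}$ in $\mc B(\ell^p)$, and since $\Theta_p(\pi_{\mc J}(\delta_{s_1^n})) = B_1^n$, the family $\{\pi_{\mc J}(\delta_{s_1^n})\}_{n \in \mathbb{N}}$ is linearly independent in $\mc A/\mc J$, so the quotient is infinite-dimensional. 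Non-commutativity follows because $\pi_{\mc J}(\delta_{s_2^*})\pi_{\mc J}(\delta_{s_1}) = 0$ (as $s_2^* s_1 = \lozenge$), whereas $\Theta_p(\pi_{\mc J}(\delta_{s_1})\pi_{\mc J}(\delta_{s_2^*})) = B_1 A_2$, which is non-zero (it maps the first standard basis vector to the second).

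The principal technical obstacle is the multiplicativity verification for $\tilde\Theta_p$: conceptually it is just a faithful translation of the combinatorial identities of Lemma~\ref{l: cu2_combi} into operator identities via $A_i B_j = \delta_{ij} I_{\ell^p}$, but careful bookkeeping is needed to reconcile the forward concatenation of index tuples in $s_{\mathbf{i}}$ with the reversal induced by the involution in $s_{\mathbf{j}}^*$, so that each of the six cases of Lemma~\ref{l: cu2_combi} is matched by the corresponding operator identity cleanly.
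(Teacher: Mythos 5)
Your proposal is correct and follows essentially the same route as the paper: the paper obtains your map $\tilde\Theta_p$ by invoking the universal property of the presentation of $\text{Cu}_2$ (the operators satisfy the defining relations, so a contractive semigroup homomorphism $\phi_p\colon\text{Cu}_2\to\mc B(\ell^p)$ exists, which then extends to $\ell^1$), whereas you verify multiplicativity on basis elements by hand via Lemma~\ref{l: cu2_combi} --- a presentational difference only. The remainder (killing $f_0$ via $B_1A_1+B_2A_2=I_{\ell^p}$, descending to the quotient, and detecting infinite-dimensionality through $\{B_1^n\}$) matches the paper, and your explicit non-commutativity witness $B_1A_2\neq 0$ versus $s_2^*s_1=\lozenge$ is a correct filling-in of what the paper leaves as ``clear''.
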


\begin{proof}
	Fix a $p \in [1, \infty)$. The operators $A_1, A_2, B_1, B_2 \in \mc B (\ell^p)$ are subject to the relations $A_1 B_1= I_{\ell^p} = A_2 B_2$ and $A_2 B_1 = 0 =A_1 B_2$, hence there is a unique semigroup homomorphism 
	\begin{align}
	\phi_p \colon \text{Cu}_2 \rightarrow \mc B (\ell^p)
	\end{align}
	which satisfies $\phi_p(s_1^*)=A_1$, $\phi_p(s_1)=B_1$, $\phi_p(s_2^*)=A_2$ and $\phi_p(s_2)=B_2$. Notice that in particular $\phi_p(e)= \phi_p(s_1^* s_1) = \phi_p(s_1^*) \phi_p(s_1) =A_1 B_1 = I_{\ell^p}$ and $\phi_p(\lozenge) = \phi_p(s_1^* s_2)= \phi_p(s_1^*) \phi_p(s_2)=A_1 B_2=0$. By Lemma~\ref{l: nuts_and_bolts} (2), and because the operators $A_1, A_2, B_1, B_2 \in \mc B (\ell^p)$ have norm one, we see that $\|\phi_p(t)\| \leqslant 1$ for every $t \in \text{Cu}_2$.
	
	It follows that there is a unique continuous algebra homomorphism
	\begin{align}
	\theta_p \colon \mc A = (\ell^1(\text{Cu}_2 \setminus \{ \lozenge \}), \#) \rightarrow \mc B (\ell^p)
	\end{align}
	such that $\| \theta_p \| \leqslant 1$ and $\theta_p(\delta_t) = \phi_p(t)$ for all $t \in \text{Cu}_2 \setminus \{ \lozenge\}$.	
	
	In particular $\theta_p$ is unital as $\theta_p(\delta_e)= \phi_p(e) = I_{\ell^p}$. Moreover, from the relation $B_1 A_1+B_2 A_2 = I_{\ell^p}$ we see
	\begin{align}
	\theta_p(f_0) = \theta_p(\delta_e)- \theta_p(\delta_{s_1}) \theta_p(\delta_{s_1^*}) - \theta_p(\delta_{s_2}) \theta_p(\delta_{s_2^*}) = I_{\ell^p} - B_1 A_1 - B_2 A_2 =0,
	\end{align}
	consequently $\mc J \subseteq \Ker(\theta_p)$. Therefore there is a unique continuous algebra homomorphism
	\begin{align}
	\Theta_p \colon \mc A / \mc J \rightarrow \mc B (\ell^p)
	\end{align}
	with $\| \Theta_p \| \leqslant 1$ such that $\Theta_p \circ \pi_{\mc J} = \theta_p$, where $\pi_{\mc J} \colon \mc A \rightarrow \mc A / \mc J$ is the quotient map. Clearly $\Theta_p(\pi_{\mc J}(\delta_t))= \theta_p(\delta_t)= \phi_p(t)$ for each $t \in \text{Cu}_2 \setminus \{ \lozenge \}$. Consequently	the required relations hold.
	
	Let us show that $\mc A / \mc J$ is infinite-dimensional. We observe that
	\begin{align}
	\{B_1^n \colon n \in \mathbb N\} = \{ \Theta_p (\pi_{\mc J}(\delta_{s_1^n})) \colon n \in \mathbb{N}\} \subseteq \Ran(\Theta_p),
	\end{align}
	and hence $\Ran(\Theta_p)$ is infinite-dimensional by Remark \ref{linindep}. From this it readily follows that $\mc A / \mc J$ is infinite-dimensional too.
	
	Finally, it is clear that $\mc A / \mc J$ is non-commutative.
\end{proof}

\begin{remark}
	It is obvious that the continuous homomorphism $\theta_p \colon \mc A \to \mc B(\ell^p)$ in the proof above is not injective for any $p \in [1, \infty)$. We remark in passing however, that it is possible to find (even explicitly construct) a continuous, unital, faithful $*$-homomorphism $\mc A \to \mc B(\ell^2)$; see \cite[Remark~3.16]{dlr}.
\end{remark}

The following is our main result for showing that $\mc A/\mc J$ is purely infinite.

\begin{theorem}\label{t: main_new}
	Let $f \in \mc A$. Then the following are equivalent:
	\begin{enumerate}
		\item\label{t:main_new:one} $f \in \mc J$;
		\item\label{t:main_new:two} $f$ has zero sums at every $v \in \text{Cu}_2 \setminus \{\lozenge \}$ without symmetric core;
		\item\label{t:main_new:three} There are no $g,h \in \mc A$ with $g \# f \# h = \delta_e$.
	\end{enumerate}
	In particular $\mc J = \mc{M}_{\mc A}$ and hence $\mc J$ is the unique maximal ideal in $\mc A$.
\end{theorem}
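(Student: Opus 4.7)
The plan is to prove the cycle $(2) \Rightarrow (1) \Rightarrow (3) \Rightarrow (2)$, from which the ``in particular'' assertion drops out of the equivalence $(1) \Leftrightarrow (3)$ together with Proposition~\ref{p: uniquemax}.

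The direction $(1) \Rightarrow (3)$ is immediate: if $f \in \mc J$ and $g \# f \# h = \delta_e$ for some $g, h \in \mc A$, then $\delta_e \in \mc J$ (as $\mc J$ is two-sided), forcing $\mc J = \mc A$ and contradicting the fact that $\mc A / \mc J$ is (infinite-dimensional, hence) non-trivial by Proposition~\ref{p: rep_quot}. For $(2) \Rightarrow (1)$, I would use Lemma~\ref{l: partition} to decompose
\begin{align*}
f \ = \ \sum_v h_v, \qquad h_v \ := \ \sum_{t \in S_v} f(t) \delta_t,
\end{align*}
where the outer sum runs over $v \in \text{Cu}_2 \setminus \{\lozenge\}$ without symmetric core. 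Since the $\{S_v\}$ partition $\text{Cu}_2 \setminus \{\lozenge\}$, this decomposition is absolutely convergent in $\mc A$ with $\sum_v \|h_v\| = \|f\|$; moreover each $h_v$ lies in $\mc J$ by Proposition~\ref{p: main2}, as $f$ has zero sums at $v$ by hypothesis. Closedness of $\mc J$ then gives $f \in \mc J$.

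The main work lies in $(3) \Rightarrow (2)$, which I would prove by contrapositive. Suppose $f$ fails to have zero sums at some $v = s_{\mathbf{i}} s_{\mathbf{j}}^*$ without symmetric core, witnessed by $\mathbf{n} \in \mathbf L$. Set $g_0 := \delta_{s_{\mathbf{i}}^*} \# f \# \delta_{s_{\mathbf{j}}} \in \mc A$; the key claim is that
\begin{align*}
g_0(e_l^{\mathbf{n}}) \ = \ f(v_l^{\mathbf{n}}) \qquad (l \in \mathbb{N}_0).
\end{align*}
Granting this, $g_0$ fails to have zero sums at $e$ along the same $\mathbf{n}$, so Proposition~\ref{p: main1} supplies $\alpha, \beta \in \mc A$ with $\alpha \# g_0 \# \beta = \delta_e$, and then $g := \alpha \# \delta_{s_{\mathbf{i}}^*}$ and $h := \delta_{s_{\mathbf{j}}} \# \beta$ give $g \# f \# h = \delta_e$, contradicting (3). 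To prove the claim I would expand, using \eqref{likeconvbutitsnot}, as
\begin{align*}
g_0(e_l^{\mathbf{n}}) \ = \ \sum_{\substack{\mathbf{c},\mathbf{d} \in \mathbf{I} \\ s_{\mathbf{i}}^* s_{\mathbf{c}} s_{\mathbf{d}}^* s_{\mathbf{j}} = e_l^{\mathbf{n}}}} f(s_{\mathbf{c}} s_{\mathbf{d}}^*),
\end{align*}
and enumerate the solutions via Lemma~\ref{l: cu2_combi}. For $l \geqslant 1$ the symmetric element $s_{\mathbf{n}_l} s_{\mathbf{n}_l}^*$ has both halves non-empty, so of the five cases only case~3 applies, yielding the unique solution $(\mathbf{c}, \mathbf{d}) = (\mathbf{i}\mathbf{n}_l, \mathbf{j}\mathbf{n}_l)$. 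For $l = 0$ the ``consequently'' clause of Lemma~\ref{l: cu2_combi} reduces matters to pairs $(\mathbf{c}, \mathbf{d})$ with $\mathbf{i} = \mathbf{c}\mathbf{k}$ and $\mathbf{j} = \mathbf{d}\mathbf{k}$ for a common suffix $\mathbf{k} \in \mathbf{I}$, and the no-symmetric-core hypothesis forces $\mathbf{k} = \emptyset$, leaving only $(\mathbf{c}, \mathbf{d}) = (\mathbf{i}, \mathbf{j})$.

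Finally, the ``in particular'' statement is automatic: $(1) \Leftrightarrow (3)$ gives $\mc J = \mc M_{\mc A}$, so $\mc M_{\mc A}$ is closed under addition (being an ideal), and Proposition~\ref{p: uniquemax} identifies it as the unique maximal two-sided ideal of $\mc A$. The principal obstacle is the combinatorial case analysis in $(3) \Rightarrow (2)$: the no-symmetric-core hypothesis is precisely what is needed to rule out extraneous common suffixes of $\mathbf{i}$ and $\mathbf{j}$ which would otherwise contribute spurious terms to $g_0(e)$ and destroy the crucial identity $g_0(e) = f(v)$.
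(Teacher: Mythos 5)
Your proposal is correct and follows essentially the same route as the paper: the contrapositive of $(3)\Rightarrow(2)$ via $\delta_{s_{\mathbf{i}}^*}\# f\#\delta_{s_{\mathbf{j}}}$ and Proposition~\ref{p: main1}, the partition of Lemma~\ref{l: partition} together with Proposition~\ref{p: main2} for $(2)\Rightarrow(1)$, and non-triviality of $\mc A/\mc J$ (Proposition~\ref{p: rep_quot}) for $(1)\Rightarrow(3)$. Your explicit termwise identity $g_0(e_l^{\mathbf{n}})=f(v_l^{\mathbf{n}})$ and the observation that the no-symmetric-core hypothesis is needed only at $l=0$ merely unpack the paper's one-line appeal to Lemma~\ref{l: cu2_combi}.
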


\begin{proof}
We first show the contrapositive of ((\ref{t:main_new:three}) $\Rightarrow$ (\ref{t:main_new:two})).  So assume the opposite of (\ref{t:main_new:two}), that is, there exists $v= s_{\mathbf{i}} s_{\mathbf{j}}^* \in \text{Cu}_2 \setminus \{ \lozenge \}$ without symmetric core such that $f$ does not have zero sums at $v$.  We \textit{claim} that $\overline{f}:= \delta_{s_{\mathbf{i}}^*} \# f \# \delta_{s_{\mathbf{j}}} \in \mc A$ does not have zero sums at $e$. To see this, let us fix an $\mathbf{n} \in \mathbf{L}$. Using Lemma~\ref{l: cu2_combi} we see that
	\begin{align}
	\sum_{l \in \mathbb{N}_0} \overline{f}(e_l^{\mathbf{n}}) &= \sum_{l \in \mathbb{N}_0} \overline{f}(s_{\mathbf{n}_l} s_{\mathbf{n}_l}^*) = \sum_{l \in \mathbb{N}_0} (\delta_{s_{\mathbf{i}}^*} \# f \# \delta_{s_{\mathbf{j}}})(s_{\mathbf{n}_l} s_{\mathbf{n}_l}^*) \notag \\
	&= \sum_{l \in \mathbb{N}_0} \left( \sum\limits_{\mathbf{p}, \mathbf{q} \in \mathbf{I}} f(s_{\mathbf{p}} s_{\mathbf{q}}^*) \delta_{s_{\mathbf{i}}^* s_{\mathbf{p}} s_{\mathbf{q}}^* s_{\mathbf{j}}} \right) (s_{\mathbf{n}_l} s_{\mathbf{n}_l}^*) = \sum_{l \in \mathbb{N}_0} f(s_{\mathbf{in}_l} s_{\mathbf{jn}_l}^*) = \sum_{l \in \mathbb{N}_0} f(v_l^{\mathbf{n}}),
	\end{align}
	hence the claim follows because $f$ does not have zero sums at $v$. We can thus apply Proposition~\ref{p: main1}; there exist $\overline{g}, \overline{h} \in \mc A$ with $\delta_e = \overline{g} \# \overline{f} \# \overline{h}$. Consequently $\delta_e = (\overline{g} \# \delta_{s_{\mathbf{i}}^*}) \# f \# (\delta_{s_{\mathbf{j}}} \# \overline{h})$, verifying the negation of (\ref{t:main_new:three}).

We now show ((\ref{t:main_new:two}) $\Rightarrow$ (\ref{t:main_new:one})). Assume that $f$ has zero sums at every $v \in \text{Cu}_2 \setminus \{ \lozenge \}$ without symmetric core. We set $f_v := \sum_{t \in S_v} f(t) \delta_t$ for every $v \in \text{Cu}_2 \setminus \{\lozenge\}$ without symmetric core. As $\text{Cu}_2 \setminus \{\lozenge\}$ is countable, the set of elements without symmetric core may be enumerated as $(v_n)$. In view of Lemma~\ref{l: partition} the set $\{ S_{v_n} \colon n \in \mathbb{N} \}$ consists of mutually disjoint sets, consequently
	\begin{align}\label{eq: aux1}
	\left\| f - \sum\limits_{n=1}^N f_{v_n} \right\| = \sum\limits_{\substack{ t \in \text{Cu}_2 \setminus \{\lozenge\} \\ t \notin \cup_{n=1}^N S_{v_n} }} |f(t)| \rightarrow 0 \qquad (N \rightarrow \infty).
	\end{align}
	The convergence of the right-hand side of \eqref{eq: aux1} follows from Lemma~\ref{l: partition}; namely, that $\{ S_{v_n} \colon n \in \mathbb{N} \}$ covers $\text{Cu}_2 \setminus \{\lozenge\}$. This shows $f \in \overline{\spanning} \{ f_{v_n} \colon n \in \mathbb{N} \}$. Proposition~\ref{p: main2} yields however $f_{v_n} \in \mc J$ for each $n \in \mathbb{N}$. Thus $f \in \mc J$. 

Finally, we show that ((\ref{t:main_new:one}) $\Rightarrow$ (\ref{t:main_new:three})).  Suppose $f \in \mc J$. Assume towards a contradiction that there are $g,h \in \mc A$ with $g \# f \# h = \delta_e$, then $\delta_e \in \mc J$. This is impossible as $\mc A / \mc J$ is non-trivial (in fact infinite-dimensional) by Proposition~\ref{p: rep_quot}.
	
	The equivalence ((\ref{t:main_new:one}) $\Leftrightarrow$ (\ref{t:main_new:three})) shows $\mc J = \mc{M}_{\mc A}$, hence the last part of the theorem follows from Proposition~\ref{p: uniquemax}.
\end{proof}

\begin{corollary}\label{c: almostpi}
	Let $a \in \mc A / \mc J$ be non-zero. Then there exist $b,c \in \mc A / \mc J$ such that $bac =1_{\mc A / \mc J}$.
\end{corollary}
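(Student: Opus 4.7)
The corollary is an immediate consequence of the equivalence \((\ref{t:main_new:one}) \Leftrightarrow (\ref{t:main_new:three})\) in Theorem~\ref{t: main_new}. The plan is as follows.

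First, given a nonzero $a \in \mc A / \mc J$, I would lift it to the predual algebra: choose any $f \in \mc A$ with $\pi_{\mc J}(f) = a$. Since $a \neq 0$, we have $f \notin \mc J$.

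Next, I would invoke the contrapositive of ((\ref{t:main_new:one}) $\Rightarrow$ (\ref{t:main_new:three})) from Theorem~\ref{t: main_new}, which asserts that if $f \notin \mc J$ then there exist $g, h \in \mc A$ with $g \# f \# h = \delta_e$. Applying the quotient map $\pi_{\mc J}$ (which is multiplicative and unital) to this equality gives
\[
\pi_{\mc J}(g) \cdot a \cdot \pi_{\mc J}(h) = \pi_{\mc J}(\delta_e) = 1_{\mc A / \mc J}.
\]
Setting $b := \pi_{\mc J}(g)$ and $c := \pi_{\mc J}(h)$ completes the proof.

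There is no real obstacle here; all of the work has already been done in Theorem~\ref{t: main_new}. The corollary is just the explicit statement that the pure infiniteness property, established via the existence of factorisations of $\delta_e$ in $\mc A$ through non-ideal elements, descends to the quotient $\mc A / \mc J$.
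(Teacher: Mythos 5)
Your proposal is correct and is essentially identical to the paper's own proof: lift $a$ to some $f \in \mc A \setminus \mc J$, apply Theorem~\ref{t: main_new} to obtain $g, h \in \mc A$ with $g \# f \# h = \delta_e$, and push down through $\pi_{\mc J}$. One small labelling slip: the implication you need ($f \notin \mc J$ implies a factorisation of $\delta_e$ exists) is the contrapositive of ((\ref{t:main_new:three}) $\Rightarrow$ (\ref{t:main_new:one})), not of ((\ref{t:main_new:one}) $\Rightarrow$ (\ref{t:main_new:three})), but since the theorem gives the full equivalence this is immaterial.
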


\begin{proof}
	Let $f \in \mc A$ be such that $a= \pi_{\mc J}(f)$. That $a$ is non-zero is equivalent to $f \notin \mc J$. Hence by Theorem~\ref{t: main_new} there are $g,h \in \mc A$ such that $g \# f \# h = \delta_e$. Setting $b:= \pi_{\mc J}(g)$ and $c:= \pi_{\mc J}(h)$ finishes the proof.
\end{proof}

\begin{theorem}\label{t: quot_pi}
	$\mc A / \mc J$ is an infinite-dimensional, purely infinite Banach $*$-algebra.
\end{theorem}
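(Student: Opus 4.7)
The plan is to observe that essentially all of the work has been done in the preceding material, so this theorem becomes an assembly of earlier results. I would proceed in three short steps, matching the three claims in the statement.

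First, I would handle the $*$-algebra structure. We already noted, just after \eqref{eq: elements_of_j_1}, that $f_0^* = f_0$ forces $\mc J$ to be a $*$-ideal in the Banach $*$-algebra $\mc A$. Since the involution on $\mc A$ is isometric, the quotient $\mc A/\mc J$ inherits the structure of a Banach $*$-algebra with isometric involution (concretely, defining $\pi_{\mc J}(f)^* := \pi_{\mc J}(f^*)$ is well-posed and norm-preserving).

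Second, I would establish that $\mc A / \mc J$ is infinite-dimensional. This is already stated in Proposition~\ref{p: rep_quot}: for any $p \in [1,\infty)$ the continuous unital homomorphism $\Theta_p \colon \mc A/\mc J \to \mc B(\ell^p)$ has range containing $\{B_1^n : n \in \mathbb N\}$, a linearly independent set by Remark~\ref{linindep}, so $\dim(\mc A/\mc J) = \infty$. In particular $\mc A/\mc J \not\cong \mathbb C$, so it is not a division algebra.

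Finally, I would verify pure infiniteness via the definition recorded in Section~\ref{sec:pi}: given any non-zero $a \in \mc A/\mc J$, Corollary~\ref{c: almostpi} supplies $b,c \in \mc A/\mc J$ with $b a c = 1_{\mc A/\mc J}$. Combined with $\mc A/\mc J \not\cong \mathbb C$, this is exactly the definition of being purely infinite, completing the proof. There is no real obstacle here; the combinatorial heart of the argument was absorbed into Theorem~\ref{t: main_new} and its corollary, and the existence of a faithful enough quotient representation to rule out $\mc A/\mc J = 0$ (and indeed rule out $\mc A/\mc J \cong \mathbb C$) was carried out in Proposition~\ref{p: rep_quot}.
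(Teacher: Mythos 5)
Your proof is correct and is essentially identical to the paper's: the theorem is deduced directly from Corollary~\ref{c: almostpi} (which gives the factorisation $bac=1_{\mc A/\mc J}$ for non-zero $a$) and Proposition~\ref{p: rep_quot} (which gives infinite-dimensionality, hence rules out a division algebra via Gel'fand--Mazur), with the Banach $*$-algebra structure having already been observed after \eqref{eq: elements_of_j_1}. No gaps.
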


\begin{proof}
	This is immediate from Corollary~\ref{c: almostpi} and Proposition~\ref{p: rep_quot}.
\end{proof}

\subsubsection{A description of the annihilator $\mc J^{\perp}$}

Let us start by pushing the characterisation of $\mc J$ given by \eqref{eq: elements_of_j_1} a bit further:

\begin{lemma}\label{l: alternative_j}
The following holds:
	\begin{align}
	\mc J = \overline{\spanning} \{\delta_{s_{\mathbf{i}}} \# f_0 \# \delta_{s_{\mathbf{j}}^*} \colon \mathbf{i}, \mathbf{j} \in \mathbf{I} \}.
	\end{align}
\end{lemma}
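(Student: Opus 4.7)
The plan is to exploit the explicit characterisation of $\mc J$ from~\eqref{eq: elements_of_j_1}, which says $\mc J$ is the closed linear span of elements of the form $\delta_{s_{\mathbf{i}} s_{\mathbf{k}}^*} \# f_0 \# \delta_{s_{\mathbf{l}} s_{\mathbf{j}}^*}$ with $\mathbf{i}, \mathbf{j}, \mathbf{k}, \mathbf{l} \in \mathbf{I}$. The inclusion $\supseteq$ of the claimed identity is immediate by specialising $\mathbf{k} = \mathbf{l} = \emptyset$, so one only needs to establish $\subseteq$, and for this it suffices to show that every such generator already lies in the right-hand side.

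The key observation is that each generator collapses to either $0$ or $\delta_{s_{\mathbf{i}}} \# f_0 \# \delta_{s_{\mathbf{j}}^*}$, depending on whether the ``inner indices'' $\mathbf{k}$ and $\mathbf{l}$ are empty. First I would use that $s_{\mathbf{a}} s_{\mathbf{b}}^* \neq \lozenge$ for all $\mathbf{a}, \mathbf{b} \in \mathbf{I}$ (from Lemma~\ref{l: nuts_and_bolts}~(2)) together with~\eqref{likeconvbutitsnot} to factor
\[
\delta_{s_{\mathbf{i}} s_{\mathbf{k}}^*} \# f_0 \# \delta_{s_{\mathbf{l}} s_{\mathbf{j}}^*} = \delta_{s_{\mathbf{i}}} \# \left( \delta_{s_{\mathbf{k}}^*} \# f_0 \# \delta_{s_{\mathbf{l}}} \right) \# \delta_{s_{\mathbf{j}}^*}.
\]
It is then enough to prove the two one-sided identities $\delta_{s_{\mathbf{k}}^*} \# f_0 = f_0$ when $\mathbf{k} = \emptyset$ and $\delta_{s_{\mathbf{k}}^*} \# f_0 = 0$ when $\mathbf{k} \in \mathbf{I} \setminus \mathbf{I}_0$, and symmetrically $f_0 \# \delta_{s_{\mathbf{l}}} = f_0$ when $\mathbf{l} = \emptyset$ and $f_0 \# \delta_{s_{\mathbf{l}}} = 0$ when $\mathbf{l} \in \mathbf{I} \setminus \mathbf{I}_0$.

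For the first identity, writing $\mathbf{k} = (k_1) \mathbf{k}'$ with $k_1 \in \{1,2\}$, I would expand $\delta_{s_{\mathbf{k}}^*} \# f_0 = \delta_{s_{\mathbf{k}}^*} - \delta_{s_{\mathbf{k}}^* s_1 s_1^*} - \delta_{s_{\mathbf{k}}^* s_2 s_2^*}$ and invoke Lemma~\ref{l: nuts_and_bolts}~(1): for $i = k_1$ one computes $s_{\mathbf{k}}^* s_i s_i^* = s_{\mathbf{k}'}^* s_{k_1}^* = s_{\mathbf{k}}^*$, while for $i \neq k_1$ one has $s_{\mathbf{k}}^* s_i = \lozenge$. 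Hence two terms cancel the leading $\delta_{s_{\mathbf{k}}^*}$ and the third vanishes, yielding $0$. The case $\mathbf{k} = \emptyset$ is trivial, and the argument for $f_0 \# \delta_{s_{\mathbf{l}}}$ is entirely symmetric (using the first clause of Lemma~\ref{l: nuts_and_bolts}~(1) in place of the second). Combining these facts with the factorisation above, every generator from~\eqref{eq: elements_of_j_1} equals either $0$ or $\delta_{s_{\mathbf{i}}} \# f_0 \# \delta_{s_{\mathbf{j}}^*}$, which gives $\subseteq$ and hence the desired equality. I do not expect any substantive obstacle: the whole argument is a careful bookkeeping exercise with the first part of Lemma~\ref{l: nuts_and_bolts}, with the only subtlety being the case split on the leading entry of $\mathbf{k}$ (respectively $\mathbf{l}$).
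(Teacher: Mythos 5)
Your proposal is correct and follows essentially the same route as the paper: the paper's proof likewise reduces, via \eqref{eq: elements_of_j_1}, to showing $\delta_{s_{\mathbf{k}}^*} \# f_0 = 0$ for $\mathbf{k} \in \mathbf{I} \setminus \mathbf{I}_0$ (and symmetrically $f_0 \# \delta_{s_{\mathbf{l}}} = 0$) by the same case split on the leading entry of $\mathbf{k}$ using Lemma~\ref{l: nuts_and_bolts}~(1). The only cosmetic difference is that the paper phrases the dichotomy as ``$s_{\mathbf{k}}^* s_1 = \lozenge$ and $s_{\mathbf{k}}^* s_2 = s_{\mathbf{p}}^*$ with $\mathbf{k} = 2\mathbf{p}$, or vice versa'' rather than your $\mathbf{k} = (k_1)\mathbf{k}'$ notation; the computation is identical.
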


\begin{proof}
	Let us fix $\mathbf{k} \in \mathbf{I} \setminus \mathbf{I}_0$. In view of Lemma~\ref{l: nuts_and_bolts} (1) we have either 
	\begin{itemize}
		\item $s_{\mathbf{k}}^* s_1= \lozenge$ and $s_{\mathbf{k}}^* s_2= s_{\mathbf{p}}^*$, where $\mathbf{p} \in \mathbf{I}$ is such that $\mathbf{k}=2 \mathbf{p}$; or
		\item $s_{\mathbf{k}}^* s_2= \lozenge$ and $s_{\mathbf{k}}^* s_1= s_{\mathbf{q}}^*$, where $\mathbf{q} \in \mathbf{I}$ is such that $\mathbf{k}=1 \mathbf{q}$.
	\end{itemize}
	We may assume without loss of generality that the first item holds. Consequently
	\begin{align}\label{e: alternative_j_calc}
	\delta_{s_{\mathbf{k}}^*} \# f_0 &= \delta_{s_{\mathbf{k}}^*} - \delta_{s_{\mathbf{k}}^*} \# \delta_{s_1 s_1^*} - \delta_{s_{\mathbf{k}}^*} \# \delta_{s_2 s_2^*}
	= \delta_{s_{\mathbf{k}}^*} - 0 - \delta_{s_{\mathbf{p}}^* s_2^*} = 0.
	\end{align}
	With an entirely analogous argument we can show $f_0 \# \delta_{s_{\mathbf{l}}}=0$ for any $\mathbf{l} \in \mathbf{I} \setminus \mathbf{I}_0$.
	
	Hence from \eqref{eq: elements_of_j_1} and the above we conclude
	\begin{align}
	\mc J &= \overline{\spanning} \{\delta_{s_{\mathbf{i}} s_{\mathbf{k}}^*} \# f_0 \# \delta_{s_{\mathbf{l}} s_{\mathbf{j}}^*} \colon \mathbf{i}, \mathbf{j}, \mathbf{k}, \mathbf{l} \in \mathbf{I} \} = \overline{\spanning} \{\delta_{s_{\mathbf{i}}} \# f_0 \# \delta_{s_{\mathbf{j}}^*} \colon \mathbf{i}, \mathbf{j} \in \mathbf{I} \},
	\end{align}
	as required.
\end{proof}

Let us define the maps
\begin{align}
\tau_k \colon \text{Cu}_2 \setminus \{\lozenge\} \to \text{Cu}_2 \setminus \{\lozenge\}; \quad s_{\mathbf{i}} s_{\mathbf{j}}^* \mapsto s_{\mathbf{i}k} s_{\mathbf{j}k}^* \qquad (k \in \{1,2\}).
\end{align}
Lemma~\ref{l: nuts_and_bolts} (2) ensures that $\tau_k$ is in fact well-defined. By the very same result we actually find that $\tau_k$ is injective.

For both $k \in \{1,2\}$, we can find ``induced'' bounded linear operators
\begin{align}\label{eq: t_k}
T_k \colon \mc A \to \mc A \quad \text{with} \quad T_k(\delta_t) = \delta_{\tau_k(t)} \qquad (t \in \text{Cu}_2 \setminus \{\lozenge\}).
\end{align}
From injectivity of $\tau_k$ it easily follows that $T_k$ is an isometry. Let $T:= T_1 + T_2$. Then $T$ is a bounded linear operator on $\mc A$.

In the following, $\mc A^*$ denotes the (continuous) dual of $\mc A$, which we identify with $\ell^{\infty}(\text{Cu}_2 \setminus \{\lozenge\})$ as a Banach space. Let $T^* \colon \mc A^* \to \mc A^*$ denote the adjoint of $T$. We observe that
\begin{align}\label{eq: t_star_action}
(T^* \mu)(s_{\mathbf{i}} s_{\mathbf{j}}^*) &= \langle \delta_{s_{\mathbf{i}} s_{\mathbf{j}}^*}, T^* \mu \rangle = \langle T \delta_{s_{\mathbf{i}} s_{\mathbf{j}}^*}, \mu \rangle = \langle T_1 \delta_{s_{\mathbf{i}} s_{\mathbf{j}}^*}, \mu \rangle + \langle T_2 \delta_{s_{\mathbf{i}} s_{\mathbf{j}}^*}, \mu \rangle \notag \\
&= \langle \delta_{ \tau_1(s_{\mathbf{i}} s_{\mathbf{j}}^*)}, \mu \rangle + \langle \delta_{ \tau_2(s_{\mathbf{i}} s_{\mathbf{j}}^*)}, \mu \rangle = \langle \delta_{s_{\mathbf{i}1} s_{\mathbf{j}1}^*}, \mu \rangle + \langle \delta_{s_{\mathbf{i}2} s_{\mathbf{j}2}^*}, \mu \rangle \notag \\
&= \mu (s_{\mathbf{i}1} s_{\mathbf{j}1}^*) + \mu (s_{\mathbf{i}2} s_{\mathbf{j}2}^*) \qquad (\mathbf{i}, \mathbf{j} \in \mathbf{I}).
\end{align}
We recall that the \textit{annihilator} of $\mc J$ is $\mc J^{\perp}:= \{\mu \in \mc A^* \colon \langle f, \mu \rangle =0 \text{ for all } f \in \mc J \}$. In what follows, $I_{\mc A}$ denotes the identity operator on $\mc A$.

\begin{lemma}\label{l: props_j_perp}
    The following hold:
	\begin{enumerate}
		\item[(1)] $\mc J  = \overline{\Ran}(I_{\mc A} - T)$, and
		\item[(2)] $\mc J^{\perp} = \{\mu \in \mc A^* \colon T^* \mu = \mu \}$.
	\end{enumerate}
\end{lemma}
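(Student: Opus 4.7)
The plan is to establish (1) by a direct computation identifying the image of the canonical basis elements under $I_{\mc A}-T$ with the generators of $\mc J$ supplied by Lemma~\ref{l: alternative_j}, and then to deduce (2) from (1) by the standard Banach-space duality identity relating the annihilator of a range to the kernel of an adjoint.

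For (1), I would first compute, for arbitrary $\mathbf{i},\mathbf{j}\in\mathbf{I}$,
\begin{align}
(I_{\mc A}-T)(\delta_{s_{\mathbf{i}}s_{\mathbf{j}}^*}) = \delta_{s_{\mathbf{i}}s_{\mathbf{j}}^*} - \delta_{s_{\mathbf{i}1}s_{\mathbf{j}1}^*} - \delta_{s_{\mathbf{i}2}s_{\mathbf{j}2}^*} = \delta_{s_{\mathbf{i}}} \# f_0 \# \delta_{s_{\mathbf{j}}^*},
\end{align}
where the first equality uses the definition of $T_k$ in \eqref{eq: t_k} together with the formula for $\tau_k$, and the second equality is the computation \eqref{eq: inideal} with $\mathbf{m}=\emptyset$ (or a direct expansion of $f_0$ using \eqref{likeconvbutitsnot}). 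By Lemma~\ref{l: alternative_j},
\begin{align}
\mc J = \overline{\spanning}\{ (I_{\mc A}-T)(\delta_{s_{\mathbf{i}}s_{\mathbf{j}}^*}) \colon \mathbf{i},\mathbf{j}\in\mathbf{I}\}.
\end{align}
Because $\spanning\{\delta_t \colon t\in\text{Cu}_2\setminus\{\lozenge\}\}$ is dense in $\mc A$ and $I_{\mc A}-T$ is bounded, the image of this dense subspace is dense in $\Ran(I_{\mc A}-T)$, so taking closures gives $\mc J = \overline{\Ran}(I_{\mc A}-T)$.

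For (2), I would invoke the standard duality identity $(\overline{\Ran}\,S)^{\perp} = \Ker(S^*)$, valid for any bounded operator $S$ on a Banach space (the equivalence $\langle Sa,\mu\rangle=0$ for all $a$ iff $\langle a,S^*\mu\rangle=0$ for all $a$ iff $S^*\mu=0$ is immediate from the definition of the adjoint). Applying it to $S = I_{\mc A}-T$ and using part (1),
\begin{align}
\mc J^{\perp} = (\overline{\Ran}(I_{\mc A}-T))^{\perp} = \Ker(I_{\mc A^*}-T^*) = \{\mu\in\mc A^* \colon T^*\mu=\mu\},
\end{align}
which is (2).

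There is no real obstacle here: the whole proof rests on spotting that $(I_{\mc A}-T)\delta_{s_{\mathbf{i}}s_{\mathbf{j}}^*}$ coincides with the generator $\delta_{s_{\mathbf{i}}} \# f_0 \# \delta_{s_{\mathbf{j}}^*}$ already identified in Lemma~\ref{l: alternative_j}, after which (1) is a density argument and (2) is formal duality.
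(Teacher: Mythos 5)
Your proof is correct and follows essentially the same route as the paper: part (1) is the observation that $(I_{\mc A}-T)\delta_{s_{\mathbf{i}}s_{\mathbf{j}}^*}=\delta_{s_{\mathbf{i}}}\# f_0\#\delta_{s_{\mathbf{j}}^*}$ combined with Lemma~\ref{l: alternative_j} (which the paper declares ``immediate''), and part (2) is the duality identity $(\overline{\Ran}\,S)^{\perp}=\Ker(S^*)$, which the paper simply writes out in both directions rather than citing. No gaps.
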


\begin{proof}
	That $\mc J  =\overline{\Ran}(I_{\mc A} - T)$ is immediate from Lemma~\ref{l: alternative_j} and \eqref{eq: t_k}.
	
	We now prove (2). Let us fix $\mu \in \mc A^*$. Suppose first $T^* \mu = \mu$. Then $\langle f, \mu \rangle = \langle f, T^* \mu \rangle = \langle Tf, \mu \rangle$ or equivalently $\langle f- Tf, \mu \rangle =0$ for every $f \in \mc A$. Hence by continuity, $\langle g, \mu \rangle =0$ for every $g \in \overline{\Ran}(I_{\mc A} - T)$. By (1) this is equivalent to $\mu \in \mc J^{\perp}$.
	
	In the other direction suppose $\mu \in \mc J^{\perp}$. By (1) we clearly have $f-Tf \in \overline{\Ran}(I_{\mc A} - T) = \mc J$, and hence $\langle f -Tf, \mu \rangle=0$ or equivalently $\langle f, \mu \rangle= \langle Tf, \mu \rangle = \langle f, T^*\mu \rangle$ for each $f \in \mc A$. Thus $T^* \mu = \mu$.
\end{proof}

\subsubsection{$\mc A / \mc J$ does not have purely infinite ultrapowers}

\begin{proposition}\label{p: pre_not_bdd_below}
	Let $F \subseteq \mathbf{I}$ be finite with $\emptyset \notin F$, and set $f:= \sum_{\mathbf{i} \in F} \delta_{s_{\mathbf{i}}^*}$. Then $\| \pi_{\mc J}(f) \| = |F|$.
\end{proposition}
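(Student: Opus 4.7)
The plan is to prove the two inequalities separately. The upper bound $\|\pi_{\mc J}(f)\| \leqslant \|f\| = |F|$ is immediate since the elements $\{s_{\mathbf{i}}^* : \mathbf{i} \in F\} \subseteq \text{Cu}_2 \setminus \{\lozenge\}$ are distinct and $\pi_{\mc J}$ is a contraction, so $\|f\|_1 = |F|$. For the reverse inequality I would use the standard isometric identification $(\mc A / \mc J)^* \cong \mc J^{\perp}$ to reduce the problem to exhibiting a functional $\mu \in \mc J^{\perp}$ with $\|\mu\|_{\infty} \leqslant 1$ and $\langle f, \mu \rangle = |F|$.

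By Lemma~\ref{l: props_j_perp}(2) together with formula~\eqref{eq: t_star_action}, producing such a $\mu$ means producing a bounded function on $\text{Cu}_2 \setminus \{\lozenge\}$ satisfying $\mu(s_{\mathbf{i}} s_{\mathbf{j}}^*) = \mu(s_{\mathbf{i} 1} s_{\mathbf{j} 1}^*) + \mu(s_{\mathbf{i} 2} s_{\mathbf{j} 2}^*)$ for all $\mathbf{i}, \mathbf{j} \in \mathbf{I}$. To make the pairing with $f$ equal to $|F|$ while maintaining $\|\mu\|_{\infty} = 1$, it is enough to arrange $\mu(s_{\mathbf{i}}^*) = 1$ for each $\mathbf{i} \in F$; since $s_{\mathbf{i}}^* = s_{\emptyset} s_{\mathbf{i}}^*$, this points towards a $\mu$ whose value depends only on the left index and is $1$ precisely when this left index is a prefix of some fixed infinite path. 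Concretely, fix any $\mathbf{k} \in \mathbf{L}$ and set
\begin{align*}
\mu(s_{\mathbf{i}} s_{\mathbf{j}}^*) := \begin{cases} 1 & \text{if } \mathbf{i} = \mathbf{k}_l \text{ for some } l \in \mathbb{N}_0, \\ 0 & \text{otherwise,} \end{cases}
\end{align*}
which is well-defined by Lemma~\ref{l: nuts_and_bolts}(2) and plainly has $\|\mu\|_{\infty} = 1$.

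The identity $T^*\mu = \mu$ follows by a two-line case analysis: if $\mathbf{i}$ is not a prefix of $\mathbf{k}$ then neither is $\mathbf{i} 1$ nor $\mathbf{i} 2$, so both sides of \eqref{eq: t_star_action} vanish; if $\mathbf{i} = \mathbf{k}_l$ then precisely one of $\mathbf{i} 1, \mathbf{i} 2$ equals $\mathbf{k}_{l+1}$, so the right-hand sum equals $1 = \mu(s_{\mathbf{i}} s_{\mathbf{j}}^*)$. Since $\mathbf{k}_0 = \emptyset$, we obtain $\mu(s_{\mathbf{i}}^*) = \mu(s_{\emptyset} s_{\mathbf{i}}^*) = 1$ for every $\mathbf{i} \in \mathbf{I}$, whence $\langle f, \mu \rangle = |F|$ and the lower bound $\|\pi_{\mc J}(f)\| \geqslant |F|$ follows. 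There is no real obstacle: the heart of the argument is the observation that every summand of $f$ has trivial left index $\emptyset$, which essentially forces the prefix-indicator functional along any infinite path $\mathbf{k} \in \mathbf{L}$ as the $T^*$-fixed point witnessing the desired norm.
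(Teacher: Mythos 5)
Your proposal is correct, and the key construction is genuinely different from (and simpler than) the one in the paper. Both arguments share the same skeleton: the upper bound is the triangle inequality plus contractivity of $\pi_{\mc J}$, and the lower bound reduces via Hahn--Banach to exhibiting a norm-one $\mu \in \mc J^{\perp}$ with $\langle f, \mu \rangle = |F|$, where membership in $\mc J^{\perp}$ is checked through the fixed-point equation $T^{*}\mu = \mu$ of Lemma~\ref{l: props_j_perp}~(2). The difference is in the witness. The paper builds an $F$-dependent functional, assigning the value $2^{-\alpha}$ to elements of the form $s_{\mathbf{i}} s_{\mathbf{kj}}^{*}$ with $\mathbf{i}, \mathbf{j} \in \mathbf{I}_{\alpha}$ and $\mathbf{k} \in F$, and the verification of $T^{*}\mu = \mu$ requires a case analysis of some length in which the hypothesis $\emptyset \notin F$ is genuinely used. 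Your $\mu$ is a single universal $\{0,1\}$-valued functional --- the indicator of those $s_{\mathbf{i}} s_{\mathbf{j}}^{*}$ whose (unique, by Lemma~\ref{l: nuts_and_bolts}~(2)) left index $\mathbf{i}$ is a prefix of a fixed $\mathbf{k} \in \mathbf{L}$ --- independent of $F$, with the fixed-point identity checked in two lines exactly as you describe: if $\mathbf{i}$ is not a prefix of $\mathbf{k}$ then neither is $\mathbf{i}1$ nor $\mathbf{i}2$, and if $\mathbf{i} = \mathbf{k}_{l}$ then exactly one of $\mathbf{i}1, \mathbf{i}2$ equals $\mathbf{k}_{l+1}$. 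Since $\mu(s_{\mathbf{i}}^{*}) = \mu(s_{\emptyset} s_{\mathbf{i}}^{*}) = 1$ for every $\mathbf{i} \in \mathbf{I}$, the pairing with $f$ is $|F|$ and the lower bound follows. What your route buys is brevity and a slightly stronger statement: your argument never uses $\emptyset \notin F$, so it shows $\|\pi_{\mc J}(f)\| = |F|$ even when $\emptyset \in F$, whereas the paper's construction is tied to that hypothesis. Nothing downstream (in particular the application in Proposition~\ref{p: not_bdd_below}, which only uses $F = \mathbf{I}_{N}$) is lost by the substitution.
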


\begin{proof}
	Clearly $\|f\| = \sum_{\mathbf{i} \in F} \| \delta_{s_{\mathbf{i}}^*} \| = |F|$ and hence $\| \pi_{\mc J}(f) \| \leqslant |F|$. Thus it suffices to show $\| \pi_{\mc J}(f) \| \geqslant |F|$. This in turn follows if we can find $\xi \in (\mc A / \mc J)^*$ satisfying $\| \xi \| =1$ and $| \langle \pi_{\mc J} (f), \xi \rangle | \geqslant | F |$. Recall, by Hahn--Banach, that $\pi_{\mc J}^* \colon (\mc A / \mc J)^* \to \mc A^*$ is a linear isometry with range equal to $\mc J^{\perp}$. Hence it is sufficient to find $\mu \in \mc J^{\perp}$ satisfying $\| \mu \| =1$ and $| \langle f, \mu \rangle | \geqslant |F|$.
	
	We shall now define such a $\mu$. To this end, let us consider the following property. Given $\alpha \in \mathbb{N}_0$ we say that $t \in \text{Cu}_2 \setminus \{\lozenge\}$ has property $(\alpha -\maltese)$ if
	\begin{align*}
	t=s_{\mathbf{i}} s_{\mathbf{kj}}^* \text{ for some } \mathbf{i}, \mathbf{j} \in \mathbf{I}_{\alpha}, \text{ and } \mathbf{k} \in F. \qquad (\alpha - \maltese)
	\end{align*}
	Now define $\mu \colon \text{Cu}_2 \setminus \{\lozenge\} \to \mathbb{C}$ by setting
	\begin{align}\label{eq:mu_defn}
	\mu(t):= \begin{cases}
	2^{- \alpha} &\text{if } t \text{ has property  } (\alpha-\maltese) \text{ for some } \alpha \in \mathbb{N}_0 \\
	0 &\text{otherwise}
	\end{cases} \quad (t \in \text{Cu}_2 \setminus \{\lozenge\}).
	\end{align}
	We need to check that $\mu$ is well-defined. Assume $\alpha, \beta \in \mathbb{N}_0$, $\mathbf{i}, \mathbf{j} \in \mathbf{I}_{\alpha}$, $\mathbf{p}, \mathbf{q} \in \mathbf{I}_{\beta}$ and $\mathbf{k}, \mathbf{l} \in F$ are such that $s_{\mathbf{i}} s_{\mathbf{kj}}^* = s_{\mathbf{p}} s_{\mathbf{lq}}^*$. Then it follows from Lemma~\ref{l: nuts_and_bolts} (2) that $\mathbf{i}= \mathbf{p}$ and hence $\alpha = \beta$.
	
	It is clear that $\mu$ is bounded with $\| \mu \| =1$, hence $\mu \in \mc A^*$. We want to show that in fact $\mu \in \mc J^{\perp}$, which in view of Lemma~\ref{l: props_j_perp} (2) is equivalent to the following claim.
	
	\begin{claim}
		$\mu = T^* \mu$.
	\end{claim}
	
	\begin{proof}[Proof of Claim.]
		\textit{Assume first $t \in \text{Cu}_2 \setminus \{\lozenge\}$ has property $(\alpha- \maltese)$ for some $\alpha \in \mathbb{N}_0$.} Then $t=s_{\mathbf{i}} s_{\mathbf{kj}}^*$ for some $\mathbf{i}, \mathbf{j} \in \mathbf{I}_{\alpha}$ and $\mathbf{k} \in F$. Notice that $s_{\mathbf{i}1} s_{\mathbf{kj}1}^*$ and $s_{\mathbf{i}2} s_{\mathbf{kj}2}^*$ have property $((\alpha +1) - \maltese)$, hence by \eqref{eq: t_star_action}
		\begin{align}\label{eq:translations_mu}
		\mu(t) &= 2^{- \alpha} = 2^{- \alpha-1} + 2^{- \alpha-1} = \mu (s_{\mathbf{i}1} s_{\mathbf{kj}1}^*) + \mu (s_{\mathbf{i}2} s_{\mathbf{kj}2}^*) \notag \\
		&= (T^* \mu)(s_{\mathbf{i}} s_{\mathbf{kj}}^*) = (T^* \mu)(t).
		\end{align}
		\textit{Assume now $t \in \text{Cu}_2 \setminus \{\lozenge\}$ does not have property $(\alpha - \maltese)$ for any $\alpha \in \mathbb{N}_0$.}	By definition, $\mu(t) =0$. By Lemma~\ref{l: nuts_and_bolts} (2) we can find unique $\alpha, \beta \in \mathbb{N}_0$ and $\mathbf{i} \in \mathbf{I}_{\alpha}$, $\mathbf{j} \in \mathbf{I}_{\beta}$ such that $t=s_{\mathbf{i}} s_{\mathbf{j}}^*$.
		
		We observe that $s_{\mathbf{i}1} s_{\mathbf{j}1}^*$ does not have property $(\gamma - \maltese)$ for any $\gamma \in \mathbb{N}_0$. For assume towards a contradiction that there exist $\gamma \in \mathbb{N}_0$, $\mathbf{p}, \mathbf{q} \in \mathbf{I}_{\gamma}$ and $\mathbf{l} \in F$ such that $s_{\mathbf{i}1} s_{\mathbf{j}1}^* = s_{\mathbf{p}} s_{\mathbf{lq}}^*$. Then $\mathbf{i}1 = \mathbf{p}$ and $\mathbf{j}1 = \mathbf{lq}$. In particular $\alpha +1 = \gamma$ and $\mathbf{lq} \in \mathbf{I}_{\beta +1}$.
		\begin{itemize}
			\item \textit{Suppose $\alpha \geqslant \beta$.} By the above $\mathbf{l} \in \mathbf{I}_{\beta +1 - \gamma} = \mathbf{I}_{\beta- \alpha}$.  Consequently $\mathbf{l} \in \mathbf{I}_0$ must hold, which is equivalent to saying $\mathbf{l}= \emptyset$. This contradicts $\emptyset \notin F$.
			
			\item \textit{Suppose $\alpha < \beta$.} Then $\mathbf{j}= \mathbf{wu}$ for some $\mathbf{u} \in \mathbf{I}_{\alpha}$ and $\mathbf{w} \in \mathbf{I}_{\beta - \alpha}$ and therefore $t= s_{\mathbf{i}} s_{\mathbf{wu}}^*$. As $t$ does not have property $(\alpha - \maltese)$ it follows that $\mathbf{w} \notin F$. However $\mathbf{u}1 \in \mathbf{I}_{\alpha +1}$ and $\mathbf{q} \in \mathbf{I}_{\alpha +1}$, thus from $\mathbf{wu}1= \mathbf{j}1= \mathbf{lq}$ we conclude $\mathbf{w}= \mathbf{l} \in F$, a contradiction.
		\end{itemize}
		An analogous argument shows that $s_{\mathbf{i}2} s_{\mathbf{j}2}^*$ does not have property $(\gamma - \maltese)$ either for any $\gamma \in \mathbb{N}_0$. From \eqref{eq: t_star_action} we obtain $(T^* \mu)(t) = (T^* \mu)(s_{\mathbf{i}} s_{\mathbf{j}}^*) = \mu(s_{\mathbf{i}1} s_{\mathbf{j}1}^*) + \mu(s_{\mathbf{i}2} s_{\mathbf{j}2}^*) =0$, as required.
	\end{proof}
	
	Lastly, from the definition of $\mu$ we see
	\begin{align}
	|\langle f, \mu \rangle| = \Big| \sum_{\mathbf{i} \in F} \langle \delta_{s_{\mathbf{i}}^*}, \mu \rangle \Big| =\Big| \sum_{\mathbf{i} \in F} \mu(s_{\mathbf{i}}^*) \Big| = \sum_{\mathbf{i} \in F} 2^{-0} = |F|.
	\end{align}
	Hence the proposition is proved.
\end{proof}

\begin{proposition}\label{p: not_bdd_below}
	Fix $p \in [1, \infty)$. Let $\Theta \colon \mc A/\mc J \rightarrow \mc B(\ell^p)$ be a continuous algebra homomorphism such that $\Theta(\pi_{\mc J}(\delta_{s_i^*}))=A_i$ for all $i \in \{1,2 \}$. Then $\Theta$ is injective but it is not bounded below.
\end{proposition}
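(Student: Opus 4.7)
The plan is to dispatch the two conclusions separately: injectivity will follow from a short abstract argument using simplicity of $\mc A/\mc J$, while the failure of being bounded below will come from an explicit test sequence built from Proposition~\ref{p: pre_not_bdd_below}.

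For injectivity, I would note that $\Theta$ is non-zero since $\Theta(\pi_{\mc J}(\delta_{s_1^*})) = A_1 \neq 0$, so its kernel is a proper two-sided ideal of $\mc A/\mc J$. By Theorem~\ref{t: quot_pi} the algebra $\mc A/\mc J$ is purely infinite, hence simple by Lemma~\ref{l: pui}~(1); thus $\ker \Theta = \{0\}$.

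For the failure of being bounded below, the strategy is to take $f_n := \sum_{\mathbf{i} \in \mathbf{I}_n} \delta_{s_{\mathbf{i}}^*} \in \mc A$ as the test sequence. Applying Proposition~\ref{p: pre_not_bdd_below} with $F_n := \mathbf{I}_n$ (so $|F_n| = 2^n$ and $\emptyset \notin F_n$) gives $\|\pi_{\mc J}(f_n)\| = 2^n$. On the operator side, using that $s_{\mathbf{i}}^* = s_{i_n}^* \cdots s_{i_1}^*$ for $\mathbf{i} = (i_1, \ldots, i_n) \in \mathbf{I}_n$, the hypothesis on $\Theta$ yields $\Theta(\pi_{\mc J}(\delta_{s_{\mathbf{i}}^*})) = A_{i_n} \cdots A_{i_1}$. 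Reindexing $\mathbf{i} \mapsto (i_n, \ldots, i_1)$ (a self-bijection of $\mathbf{I}_n$) then gives
\begin{align}
\Theta(\pi_{\mc J}(f_n)) = \sum_{\mathbf{j} \in \mathbf{I}_n} A_{j_1} A_{j_2} \cdots A_{j_n} = (A_1 + A_2)^n.
\end{align}
The essential numerical input is the bound $\|A_1 + A_2\|_{\mc B(\ell^p)} \leqslant 2^{1-1/p}$: since $((A_1 + A_2)x)(k) = x_{2k-1} + x_{2k}$, this follows from the convexity inequality $|a+b|^p \leqslant 2^{p-1}(|a|^p + |b|^p)$ for $p > 1$, and directly from the triangle inequality when $p = 1$. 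Consequently
\begin{align}
\frac{\|\Theta(\pi_{\mc J}(f_n))\|}{\|\pi_{\mc J}(f_n)\|} \leqslant \frac{2^{n(1-1/p)}}{2^n} = 2^{-n/p} \longrightarrow 0 \qquad (n \to \infty),
\end{align}
witnessing that $\Theta$ fails to be bounded below.

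The main substantive step is recognising the right test sequence: Proposition~\ref{p: pre_not_bdd_below} guarantees that the quotient norm of $f_n$ attains the maximal possible value $2^n$, while $A_1 + A_2$ has norm strictly less than $2$ on $\ell^p$ for every $p \in [1, \infty)$, producing the required exponential gap. The remaining bookkeeping---the reindexing trick that collapses the sum into $(A_1 + A_2)^n$ and the estimate on $\|A_1 + A_2\|$---is entirely routine.
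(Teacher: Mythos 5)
Your proposal is correct and follows essentially the same route as the paper: the paper also deduces injectivity from simplicity of $\mc A/\mc J$ (via Theorem~\ref{t: quot_pi} and Lemma~\ref{l: pui}), and it disproves boundedness below using the test elements $h^N$ with $h=(\delta_{s_1^*}+\delta_{s_2^*})/2$, which are just your $2^{-n}f_n$, together with Proposition~\ref{p: pre_not_bdd_below} and the bound $\|(A_1+A_2)/2\|=2^{-1/p}$. The only cosmetic difference is that the paper bounds $\|\Theta(\pi_{\mc J}(h^N))\|\leqslant\|\Theta(\pi_{\mc J}(h))\|^N$ directly from the homomorphism property rather than expanding the sum into $(A_1+A_2)^n$ by reindexing.
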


\begin{proof}
	Let us consider the operator $S:= (A_1 + A_2)/2 \in \mc B(\ell^p)$. We \textit{claim} that $\|S\| = 2^{-1/p}$. To see this, we first observe that $(Sx)(k) = (x_{2k} + x_{2k-1})/2$ for all $x \in \ell^p$ and $k \in \mathbb{N}$. Thus for each $x \in \ell^p$,
	\begin{align}\label{e: norm_s}
	\|Sx\|^p \leqslant \sum_{k=1}^{\infty} \Big( \dfrac{|x_{2k-1}| + | x_{2k} |}{2} \Big)^p \leqslant \sum_{k=1}^{\infty} \Big( \dfrac{|x_{2k-1}|^p + | x_{2k} |^p}{2} \Big) = \dfrac{1}{2} \sum_{n=1}^{\infty} |x_n|^p = \dfrac{1}{2} \|x\|^p .
	\end{align}
	The second inequality in \eqref{e: norm_s} follows from convexity of the function $x \mapsto x^p; \; [0, \infty) \to [0, \infty)$. (Specifically, we use that $(a+b)^p/2^p \leqslant (a^p + b^p)/2$ whenever $a, b \geqslant 0$.) Thus $\|S\| \leqslant 2^{-1/p}$. The upper bound is sharp, as $\|Sx\| = 2^{-1/p}\|x\|$ holds by choosing for example $x:=e_1+e_2$.
	
	We set $h:= (\delta_{s_1^*} + \delta_{s_2^*})/2$. For any $N \in \mathbb{N}$ we see that $h^N = 2^{-N}\sum_{\mathbf{i} \in \mathbf{I}_N} \delta _{s_{\mathbf{i}}^*}$, where clearly $| \mathbf{I}_N | = 2^N$ and $\emptyset \notin \mathbf{I}_N$. Therefore by Proposition~\ref{p: pre_not_bdd_below} we obtain
	\begin{align}\label{e: not_bdd_below1}
	\left\| \pi_{\mc J} \left( h^N \right) \right\| = 2^{-N} \Big\| \pi_{\mc J} \Big( \sum_{\mathbf{i} \in \mathbf{I}_N} \delta _{s_{\mathbf{i}}^*} \Big) \Big\| = 2^{-N} |\mathbf{I}_N | = 1 \qquad (N \in \mathbb{N}).
	\end{align}
	From $\Theta (\pi_{\mc J}(h)) = S$ and that $\Theta$ and $\pi_{\mc J}$ are homomorphisms we obtain	
	\begin{align}\label{e: not_bdd_below2}
	\left\| \Theta \left( \pi_{\mc J} \left( h^N \right) \right) \right\| \leqslant
	\left\| \Theta \left( \pi_{\mc J} \left( h \right) \right) \right\|^N =
	\|S\|^N = 2^{-N/p} \qquad (N \in \mathbb{N}). 
	\end{align}
	It follows from \eqref{e: not_bdd_below1} and \eqref{e: not_bdd_below2} that $\Theta$ cannot be bounded below.
	
	Lastly, $\mc A / \mc J$ is purely infinite by Theorem~\ref{t: quot_pi}, hence in particular it is simple by Lemma~\ref{l: pui}~(1). As $\Theta$ is a non-zero continuous algebra homomorphism, $\Ker(\Theta) = \{0\}$ must hold.
\end{proof}

\begin{remark}
We stated this result for an arbitrary homomorphism $\Theta$ with $\Theta(\pi_{\mc J}(\delta_{s_i^*}))=A_i$ for $i\in\{1,2\}$, as this is all the argument needed.  In fact, such a $\Theta$ is already equal to $\Theta_p$, as defined above.  Indeed, let $C_i = \Theta(\pi_{\mc J}(\delta_{s_i}))$ for $i\in\{1,2\}$. Then we have that $A_1 C_1= I_{\ell^p}= A_2 C_2$ and $A_1 C_2= 0 =A_2 C_1$ and $C_1A_1+C_2A_2 = I_{\ell^p}$. Thus $B_1 = B_1 A_1 C_1 = (I_{\ell^p} - B_2 A_2) C_1 = C_1 - B_2 A_2 C_1 = C_1$ and symmetrically $B_2 = C_2$.
From this, \eqref{eq: elements_of_a} and Proposition~\ref{p: rep_quot} the claim readily follows.
\end{remark}

\begin{theorem}\label{t: ultra_not_pi}
	The Banach $*$-algebra $(\mc A / \mc J)_{\mc U}$ is not simple hence not purely infinite for any countably-incomplete ultrafilter $\mc U$.
\end{theorem}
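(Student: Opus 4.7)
The proof should be essentially a one-line combination of the preparatory results already established. The plan is to invoke the contrapositive of Proposition~\ref{p: pi_implies_bdd_below}: to show that $(\mc A/\mc J)_{\mc U}$ is not simple, it suffices to exhibit a single non-zero continuous algebra homomorphism from $\mc A/\mc J$ into some Banach algebra which fails to be bounded below.

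Such a homomorphism is built in Proposition~\ref{p: rep_quot}: for any $p \in [1,\infty)$ we have the continuous unital homomorphism $\Theta_p \colon \mc A/\mc J \to \mc B(\ell^p)$ satisfying $\Theta_p(\pi_{\mc J}(\delta_{s_i^*})) = A_i$ for $i \in \{1,2\}$. This $\Theta_p$ is clearly non-zero (it is unital and $\ell^p \neq 0$). Proposition~\ref{p: not_bdd_below} then tells us precisely that $\Theta_p$ is not bounded below (the element $h^N = 2^{-N}\sum_{\mathbf{i}\in\mathbf{I}_N}\delta_{s_{\mathbf{i}}^*}$ witnesses this, with $\|\pi_{\mc J}(h^N)\|=1$ but $\|\Theta_p(\pi_{\mc J}(h^N))\|\leqslant 2^{-N/p}\to 0$).

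So the proof will read: fix any $p\in[1,\infty)$ and the continuous non-zero algebra homomorphism $\Theta_p \colon \mc A/\mc J \to \mc B(\ell^p)$ from Proposition~\ref{p: rep_quot}. By Proposition~\ref{p: not_bdd_below}, $\Theta_p$ is not bounded below. Applying the contrapositive of Proposition~\ref{p: pi_implies_bdd_below} to $\mc A/\mc J$ in the role of the Banach algebra, we conclude that $(\mc A/\mc J)_{\mc U}$ is not simple for any countably-incomplete ultrafilter $\mc U$. The ``hence not purely infinite'' clause follows directly from Lemma~\ref{l: pui}(1), since any purely infinite Banach algebra is simple.

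There is no real obstacle here; all the work has been done in the previous propositions. The only conceptual point worth flagging is that Proposition~\ref{p: pi_implies_bdd_below} is genuinely the right tool: it converts a failure of norm control for some representation of $\mc A/\mc J$ (which we already have in hand via $\Theta_p$ and the operator $S=(A_1+A_2)/2$ of norm $2^{-1/p}<1$) into the non-simplicity of the ultrapower, without ever having to perform a direct computation inside $(\mc A/\mc J)_{\mc U}$.
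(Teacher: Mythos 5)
Your proposal is correct and follows exactly the paper's own argument: the paper likewise fixes a value of $p$ (it takes $p=1$), cites Proposition~\ref{p: rep_quot} for the existence of $\Theta$, Proposition~\ref{p: not_bdd_below} for failure of boundedness below, Proposition~\ref{p: pi_implies_bdd_below} for non-simplicity of the ultrapower, and Lemma~\ref{l: pui}~(1) for the final clause. Nothing further is needed.
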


\begin{proof}
	Taking for example $p=1$, it follows from Proposition~\ref{p: rep_quot} that there is a continuous, unital algebra homomorphism $\Theta \colon \mathcal{A} / \mathcal{J} \rightarrow \mathcal{B}(\ell^1)$ with $\Theta(\pi_{\mc J}(\delta_{s_i^*}))=A_i$ for each $i \in \{1,2 \}$. Thus $\Theta$ is not bounded below by Proposition~\ref{p: not_bdd_below}. Hence $(\mc A / \mc J)_{\mc U}$ cannot be simple for any countably-incomplete ultrafilter $\mc U$ by Proposition~\ref{p: pi_implies_bdd_below}. In particular, $(\mc A / \mc J)_{\mc U}$ is not purely infinite by Lemma~\ref{l: pui} (1).
\end{proof}

\begin{remark}
	Even though $(\mc A / \mc J)_{\mc U}$ is not purely infinite for any countably-incomplete ultrafilter $\mc U$, it is always properly infinite. Indeed, $\mc A / \mc J$ is purely infinite by Theorem~\ref{t: quot_pi} hence it is properly infinite by Lemma~\ref{l: pui} (2). Now it follows from \cite[Corollary~4.19]{dh} that an ultrapower of a properly infinite Banach algebra is properly infinite, hence $(\mc A / \mc J)_{\mc U}$ is properly infinite for any ultrafilter $\mc U$.
\end{remark}

\begin{remark}\label{rem:phillips}
In \cite{p1} Phillips considers certain representations of the Leavitt algebra $L_2$ (see Remark~\ref{rem:leavitt_algs}) on $L^p$ spaces. Indeed, \cite[Example~3.1]{p1} constructs a representation of $L_2$ on $\ell^p$ which is essentially the same as the restriction of our $\Theta_p$ to $L_2$. Phillips explores generalisations of these representations, which are called \emph{spatial}, see \cite[Definition~7.4, Lemma~7.5]{p1}.  It is shown in \cite[Theorem~8.7]{p1} that all spatial representations give rise to isometrically isomorphic closures.  This gives rise to the $p$-analogues of the Cuntz algebras, \cite[Definition~8.8]{p1}; see also \cite{cgt} for more recent study of these algebras.
Thus the closure of the image of $\Theta_p$, inside $\mc B(\ell^p)$, is isometric to $\mc O^p_2$, in the language of \cite{p1}.  Our result of course shows that $\Theta_p |^{\mc O^p_2} \colon \mc A/\mc J \to \mc O^p_2$ is \emph{not} an isomorphism, because it is not bounded below. In fact, as we shall see in Section~\ref{s: traces}, the Banach algebras $\mc A / \mc J$ and $\mc O^p_2$ are \textit{not} isomorphic for any $p \in [1, \infty)$.

Phillips shows in \cite{p2} that, in particular, $\mc O^p_2$ is purely infinite (with the same definition as we use).  
The proof, however, is different to our proof that $\mc A/\mc J$ is purely infinite, and much more closely parallels the $C^*$-algebraic proof that $\mc O_2$ is purely infinite.  A close examination of the proof shows that it does not work for $\mc A/\mc J$, as various necessary norm estimates are different (in the sense of not even being equivalent up to a constant) for $\mc A/\mc J$.

It is not obvious to us that the proof in \cite{p2} provides an estimate for how $C_{\text{pi}}^{\mc O^p_2}$ behaves, and hence if $\mc O^p_2$ has purely infinite ultrapowers. Furthermore, given a lack of nice ``permanence'' properties for purely infinite Banach algebras, it seems that knowing $\mc O^p_2$ is purely infinite is no direct help in showing that $\mc A/\mc J$ is purely infinite, or vice versa. We remark that similar questions around
``permanence properties'' are raised at the end of \cite{agps}.
\end{remark}

\begin{remark}
As a final remark, while we have chosen to work with $\text{Cu}_2$ in this paper (or, equivalently, with an $\ell^1$-completion of the Leavitt algebra $L_2$, see Remark~\ref{rem:leavitt_algs}) one could also follow Phillips, and consider the general $\text{Cu}_d$ and $L_d$, for $d\geqslant 2$.  We have chosen not to do this for notational simplicity, but let us quickly indicate what changes would be needed.

$\text{Cu}_d$ has generators $(s_i)_{i=1}^d, (s_i^*)_{i=1}^d$ with $s_i^*s_i=e$ and $s_i^*s_j=\lozenge$ for $i\not=j$.  The combinatorics of $\text{Cu}_d$ are essentially the same, just with $\bf I$ now being all finite sequences in $\{1,2,\cdots,d\}$.  We can then form $\mc A_d := \ell^1(\text{Cu}_d\setminus\{\lozenge\},\#)$ and consider the ideal $\mc J_d$ generated by the element $\delta_e - \sum_{i=1}^d \delta_{s_is_i^*}$.  All of the results continue to hold, with essentially identical proofs, excepting that various statements in the proof of Proposition~\ref{p: main2} now need to sum over $1,2,\cdots,d$ instead of just $1,2$.

To represent $\mc A_d/\mc J_d$ on $\ell^p$, we simply replace ``$2n$'' by ``$dn$'', for example, defining
\begin{align}
 (A_kx)(n) = x_{dn+1-k} \qquad \left( x\in\ell^p, \; k \in \{1,2, \ldots, d \} \right).
\end{align}
The obvious modifications can be made to Proposition~\ref{p: rep_quot}.  Similarly, $d$ cases, instead of just two, need to be considered in the proof of Lemma~\ref{l: alternative_j}, and in the definitions of $\tau_k$ after this (now $k \in \{1,2,\cdots,d\}$), and $2$ replaced by $d$ in (\ref{eq: t_star_action}), (\ref{eq:mu_defn}) and (\ref{eq:translations_mu}).  Finally, we perform similar alterations to Proposition~\ref{p: not_bdd_below}.
\end{remark}

\section{Bounded traces on $\mathcal{A}/ \mathcal{J}$ and the lack thereof on $\mathcal{O}_2^p$}\label{s: traces}	

The main aim of this section is to prove that there exist non-zero bounded traces on $\mathcal{A}/ \mathcal{J}$ (Theorem~\ref{t: trace_on_aj}), while there are no non-zero bounded traces on $\mathcal{O}_2^p$ (Theorem~\ref{t: no_trace_on_o_2}). These two results immediately imply the following in particular:

\begin{theorem}\label{t: aj_o_2_nonisomorphic}
	The Banach algebras $\mc A / \mc J$ and $\mathcal{O}_2^p$ are not isomorphic for any $p \in [1, \infty)$.
\end{theorem}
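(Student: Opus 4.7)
The plan is to deduce this theorem immediately from Theorem~\ref{t: trace_on_aj} and Theorem~\ref{t: no_trace_on_o_2}, exactly as the introductory sentence to Section~\ref{s: traces} promises. The underlying principle is that the existence of a non-zero bounded trace is an invariant of a Banach algebra under bicontinuous algebra isomorphism.

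Concretely, I would argue by contradiction. Suppose that for some $p \in [1,\infty)$ there is a continuous algebra isomorphism $\Phi \colon \mc A/\mc J \to \mc O_2^p$. Since $\mc A/\mc J$ and $\mc O_2^p$ are Banach algebras and $\Phi$ is a continuous linear bijection, the open mapping theorem ensures that $\Phi^{-1}$ is also continuous. Let $\tau \colon \mc A/\mc J \to \mathbb{C}$ be the non-zero bounded trace guaranteed by Theorem~\ref{t: trace_on_aj}, and define $\widetilde{\tau} := \tau \circ \Phi^{-1} \colon \mc O_2^p \to \mathbb{C}$. Then $\widetilde{\tau}$ is bounded and linear; moreover, for all $a,b \in \mc O_2^p$ we have
\[
\widetilde{\tau}(ab) \;=\; \tau\bigl(\Phi^{-1}(a)\Phi^{-1}(b)\bigr) \;=\; \tau\bigl(\Phi^{-1}(b)\Phi^{-1}(a)\bigr) \;=\; \widetilde{\tau}(ba),
\]
using that $\Phi^{-1}$ is multiplicative and that $\tau$ is a trace. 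Since $\Phi^{-1}$ is surjective and $\tau$ is non-zero, $\widetilde{\tau}$ is non-zero as well, contradicting Theorem~\ref{t: no_trace_on_o_2}.

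There is essentially no obstacle in the proof of this corollary; all the real content is contained in Theorems~\ref{t: trace_on_aj} and~\ref{t: no_trace_on_o_2}. In the remainder of Section~\ref{s: traces} I would expect the construction of the trace on $\mc A/\mc J$ to proceed via the annihilator description $\mc J^{\perp} = \{ \mu \in \mc A^* \colon T^*\mu = \mu \}$ from Lemma~\ref{l: props_j_perp}~(2): a bounded trace on $\mc A/\mc J$ corresponds to a bounded linear functional on $\mc A$ that simultaneously vanishes on commutators and is $T^*$-invariant, so one would exhibit a concrete $T^*$-fixed $\mu \in \ell^{\infty}(\text{Cu}_2 \setminus \{\lozenge\})$ satisfying $\mu(s_{\mathbf i}s_{\mathbf j}^*) = \mu(s_{\mathbf j}^*s_{\mathbf i})$ (for example by putting mass only on symmetric elements and exploiting the partition given by Lemma~\ref{l: partition}). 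Conversely, the absence of traces on $\mc O_2^p$ should be established in the spirit of the standard $C^*$-algebraic argument for $\mc O_2$: any bounded trace $\sigma$ would satisfy $\sigma(I) = \sigma(B_iA_i)$ for each $i$, and combining this with $B_1A_1 + B_2A_2 = I$ together with $A_iB_i = I$ yields $\sigma(I) = 2\sigma(I)$ and hence $\sigma = 0$ on the dense subalgebra generated by the spatial generators, whence $\sigma = 0$ by continuity.
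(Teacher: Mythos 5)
Your proof is correct and is exactly the paper's argument: Theorem~\ref{t: aj_o_2_nonisomorphic} is stated as an immediate consequence of Theorems~\ref{t: trace_on_aj} and~\ref{t: no_trace_on_o_2}, the point being precisely that a bicontinuous algebra isomorphism (automatic by the open mapping theorem) transports a non-zero bounded trace from $\mc A/\mc J$ to $\mc O_2^p$. One caveat on your closing speculation, which does not affect the proof of the stated theorem: the computation $\sigma(I)=2\sigma(I)$ only yields $\sigma(I)=0$, not $\sigma=0$ on a dense subalgebra, and the paper explicitly remarks that this easy argument rules out only \emph{normalised} traces; the actual proof of Theorem~\ref{t: no_trace_on_o_2} requires the harder isometry estimates of Lemmas~\ref{l: isometry} and~\ref{l: contraction}.
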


\subsection{Some background on traces}
Let us first recall some basic properties of traces. A \textit{trace} on a Banach algebra $\mathcal{B}$ is a linear functional $\tau$ on $\mathcal{B}$ with $\langle ab, \tau \rangle = \langle ba, \tau \rangle$ for each $a,b \in \mathcal{B}$. We say that a trace $\tau$ on $\mathcal{B}$ is bounded if $\tau \in \mathcal{B}^*$. The \textit{trace space} of $\mathcal{B}$ is the set of all bounded traces on $\mathcal{B}$. A trace $\tau$ on a unital Banach algebra $\mathcal{B}$ is \textit{normalised} if $\langle 1_{\mathcal{B}}, \tau \rangle = 1$.

Let $\mathcal{B}$ be a unital Banach algebra and let $\mathcal{I}$ be a closed, two-sided ideal in $\mathcal{B}$.  If $\tau$ is a bounded trace on $\mathcal B$ with $\mathcal I \subseteq \Ker(\tau)$, then there is a unique bounded linear map $\tau':\mathcal B/\mathcal I\rightarrow\mathbb C$ with $\tau'\circ\pi_{\mathcal I}=\tau$, and $\tau'$ is readily seen to be a trace, non-zero if and only if $\tau$ is non-zero.  We say that $\tau$ \emph{drops to a trace on $\mathcal B/\mathcal I$}.  Any bounded trace $\tau'$ on $\mathcal B/\mathcal I$ arises from such a $\tau$.  Hence the trace space of $\mathcal{B} / \mathcal{I}$ is in bijection with the set of bounded traces $\tau$ on $\mathcal{B}$ for which $\mathcal{I} \subseteq \Ker(\tau)$ holds.

\subsection{There \textit{are} non-zero bounded traces on $\mathcal{A}/ \mathcal{J}$}

Recall that $\ell^{1}(\text{Cu}_2)^*$ is isometrically isomorphic to $\ell^{\infty}(\text{Cu}_2)$ and that $\mathcal{A} := (\ell^{1}(\text{Cu}_2 \setminus \{ \lozenge \}), \#)$ is isometrically isomorphic to $\ell^{1}(\text{Cu}_2) / \mathbb{C} \delta_{\lozenge}$. Hence by the previous section we may identify the trace space of $\mathcal{A}$ with the set
\begin{align}\label{e: traces_on_A}
\left\{ \tau |_{\text{Cu}_2 \setminus \{ \lozenge \}} \colon \tau \in \ell^{\infty}(\text{Cu}_2), \; \tau(\lozenge) = 0, \; \tau(uv) = \tau(vu) \quad (u,v \in \text{Cu}_2) \right\}.
\end{align}

Recall that $(\mathcal{A} / \mathcal{J})^*$ and $\mathcal{J}^{\perp}$ are isometrically isomorphic, and that $\mathcal{J}^{\perp}$ embeds isometrically into $\ell^{\infty}(\text{Cu}_2 \setminus \{\lozenge\})$. Henceforth we identify traces on $\mathcal{A} / \mathcal{J}$ with a subset of $\ell^{\infty}(\text{Cu}_2 \setminus \{\lozenge\})$.

\begin{lemma}\label{l: char_trace}
	Let $\tau$ be a bounded trace on $\mathcal{A}$. Then $\mathcal{J} \subseteq \Ker(\tau)$ if and only if $\tau(e) = 0$. Consequently, $\tau$ drops to a bounded trace on $\mathcal{A} / \mathcal{J}$ if and only if $\tau(e) =0$.
\end{lemma}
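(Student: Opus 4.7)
The plan is to prove both directions directly from the generating set of $\mathcal{J}$, exploiting the trace property to reduce everything to the identity $\tau(s_is_i^*)=\tau(s_i^*s_i)=\tau(e)$.

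For the forward direction, assume $\mathcal{J}\subseteq\Ker(\tau)$. Since $f_0 = \delta_e - \delta_{s_1 s_1^*} - \delta_{s_2 s_2^*}\in\mathcal{J}$, linearity gives $\tau(e) = \tau(s_1s_1^*) + \tau(s_2s_2^*)$. Using the trace property together with $s_i^*s_i = e$ (in $\text{Cu}_2$, so $\delta_{s_i}\#\delta_{s_i^*}$ corresponds to $s_is_i^*$ while $\delta_{s_i^*}\#\delta_{s_i}=\delta_e$), we get $\tau(s_is_i^*)=\tau(e)$ for $i=1,2$, and hence $\tau(e) = 2\tau(e)$, i.e.\ $\tau(e)=0$.

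For the reverse direction, assume $\tau(e)=0$. By Lemma~\ref{l: alternative_j} and continuity of $\tau$, it suffices to show $\tau(\delta_{s_{\mathbf{i}}}\#f_0\#\delta_{s_{\mathbf{j}}^*})=0$ for all $\mathbf{i},\mathbf{j}\in\mathbf{I}$. Applying the trace identity we may rewrite this as $\tau(\delta_{s_{\mathbf{j}}^*}\#\delta_{s_{\mathbf{i}}}\#f_0)$, and Lemma~\ref{l: nuts_and_bolts}\,(1) gives four cases for $\delta_{s_{\mathbf{j}}^*}\#\delta_{s_{\mathbf{i}}}$: if it equals $\lozenge$ the expression is $0$; if $\mathbf{i}=\mathbf{j}$ the expression equals $\tau(f_0)=\tau(e)-2\tau(e)=0$; if $\mathbf{j}=\mathbf{i}\mathbf{k}$ with $\mathbf{k}\neq\emptyset$ the expression is $\tau(\delta_{s_{\mathbf{k}}^*}\#f_0)$, which vanishes by the computation $\delta_{s_{\mathbf{k}}^*}\#f_0 = 0$ performed inside the proof of Lemma~\ref{l: alternative_j} (see \eqref{e: alternative_j_calc}); finally, if $\mathbf{i}=\mathbf{j}\mathbf{k}$ with $\mathbf{k}\neq\emptyset$ the expression becomes $\tau(\delta_{s_{\mathbf{k}}}\#f_0) = \tau(f_0\#\delta_{s_{\mathbf{k}}})$, and the symmetric identity $f_0\#\delta_{s_{\mathbf{k}}}=0$ noted in the same proof finishes the case.

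The ``consequently'' statement is then immediate from the general principle recalled in the preceding subsection: a bounded trace on $\mathcal{A}$ drops to a bounded trace on $\mathcal{A}/\mathcal{J}$ precisely when $\mathcal{J}\subseteq\Ker(\tau)$. There is no real obstacle here; the only mildly delicate point is making sure that \eqref{e: alternative_j_calc} and its mirror image are invoked correctly, so the proof should spell out both the $\delta_{s_{\mathbf{k}}^*}\#f_0=0$ and the $f_0\#\delta_{s_{\mathbf{k}}}=0$ identities for $\mathbf{k}\neq\emptyset$.
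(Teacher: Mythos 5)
Your proof is correct and follows essentially the same route as the paper's: both directions reduce to $\langle f_0,\tau\rangle=-\tau(e)$ via the trace identity $\tau(s_is_i^*)=\tau(s_i^*s_i)$, and the reverse direction uses Lemma~\ref{l: alternative_j}, cycling under the trace, the case analysis from Lemma~\ref{l: nuts_and_bolts}\,(1), and the identities $\delta_{s_{\mathbf{k}}^*}\#f_0=0$ and $f_0\#\delta_{s_{\mathbf{k}}}=0$ for $\mathbf{k}\neq\emptyset$, exactly as in the paper. No gaps.
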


\begin{proof}
	Let us first note that by the definition of $f_0$ it follows that
	\begin{align}\label{e: trace_at_f0}
	\langle f_0, \tau \rangle &= \langle \delta_e, \tau \rangle - \langle \delta_{s_1 s_1^*}, \tau \rangle  - \langle \delta_{s_2 s_2^*}, \tau \rangle = \tau(e) - \tau(s_1 s_1^*) - \tau(s_2 s_2^*) \notag \\
	&= \tau(e) - \tau(s_1^* s_1) - \tau(s_2^* s_2) = \tau(e) - 2 \tau(e) = -\tau(e).
	\end{align}
	\textit{Suppose $\tau(e)=0$ holds.} We show that $\mathcal J \subseteq \Ker(\tau)$.  By Lemma~\ref{l: alternative_j} it is enough to show that $\langle \delta_{s_{\mathbf{i}}} \# f_0 \# \delta_{s_{\mathbf{j}}^*}, \tau \rangle =0$ for $\mathbf{i}, \mathbf{j} \in \mathbf{I}$.   We consider a number of possibilities.  Suppose there is a $\mathbf{k} \in \mathbf{I}$ such that $\mathbf{j} = \mathbf{ik}$, then by Lemma~\ref{l: nuts_and_bolts} we have $s_{\mathbf{j}}^* s_{\mathbf{i}} = s_{\mathbf{k}}^*$. Thus
	\begin{align}
	\langle \delta_{s_{\mathbf{i}}} \# f_0 \# \delta_{s_{\mathbf{j}}^*}, \tau \rangle = \langle \delta_{s_{\mathbf{j}}^*} \# \delta_{s_{\mathbf{i}}} \# f_0, \tau \rangle = \langle \delta_{s_{\mathbf{k}}^*} \# f_0, \tau \rangle.
	\end{align}
	If $\mathbf{k} \neq \emptyset$, then from \eqref{e: alternative_j_calc} we see that $\delta_{s_{\mathbf{k}}^*} \# f_0 =0$, and so $\langle \delta_{s_{\mathbf{k}}^*} \# f_0, \tau \rangle = 0$.	If $\mathbf{k} = \emptyset$ then $s_{\mathbf{k}}^* = e$ and thus $\delta_{s_{\mathbf{k}}^*} \# f_0 = f_0$. Hence $\langle \delta_{s_{\mathbf{k}}^*} \# f_0, \tau \rangle = \langle f_0, \tau \rangle = - \tau(e) =0$ by \eqref{e: trace_at_f0} and the assumption $\tau(e) =0$. 
	
	Suppose there is a $\mathbf{k} \in \mathbf{I}$ such that $\mathbf{i} = \mathbf{jk}$, then by Lemma~\ref{l: nuts_and_bolts} we have $s_{\mathbf{j}}^* s_{\mathbf{i}} = s_{\mathbf{k}}$. Thus $\langle \delta_{s_{\mathbf{i}}} \# f_0 \# \delta_{s_{\mathbf{j}}^*}, \tau \rangle = \langle f_0 \#  \delta_{s_{\mathbf{j}}^*} \# \delta_{s_{\mathbf{i}}}, \tau \rangle = \langle f_0 \# \delta_{s_{\mathbf{k}}}, \tau \rangle$. An analogous reasoning to the above shows that $\langle f_0 \# \delta_{s_{\mathbf{k}}}, \tau \rangle =0$.
	
	Finally, if there is no $\mathbf{k} \in \mathbf{I}$ such that $\mathbf{i} = \mathbf{jk}$ or $\mathbf{j} = \mathbf{ik}$, then $s_{\mathbf{j}}^* s_{\mathbf{i}} = \lozenge$ by Lemma~\ref{l: nuts_and_bolts} and hence $\delta_{s_{\mathbf{j}}^*} \# \delta_{s_{\mathbf{i}}} =0$. Thus $\langle \delta_{s_{\mathbf{i}}} \# f_0 \# \delta_{s_{\mathbf{j}}^*}, \tau \rangle = \langle \delta_{s_{\mathbf{j}}^*} \# \delta_{s_{\mathbf{i}}} \# f_0, \tau \rangle =0$. \smallskip
	
	\textit{Suppose $\mathcal{J} \subseteq \Ker(\tau)$ holds.} Then $0 = \langle f_0, \tau \rangle = - \tau(e)$ by \eqref{e: trace_at_f0}, hence $\tau(e) =0$ as claimed.
\end{proof}

There is a complete characterisation of traces on $\mathcal{A}$ given in \cite[Corollary~3.13]{dlr}, which could presumably be extended to a complete characterisation of traces on $\mathcal{A}/\mathcal{J}$.  We restrict our study here to the bare minimum needed for our purposes.

\begin{theorem}\label{t: trace_on_aj}
	Consider the map $\tau \colon \text{Cu}_2 \rightarrow \mathbb{C}$ defined as
	\begin{align}
	\tau(w) = \begin{cases} 1 &\text{if } w = s_{\mathbf{i}1} s_{\mathbf{i}}^* \text{ for some } \mathbf{i} \in \mathbf{I}, \\ 0 & \text{otherwise.} \end{cases}
	\end{align}
	The restriction of $\tau$ onto $\text{Cu}_2 \setminus \{\lozenge\}$ gives a non-zero bounded trace on $\mathcal{A}$ with $\tau(e)=0$. Consequently, $\tau$ drops to a non-zero bounded trace on $\mathcal{A} / \mathcal{J}$.
\end{theorem}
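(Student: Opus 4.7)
The plan is to verify directly that $\tau$ is a non-zero bounded trace on $\mc A$ with $\tau(e) = 0$, and then invoke Lemma~\ref{l: char_trace} to conclude. Well-definedness of $\tau$ on $\text{Cu}_2 \setminus \{\lozenge\}$ follows from the uniqueness of the decomposition $t = s_{\mathbf{p}} s_{\mathbf{q}}^*$ in Lemma~\ref{l: nuts_and_bolts}(2). Since $\tau$ takes values in $\{0,1\}$, $\tau \in \ell^{\infty}(\text{Cu}_2)$, hence via the identification $\mc A^* \cong \ell^{\infty}(\text{Cu}_2 \setminus \{\lozenge\})$ the restriction of $\tau$ extends to a bounded linear functional on $\mc A$ of norm at most $1$. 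By definition $\tau(\lozenge) = 0$, and since $e = s_{\emptyset} s_{\emptyset}^*$ with $\emptyset \neq \mathbf{i}1$ for every $\mathbf{i} \in \mathbf{I}$, we also have $\tau(e) = 0$. Finally, $\tau(s_1) = \tau(s_{\emptyset 1} s_{\emptyset}^*) = 1$ shows that $\tau$ is non-zero.

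The substantive step is to establish the trace identity $\tau(uv) = \tau(vu)$ for all $u,v \in \text{Cu}_2$; by bi-linearity and continuity this is equivalent to the corresponding statement on $\mc A$, in view of $\delta_u \# \delta_v = \delta_{uv}$ (or $0$) and $\tau(\lozenge) = 0$. Write $u = s_{\mathbf{i}} s_{\mathbf{j}}^*$ and $v = s_{\mathbf{m}} s_{\mathbf{n}}^*$, and reduce the central products $s_{\mathbf{j}}^* s_{\mathbf{m}}$ and $s_{\mathbf{n}}^* s_{\mathbf{i}}$ via Lemma~\ref{l: nuts_and_bolts}(1). In the key case $\mathbf{j} = \mathbf{mk}$ and $\mathbf{i} = \mathbf{nl}$ (allowing empty $\mathbf{k}, \mathbf{l}$), one obtains $uv = s_{\mathbf{nl}} s_{\mathbf{nk}}^*$ and $vu = s_{\mathbf{ml}} s_{\mathbf{mk}}^*$, and uniqueness in Lemma~\ref{l: nuts_and_bolts}(2) shows that both $\tau(uv) = 1$ and $\tau(vu) = 1$ amount to the single condition $\mathbf{l} = \mathbf{k}1$. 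The symmetric case $\mathbf{m} = \mathbf{jk}$ with $|\mathbf{k}| \geqslant 1$ and $\mathbf{n} = \mathbf{il}$ gives $uv = s_{\mathbf{ik}} s_{\mathbf{il}}^*$ and $vu = s_{\mathbf{jk}} s_{\mathbf{jl}}^*$, both of which satisfy $\tau = 1$ precisely when $\mathbf{k} = \mathbf{l}1$. In every remaining subcase at least one of the central products is $\lozenge$; a short prefix-length comparison then shows that the form $s_{\mathbf{a}1} s_{\mathbf{a}}^*$ cannot arise on the non-$\lozenge$ side without contradicting the incomparability of the prefixes in question (for instance, if $\mathbf{j}$ and $\mathbf{m}$ are incomparable and $vu = s_{\mathbf{ml}} s_{\mathbf{j}}^* = s_{\mathbf{a}1} s_{\mathbf{a}}^*$, then $\mathbf{a} = \mathbf{j}$ would force $\mathbf{j}$ to be a prefix of $\mathbf{m}$).

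The main obstacle is the bookkeeping in the trace verification: no individual subcase is deep, but care is needed to ensure that the three mutually exclusive cases of Lemma~\ref{l: nuts_and_bolts}(1), applied in parallel to $s_{\mathbf{j}}^* s_{\mathbf{m}}$ and $s_{\mathbf{n}}^* s_{\mathbf{i}}$, are correctly cross-matched between $uv$ and $vu$, and that the degenerate overlaps $\mathbf{k} = \emptyset$ or $\mathbf{l} = \emptyset$ are neither over- nor undercounted. Once the trace identity is in hand, $\tau$ is a non-zero bounded trace on $\mc A$ with $\tau(e) = 0$, so Lemma~\ref{l: char_trace} yields $\mc J \subseteq \Ker(\tau)$ and hence $\tau$ drops to a bounded trace on $\mc A / \mc J$, which remains non-zero because $\tau$ itself is (equivalently, it is $1$ on $\pi_{\mc J}(\delta_{s_1})$).
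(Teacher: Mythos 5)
Your overall strategy---reducing to the generators $\delta_t$, writing $u = s_{\mathbf{i}} s_{\mathbf{j}}^*$ and $v = s_{\mathbf{m}} s_{\mathbf{n}}^*$, computing the central products $s_{\mathbf{j}}^* s_{\mathbf{m}}$ and $s_{\mathbf{n}}^* s_{\mathbf{i}}$ via Lemma~\ref{l: nuts_and_bolts}(1), and cross-matching cases---is the same as the paper's, and the cases you do treat are computed correctly. However, your case analysis is not exhaustive, and the assertion that ``in every remaining subcase at least one of the central products is $\lozenge$'' is false. Lemma~\ref{l: nuts_and_bolts}(1) gives three alternatives for each of the two central products, so there are four essentially distinct configurations in which neither product is $\lozenge$. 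Your ``key case'' $(\mathbf{j}=\mathbf{mk},\ \mathbf{i}=\mathbf{nl})$ is the configuration $s_{\mathbf{j}}^*s_{\mathbf{m}}=s_{\mathbf{k}}^*$, $s_{\mathbf{n}}^*s_{\mathbf{i}}=s_{\mathbf{l}}$, and your ``symmetric case'' $(\mathbf{m}=\mathbf{jk},\ \mathbf{n}=\mathbf{il})$ is the configuration $s_{\mathbf{j}}^*s_{\mathbf{m}}=s_{\mathbf{k}}$, $s_{\mathbf{n}}^*s_{\mathbf{i}}=s_{\mathbf{l}}^*$. The two mixed configurations $(\mathbf{j}=\mathbf{mk},\ \mathbf{n}=\mathbf{il})$ and $(\mathbf{m}=\mathbf{jk},\ \mathbf{i}=\mathbf{nl})$ are missing: for instance $u=s_1s_2^*$ and $v=s_2s_{(1,1)}^*$ give $s_2^*s_2=e$ and $s_{(1,1)}^*s_1=s_1^*$, so neither central product is $\lozenge$, yet this pair falls under neither of your two cases. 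The conclusion does still hold there: in the first mixed case $uv=s_{\mathbf{i}}s_{\mathbf{ilk}}^*$ and $vu=s_{\mathbf{m}}s_{\mathbf{mkl}}^*$, and $\tau=1$ would force $\mathbf{i}=\mathbf{ilk}1$, respectively $\mathbf{m}=\mathbf{mkl}1$, which is impossible on length grounds; the second mixed case is similar once the degenerate overlaps (where $\mathbf{k}$ or $\mathbf{l}$ is empty) are folded back into your two main cases. But as written your proof claims these configurations do not occur rather than checking them, so the verification of the trace identity is incomplete.

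For comparison, the paper avoids the full enumeration by a small trick: it assumes $\tau(vu)=1$, i.e.\ $vu=s_{\mathbf{n}1}s_{\mathbf{n}}^*$, determines which of the three alternatives for $vu$ are then possible and shows $\tau(uv)=1$ in each, and finally deduces the case $\tau(vu)=0$ by symmetry (swapping the roles of $u$ and $v$). Adopting that organisation, or simply adding the two missing mixed configurations with the length argument above, repairs your proof; everything else (well-definedness, boundedness, $\tau(e)=0$, non-vanishing, and the appeal to Lemma~\ref{l: char_trace} for the ``consequently'' part) is correct and matches the paper.
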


\begin{proof}
	That $\tau$ is non-zero follows from \textit{e.g.}\ $\tau(s_1) =1$. Also, $\tau(e)=0$ is immediate.  Rather than using the characterisation given by \cite[Corollary~3.13]{dlr}, which itself requires checking many conditions, we shall show directly that our example $\tau$ is indeed a trace.
	
	Take arbitrary $v,u \in \text{Cu}_2 \setminus \{\lozenge\}$ and write $v = s_{\mathbf{i}} s_{\mathbf{j}}^*$ and $u = s_{\mathbf{k}} s_{\mathbf{l}}^*$ for some $\mathbf{i}, \mathbf{j}, \mathbf{k}, \mathbf{l} \in \mathbf{I}$. From Lemma~\ref{l: nuts_and_bolts} we see that
	\begin{align}
	vu = s_{\mathbf{i}} s_{\mathbf{j}}^* s_{\mathbf{k}} s_{\mathbf{l}}^* = \begin{cases} s_{\mathbf{i}} s_{\mathbf{lp}}^* &\text{if } \mathbf{j} = \mathbf{kp} \text{ for some } \mathbf{p} \in \mathbf{I}, \qquad \text{(a)} \\ s_{\mathbf{ip}} s_{\mathbf{l}}^* &\text{if } \mathbf{k} = \mathbf{jp} \text{ for some } \mathbf{p} \in \mathbf{I}, \qquad \text{(b)} \\ \lozenge & \text{otherwise} \qquad \text{(c)} \end{cases}
	\end{align}
	and
	\begin{align}
	uv = s_{\mathbf{k}} s_{\mathbf{l}}^* s_{\mathbf{i}} s_{\mathbf{j}}^* = \begin{cases} s_{\mathbf{k}} s_{\mathbf{jq}}^* &\text{if } \mathbf{l} = \mathbf{iq} \text{ for some } \mathbf{q} \in \mathbf{I}, \qquad \text{(I)} \\ s_{\mathbf{kq}} s_{\mathbf{j}}^* &\text{if } \mathbf{i} = \mathbf{lq} \text{ for some } \mathbf{q} \in \mathbf{I}, \qquad \text{(II)} \\ \lozenge & \text{otherwise.} \qquad \text{(III)} \end{cases}
	\end{align}
	Suppose $vu= s_{\mathbf{n}1} s_{\mathbf{n}}^*$ for some $\mathbf{n} \in \mathbf{I}$. Then $\tau(vu)=1$. Clearly (c) cannot hold.
	\begin{itemize}
		\item If (a) holds then $s_{\mathbf{i}} s_{\mathbf{lp}}^* = s_{\mathbf{n}1} s_{\mathbf{n}}^*$ and hence $\mathbf{lp}=\mathbf{n}$ and $\mathbf{i} = \mathbf{n}1 = \mathbf{lp}1$. This yields that we are in case (II) with $\mathbf{q} = \mathbf{p}1$. Hence $uv = s_{\mathbf{kq}} s_{\mathbf{j}}^* = s_{\mathbf{kp}1} s_{\mathbf{kp}}^*$ and thus $\tau(uv)=1$.
		\item If (b) holds then $s_{\mathbf{ip}} s_{\mathbf{l}}^* = s_{\mathbf{n}1} s_{\mathbf{n}}^*$ and hence $\mathbf{l}=\mathbf{n}$ and $\mathbf{ip} = \mathbf{n}1 = \mathbf{l}1$. Note that either:
		\begin{itemize}
			\item $\mathbf{p} = \emptyset$ so $\mathbf{i} = \mathbf{l}1$. This yields that we are in case (II) with $\mathbf{q} = 1$. Hence $uv = s_{\mathbf{kq}} s_{\mathbf{j}}^* =s_{\mathbf{k}1} s_{\mathbf{j}}^* =s_{\mathbf{jp}1} s_{\mathbf{j}}^* = s_{\mathbf{j}1} s_{\mathbf{j}}^*$ and thus $\tau(uv)=1$; or
			\item $\mathbf{p} = \mathbf{r}1$ for some $\mathbf{r} \in \mathbf{I}$ then $\mathbf{ir}1 = \mathbf{l}1$ and thus $\mathbf{ir} = \mathbf{l}$. Note that this yields that we are in case (I) with $\mathbf{q} = \mathbf{r}$. Hence $uv = s_{\mathbf{k}} s_{\mathbf{jq}}^* = s_{\mathbf{jp}} s_{\mathbf{jq}}^* = s_{\mathbf{jr}1} s_{\mathbf{jr}}^*$ and thus $\tau(uv)=1$.
		\end{itemize}
	\end{itemize}
	Alternatively, $vu \neq s_{\mathbf{m}1} s_{\mathbf{m}}^*$ for any $\mathbf{m} \in \mathbf{I}$. Then $\tau(vu)=0$.  If $\tau(uv)=1$ then $uv= s_{\mathbf{n}1} s_{\mathbf{n}}^*$ for some $\mathbf{n} \in \mathbf{I}$, so by the above argument with $u,v$ swapped, also $vu$ is of this form, a contradiction.  Hence $\tau(uv)=0$.
	
	In either case, $\tau(uv)= \tau(vu)$ follows, and so $\tau$ is a trace.  The ``consequently'' part of the theorem is a corollary of the first part and Lemma~\ref{l: char_trace}.
\end{proof}	

\subsection{There are \textit{no} non-zero bounded traces on $\mathcal{O}_2^p$}

For the rest of the section we fix some $p \in [1, \infty)$. Recall the definitions of the operators $A_1, A_2$ and $B_1, B_2$ on $\ell^p$. If $(e_n)$ denotes the standard unit vector basis on $\ell^p$ then we easily conclude that
\begin{align}\label{e: actionofai}
A_i e_n = \begin{cases} e_{\frac{n+i-1}{2}} &\text{if } n+i-1 \text{ is even}, \\ 0 & \text{otherwise} \end{cases}
\qquad (n \in \mathbb{N}, \, i \in \{1,2\}).
\end{align}
Similarly
\begin{align}\label{e: actionofbi}
B_i e_n =	e_{2n-i+1} \qquad (n \in \mathbb{N}, \, i \in \{1,2\}).
\end{align}
Given $\mathbf{i} \in \mathbf{I}$ we define
\begin{align}
A_{\mathbf{i}}:= \begin{cases}
A_{i_1} A_{i_2} \cdots A_{i_\alpha} &\text{ if } \mathbf{i}= (i_1,i_2, \ldots i_\alpha) \in \mathbf{I} \setminus \{\emptyset\}, \\
I_{\ell^p} &\text{ if } \mathbf{i}= \emptyset.
\end{cases}
\end{align}
Analogously, define $B_{\bf i}$.
\smallskip

Let us introduce a piece of notation. For a fixed $\mathbf{i} \in \mathbf{I}$ we define
\begin{align}
\mathbf{i}^{*}:= \begin{cases}
(i_{\alpha}, i_{\alpha -1}, \ldots i_1) &\text{ if } \mathbf{i}= (i_1,i_2, \ldots i_\alpha) \in \mathbf{I} \setminus \{\emptyset\}, \\
\emptyset &\text{ if } \mathbf{i} = \emptyset.
\end{cases}
\end{align}
Clearly $A_{\mathbf{i}^{*}} B_{\mathbf{i}} = I_{\ell^p}$ for any $\mathbf{i} \in \mathbf{I}$. \medskip

Given $\mathbf{i} \in \mathbf{I}$, we define the map $\rho_{\mathbf{i}} \colon \mathbb{N} \to \mathbb{R}$ by
\begin{align}
\rho_{\mathbf{i}}(n):= \begin{cases}
2^{- \alpha} \left( n+ \sum\limits_{l=1}^{\alpha} 2^{l-1} i_l -2^{\alpha} +1 \right) &\text{ if } \mathbf{i}= (i_1,i_2, \ldots i_{\alpha}) \in \mathbf{I} \setminus \{\emptyset\}, \\
n &\text{ if } \mathbf{i}= \emptyset
\end{cases} \quad (n \in \mathbb{N}).
\end{align}
An elementary induction argument on $\alpha$ and \eqref{e: actionofai} shows that
\begin{align}\label{e: actionofaistar}
A_{\mathbf{i}^{*}} e_n = \begin{cases} e_{\rho_{\mathbf{i}}(n)} &\text{if } n+ \sum\limits_{l=1}^{\alpha} 2^{l-1} i_l -2^{\alpha} +1 \in 2^{\alpha} \mathbb{N}, \\ 0 & \text{otherwise} \end{cases}
\qquad (n \in \mathbb{N}).
\end{align}
Note that the indices of the operators $A_i$ and $B_i$ are elements of $\{1,2\}$ instead of $\{0,1\}$; thus ---for technical purposes only--- we need the following lemma:
\begin{lemma}\label{l: biject_to_binary}
	Let $\alpha\in\mathbb N$.  The map
	\[ \theta_\alpha: {\bf I}_\alpha \rightarrow \{0,1,\cdots,2^\alpha-1\};
	\quad {\bf i}=(i_1,\cdots,i_\alpha) \mapsto 1 - 2^\alpha + \sum_{l=1}^{\alpha} 2^{l-1} i_l \]
	is a well-defined bijection.
\end{lemma}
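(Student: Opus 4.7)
The plan is to reduce the statement to the classical uniqueness of binary expansions. The key algebraic observation is that $\sum_{l=1}^{\alpha} 2^{l-1} = 2^{\alpha} - 1$, which allows us to absorb the affine shift $1 - 2^{\alpha}$ and rewrite
\[
\theta_{\alpha}({\bf i}) \;=\; \sum_{l=1}^{\alpha} 2^{l-1}(i_l - 1).
\]
Since each $i_l \in \{1,2\}$, the quantities $j_l := i_l - 1$ lie in $\{0,1\}$, so $\theta_{\alpha}({\bf i}) = \sum_{l=1}^{\alpha} 2^{l-1} j_l$ is precisely the non-negative integer whose base-$2$ expansion (read least-significant-bit first) has digits $j_1, j_2, \ldots, j_{\alpha}$.

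From here I would proceed as follows. First, well-definedness: any such sum is a non-negative integer bounded above by $\sum_{l=1}^{\alpha} 2^{l-1} = 2^{\alpha} - 1$, so $\theta_{\alpha}$ takes values in $\{0, 1, \ldots, 2^{\alpha} - 1\}$. Second, surjectivity/injectivity: every $n \in \{0, 1, \ldots, 2^{\alpha} - 1\}$ admits a unique sequence $(j_1, \ldots, j_{\alpha}) \in \{0,1\}^{\alpha}$ with $n = \sum_{l=1}^{\alpha} 2^{l-1} j_l$, by the standard uniqueness of binary expansions (of integers of absolute value less than $2^{\alpha}$). Translating back via $i_l = j_l + 1$ yields exactly one ${\bf i} \in {\bf I}_{\alpha}$ with $\theta_{\alpha}({\bf i}) = n$. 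This simultaneously establishes that $\theta_{\alpha}$ is injective and surjective, and hence a bijection.

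There is no real obstacle here: the lemma is pure bookkeeping, and the only point worth emphasising is that the peculiar-looking shift $1 - 2^{\alpha}$ in the definition of $\theta_{\alpha}$ is chosen precisely to convert $\{1,2\}$-valued digits into $\{0,1\}$-valued digits. If preferred, one could alternatively prove bijectivity directly by induction on $\alpha$, using the recursion $\theta_{\alpha}(i_1, \ldots, i_{\alpha}) = (i_1 - 1) + 2 \, \theta_{\alpha-1}(i_2, \ldots, i_{\alpha})$, which expresses $\theta_{\alpha}$ in terms of $\theta_{\alpha-1}$ and the parity of the output; but the binary-expansion argument above is cleaner and essentially a one-liner.
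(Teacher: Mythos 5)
Your proof is correct and is essentially identical to the paper's: both substitute $j_l = i_l - 1$ to rewrite $\theta_\alpha(\mathbf{i}) = \sum_{l=1}^{\alpha} 2^{l-1} j_l$ and then invoke the existence and uniqueness of binary expansions of integers in $\{0,1,\ldots,2^{\alpha}-1\}$. No differences worth noting.
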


\begin{proof}
	Let ${\bf i}=(i_1,\cdots,i_\alpha) \in \mathbf{I}_{\alpha}$. Set $j_l := i_l - 1$ for each $l\in \{1, \ldots, \alpha\}$, so that $j_l\in\{0,1\}$.  Then
	\[ \theta_\alpha({\bf i}) = 1 - 2^\alpha + \sum_{l=1}^\alpha 2^{l-1} (j_l+1)
	= 1 - 2^\alpha + (2^\alpha-1) + \sum_{l=1}^\alpha 2^{l-1} j_l
	= \sum_{l=1}^\alpha 2^{l-1} j_l. \]
	This sum is indeed a member of $\{0,1,\cdots,2^\alpha-1\}$.  The result now follows by considering the binary decomposition of an arbitrary element of $\{0,1, \ldots, 2^{\alpha}-1\}$.
\end{proof}
\begin{lemma}\label{l: divisible}
	Let $n \in \mathbb{N}$ and let $\alpha \in \mathbb{N}_0$ be fixed. There exists a unique $\mathbf{i}=(i_1,i_2, \ldots, i_{\alpha}) \in \mathbf{I}_{\alpha}$ such that $n+ \sum\limits_{l=1}^{\alpha} 2^{l-1} i_l -2^{\alpha} +1$ is divisible by $2^{\alpha}$.
\end{lemma}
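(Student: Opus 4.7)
My plan is to reduce the lemma to Lemma~\ref{l: biject_to_binary} by observing that the expression $n + \sum_{l=1}^{\alpha} 2^{l-1} i_l - 2^{\alpha} + 1$ is exactly $n + \theta_\alpha(\mathbf{i})$, where $\theta_\alpha$ is the bijection between $\mathbf{I}_\alpha$ and $\{0, 1, \ldots, 2^\alpha - 1\}$ established in the preceding lemma.

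The case $\alpha = 0$ is trivial: $\mathbf{I}_0 = \{\emptyset\}$, the sum is empty, and the divisibility condition reads ``$n$ divisible by $1$'', which is always satisfied; uniqueness is automatic.

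For $\alpha \geqslant 1$, I would proceed as follows. By Euclidean division there is a unique $r \in \{0, 1, \ldots, 2^\alpha - 1\}$ with $-n \equiv r \pmod{2^\alpha}$. The condition that $2^\alpha$ divides $n + \theta_\alpha(\mathbf{i})$ is then equivalent to $\theta_\alpha(\mathbf{i}) \equiv r \pmod{2^\alpha}$. Because $\theta_\alpha(\mathbf{i}) \in \{0, 1, \ldots, 2^\alpha - 1\}$ and any two distinct elements of this set differ by less than $2^\alpha$, the congruence forces the equality $\theta_\alpha(\mathbf{i}) = r$. Bijectivity of $\theta_\alpha$ then yields a unique $\mathbf{i} \in \mathbf{I}_\alpha$ with this property.

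I do not expect any real obstacle here; the work has effectively been done in Lemma~\ref{l: biject_to_binary}, and all that remains is a one-line translation of the divisibility condition into a statement about reduction modulo $2^\alpha$, followed by invoking the bijection.
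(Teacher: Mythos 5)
Your proposal is correct and follows exactly the paper's own argument: the paper likewise rewrites the expression as $n+\theta_\alpha(\mathbf{i})$ and reduces the claim to the bijectivity of $\theta_\alpha$ from Lemma~\ref{l: biject_to_binary}. The only difference is that you spell out the $\alpha=0$ case and the reduction-modulo-$2^\alpha$ step slightly more explicitly, which is harmless.
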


\begin{proof}
	Working modulo $2^\alpha$, we wish to show that there is a unique $\bf i$ with $n + \theta_\alpha({\bf i}) \equiv 0 \mod 2^\alpha$, or equivalently, $\theta_\alpha({\bf i}) \equiv -n \mod 2^\alpha$.  This follows directly from Lemma~\ref{l: biject_to_binary}.
\end{proof}

\begin{proposition}\label{e: Ak_disjoint_coranges}
	Let $\alpha\in\mathbb N_0$.  We have that
	\begin{align}
	\sum_{{\bf k} \in {\bf I}_\alpha} \| A_{\bf k^*} x \|^p = \|x\|^p    \qquad (x\in\ell^p).
	\end{align}
\end{proposition}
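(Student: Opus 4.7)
The plan is to exploit formula \eqref{e: actionofaistar} together with Lemma~\ref{l: divisible} to reduce the identity to a partition argument on the standard basis. The case $\alpha = 0$ is trivial since $\mathbf{I}_0 = \{\emptyset\}$ and $A_\emptyset = I_{\ell^p}$, so I would assume $\alpha \geqslant 1$. For each $\mathbf{k} = (k_1, \ldots, k_\alpha) \in \mathbf{I}_\alpha$ set
\[ E_{\mathbf{k}} := \Bigl\{n \in \mathbb{N} : n + \sum_{l=1}^\alpha 2^{l-1} k_l - 2^\alpha + 1 \in 2^\alpha \mathbb{N}\Bigr\}, \]
so that by \eqref{e: actionofaistar}, $A_{\mathbf{k}^{*}} e_n = e_{\rho_{\mathbf{k}}(n)}$ if $n \in E_{\mathbf{k}}$, and $A_{\mathbf{k}^{*}} e_n = 0$ otherwise.

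First I would record two elementary observations. One: the family $\{E_{\mathbf{k}}\}_{\mathbf{k} \in \mathbf{I}_\alpha}$ partitions $\mathbb{N}$ --- this is precisely the content of Lemma~\ref{l: divisible}. Two: for each fixed $\mathbf{k}$ the restriction of $\rho_{\mathbf{k}}$ to $E_{\mathbf{k}}$ is a bijection onto $\mathbb{N}$; indeed, if one sets $m := \rho_{\mathbf{k}}(n)$ then $n = 2^\alpha m - \sum_{l} 2^{l-1} k_l + 2^\alpha - 1$, and one checks directly that this formula produces an element of $E_{\mathbf{k}}$ for every $m \in \mathbb{N}$. Combining these two facts with the action of $A_{\mathbf{k}^{*}}$ on basis vectors yields, by linearity and continuity, that for any $x = \sum_n x_n e_n \in \ell^p$,
\[ A_{\mathbf{k}^{*}} x \;=\; \sum_{n \in E_{\mathbf{k}}} x_n e_{\rho_{\mathbf{k}}(n)}, \]
and since the $e_{\rho_{\mathbf{k}}(n)}$ for $n \in E_{\mathbf{k}}$ are distinct basis vectors, $\|A_{\mathbf{k}^{*}} x\|^p = \sum_{n \in E_{\mathbf{k}}} |x_n|^p$.

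Summing over $\mathbf{k} \in \mathbf{I}_\alpha$ and using the partition property of the $E_{\mathbf{k}}$ then gives
\[ \sum_{\mathbf{k} \in \mathbf{I}_\alpha} \|A_{\mathbf{k}^{*}} x\|^p \;=\; \sum_{\mathbf{k} \in \mathbf{I}_\alpha} \sum_{n \in E_{\mathbf{k}}} |x_n|^p \;=\; \sum_{n \in \mathbb{N}} |x_n|^p \;=\; \|x\|^p, \]
which is the desired identity. Essentially no serious obstacle appears; all the combinatorial content has been packaged into Lemmas~\ref{l: biject_to_binary} and~\ref{l: divisible}, and the only routine step is verifying bijectivity of $\rho_{\mathbf{k}}$ on $E_{\mathbf{k}}$. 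As a sanity check, an entirely equivalent argument proceeds by induction on $\alpha$: the case $\alpha = 1$ reduces to $\|A_1 y\|^p + \|A_2 y\|^p = \|y\|^p$ (immediate, since $A_1$ and $A_2$ pick off the even- and odd-indexed components of $y$, respectively), and the inductive step uses the bijection $\mathbf{I}_{\alpha+1} \leftrightarrow \mathbf{I}_\alpha \times \{1,2\}$, $\mathbf{m} \leftrightarrow (\mathbf{k}, i)$, together with the factorisation $A_{\mathbf{m}^{*}} = A_i A_{\mathbf{k}^{*}}$.
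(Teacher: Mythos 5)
Your argument is correct and is essentially the paper's own proof: both compute $\|A_{\mathbf{k}^*}x\|^p = \sum_{\{n\,:\,\rho_{\mathbf{k}}(n)\in\mathbb N\}}|x_n|^p$ using injectivity of $\rho_{\mathbf{k}}$ and formula \eqref{e: actionofaistar}, and then invoke Lemma~\ref{l: divisible} to see that the index sets (your $E_{\mathbf{k}}$) partition $\mathbb N$. The only differences are cosmetic: you additionally verify surjectivity of $\rho_{\mathbf{k}}|_{E_{\mathbf{k}}}$ (not needed) and sketch an equivalent induction, neither of which changes the substance.
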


\begin{proof}
	If $\alpha=0$ then the claim is trivial, so assume $\alpha\geqslant 1$.  By (\ref{e: actionofaistar}), and as clearly $\rho_{\bf k} \colon \mathbb N \rightarrow \mathbb R$ is injective for each $\mathbf{k} \in \mathbf{I}_{\alpha}$, we see that
	\begin{align}
	\sum_{{\bf k} \in {\bf I}_\alpha} \| A_{\bf k^*} x \|^p
	= \sum_{{\bf k} \in {\bf I}_\alpha} \Big\| \sum_{ \{n:\rho_{\bf k}(n) \in \mathbb N\} } x(n) e_{ \rho_{\bf k}(n) } \Big\|^p
	= \sum_{{\bf k} \in {\bf I}_\alpha} \sum_{ \{n:\rho_{\bf k}(n) \in \mathbb N\} } |x(n)|^p.
	\end{align}
	By Lemma~\ref{l: divisible} we see that for each $n \in \mathbb{N}$ there is a unique ${\bf k}\in{\bf I}_\alpha$ with $\rho_{\bf k}(n) \in \mathbb N$.  Hence
	\begin{align}
	\|x\|^p = \sum\limits_{n=1}^{\infty} |x(n)|^p = \sum_{{\bf k} \in {\bf I}_\alpha} \sum_{ \{n:\rho_{\bf k}(n) \in \mathbb N\} } |x(n)|^p.
	\end{align}
	The result follows.
\end{proof}

We require one more piece of notation.  Given $\mathbf{i} \in \mathbf{I}$ we define the map $\sigma_{\mathbf{i}} \colon \mathbb{N} \to \mathbb{N}$ by
\begin{align}
\sigma_{\mathbf{i}}(n):= \begin{cases}
2^{\alpha} n - \sum\limits_{l=1}^{\alpha} 2^{l-1} i_l +2^{\alpha} -1  &\text{ if } \mathbf{i}= (i_1,i_2, \ldots i_{\alpha}) \in \mathbf{I} \setminus \{\emptyset\}, \\
n &\text{ if } \mathbf{i}= \emptyset
\end{cases} \quad (n \in \mathbb{N}).
\end{align}
An elementary induction argument on $\alpha$ and \eqref{e: actionofbi} show that
\begin{align}\label{e: actionofb}
B_{\mathbf{i}} e_n = e_{\sigma_{\mathbf{i}}(n)} \qquad (n \in \mathbb{N}).
\end{align}

\begin{proposition}\label{e: Bk_disjoint_ranges}
	Let $\alpha\in\mathbb N_0$. Then for any family of vectors $(x_{\bf k})_{{\bf k} \in {\bf I}_\alpha}$ in $\ell^p$,
	\begin{align}
	\Big\| \sum_{{\bf k} \in {\bf I}_\alpha} B_{\bf k} x_{\bf k} \Big\|^p
	= \sum_{{\bf k} \in {\bf I}_\alpha} \| x_{\bf k} \|^p.
	\end{align}
	In particular, $B_{\bf j}$ is an isometry for each ${\bf j} \in {\bf I}_\alpha$.
\end{proposition}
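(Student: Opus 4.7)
The plan is to mirror the strategy of Proposition~\ref{e: Ak_disjoint_coranges}, but now using the map $\sigma_{\mathbf{k}}$ instead of $\rho_{\mathbf{k}}$. The key combinatorial fact is that the maps $\{ \sigma_{\mathbf{k}} \colon \mathbf{k} \in \mathbf{I}_\alpha \}$ are each injective, and their ranges partition $\mathbb{N}$. This is essentially dual to what was established for the $\rho_{\mathbf{k}}$.

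First I would note that injectivity of each $\sigma_{\mathbf{k}}$ is immediate from the affine formula defining it. To verify the partition property, I would observe that given $\mathbf{k} \in \mathbf{I}_\alpha$ and $m\in\mathbb{N}$, direct substitution gives
\[
\sigma_{\mathbf{k}}(m) + \sum_{l=1}^\alpha 2^{l-1}k_l - 2^\alpha + 1 = 2^\alpha m,
\]
which is divisible by $2^\alpha$. Conversely, given any $n\in\mathbb{N}$, Lemma~\ref{l: divisible} supplies a unique $\mathbf{i}\in \mathbf{I}_\alpha$ such that $n+\sum_{l=1}^\alpha 2^{l-1} i_l-2^\alpha + 1$ is divisible by $2^\alpha$; setting $m:=\rho_{\mathbf{i}}(n)\in\mathbb{N}$ and rearranging gives $n=\sigma_{\mathbf{i}}(m)$. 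Thus $n$ lies in the range of exactly one $\sigma_{\mathbf{k}}$.

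Second, using \eqref{e: actionofb} together with the above,
\[
\sum_{\mathbf{k} \in \mathbf{I}_\alpha} B_{\mathbf{k}} x_{\mathbf{k}} = \sum_{\mathbf{k} \in \mathbf{I}_\alpha} \sum_{n=1}^{\infty} x_{\mathbf{k}}(n) \, e_{\sigma_{\mathbf{k}}(n)},
\]
and since the indices $\sigma_{\mathbf{k}}(n)$ traverse each natural number exactly once as $(\mathbf{k},n)$ varies over $\mathbf{I}_\alpha \times \mathbb{N}$, these are the coefficients of pairwise distinct basis vectors. Therefore
\[
\Big\| \sum_{\mathbf{k} \in \mathbf{I}_\alpha} B_{\mathbf{k}} x_{\mathbf{k}} \Big\|^p = \sum_{\mathbf{k} \in \mathbf{I}_\alpha} \sum_{n=1}^{\infty} |x_{\mathbf{k}}(n)|^p = \sum_{\mathbf{k} \in \mathbf{I}_\alpha} \|x_{\mathbf{k}}\|^p,
\]
which is the main identity. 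For the ``in particular'' part, fix $\mathbf{j}\in\mathbf{I}_\alpha$, $x\in\ell^p$, and apply the identity with $x_{\mathbf{j}}:=x$ and $x_{\mathbf{k}}:=0$ for $\mathbf{k}\neq\mathbf{j}$, giving $\|B_{\mathbf{j}} x\|^p = \|x\|^p$.

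There is no real obstacle here; the result follows almost mechanically once one has the partition statement for $\{\sigma_{\mathbf{k}}(\mathbb{N})\}$. The only step requiring a moment's thought is verifying that the ranges of the $\sigma_{\mathbf{k}}$ are exactly the fibres picked out by Lemma~\ref{l: divisible}, but this is just the observation that $\sigma_{\mathbf{i}}$ and $\rho_{\mathbf{i}}$ are mutually inverse affine maps on the appropriate arithmetic progression.
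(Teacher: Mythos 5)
Your proof is correct and follows essentially the same strategy as the paper: both arguments reduce the norm identity to the fact that the maps $\sigma_{\mathbf{k}}$, ${\bf k}\in{\bf I}_\alpha$, are injective with pairwise disjoint ranges, so that the vectors $B_{\mathbf{k}}x_{\mathbf{k}}$ are supported on disjoint sets of basis vectors. The only (harmless) difference is that you establish the stronger statement that the ranges of the $\sigma_{\mathbf{k}}$ partition $\mathbb{N}$ by invoking Lemma~\ref{l: divisible} and the inverse relationship with $\rho_{\mathbf{k}}$, whereas the paper verifies disjointness directly via the estimate $|n-m|=2^{-\alpha}|\theta_\alpha({\bf j})-\theta_\alpha({\bf k})|<1$.
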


\begin{proof}
	The $\alpha=0$ case is trivial, so let $\alpha\geqslant 1$. We \textit{claim} that the ranges of $\sigma_{\bf j}$ and $\sigma_{\bf k}$ are disjoint for distinct $\mathbf{j}, \mathbf{k} \in \mathbf{I}_{\alpha}$. Indeed, suppose $\sigma_{\bf j}(n) = \sigma_{\bf k}(m)$ for some $n,m \in \mathbb{N}$ and $\mathbf{j}, \mathbf{k} \in \mathbf{I}_{\alpha}$. In the notation of Lemma~\ref{l: biject_to_binary}, this is equivalent to $2^\alpha n - \theta_\alpha({\bf j}) = 2^\alpha m - \theta_\alpha({\bf k})$; thus $|n-m| = 2^{- \alpha}|\theta_\alpha({\bf j}) - \theta_\alpha({\bf k})| \leqslant (2^{\alpha}-1)2^{-\alpha} <1$. Therefore $n=m$ and hence $\mathbf{j} = \mathbf{k}$ as $\theta_{\alpha}$ is bijective. From the claim and \eqref{e: actionofb} we obtain
	\begin{align}
	\Big\| \sum_{{\bf k} \in {\bf I}_\alpha} B_{\bf k} x_{\bf k} \Big\|^p = \Big\| \sum_{{\bf k} \in {\bf I}_\alpha} \sum\limits_{n=1}^{\infty} x_{\bf k}(n) e_{\sigma_{\mathbf{k}}(n)} \Big\|^p = \sum_{{\bf k} \in {\bf I}_\alpha} \sum\limits_{n=1}^{\infty} | x_{\bf k}(n) |^p = \sum_{{\bf k} \in {\bf I}_\alpha} \| x_{\bf k} \|^p,
	\end{align}
	as required. To see the ``in particular part'', let ${\bf j} \in {\bf I}_\alpha$ and $x \in \ell^p$. Set $x_{\bf j}:= x$ and $x_{\bf k}:=0$ for each ${\bf k} \in {\bf I}_\alpha \setminus \{ {\bf j} \}$, then $\|B_{{\bf j}} x \| = \|x\|$ follows from the first part.
\end{proof}

\begin{lemma}\label{l: isometry}
	Let $\alpha \in \mathbb{N}_0$ and let $\mathbf{j} \in \mathbf{I}$.  The operator $\sum_{\mathbf{k} \in \mathbf{I}_{\alpha}} B_{\mathbf{kj}} A_{\mathbf{k}^{*}}$ is an isometry on $\ell^p$.
\end{lemma}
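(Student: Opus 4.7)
The plan is to compute $\|Tx\|^p$ directly for an arbitrary $x\in\ell^p$, where $T := \sum_{\mathbf{k}\in\mathbf{I}_\alpha} B_{\mathbf{kj}} A_{\mathbf{k}^*}$, and to show that it equals $\|x\|^p$. The idea is to split each $B_{\mathbf{kj}}$ as the product $B_{\mathbf{k}} B_{\mathbf{j}}$ (which follows straight from the definition of $B_{\mathbf i}$ via a telescoping product), so that the outer summation is indexed over $\mathbf{I}_\alpha$ with an outer factor of $B_{\mathbf{k}}$. This will allow us to apply Proposition~\ref{e: Bk_disjoint_ranges}, which is exactly the statement that $\|\sum_{\mathbf{k}\in\mathbf{I}_\alpha} B_{\mathbf{k}} y_{\mathbf{k}}\|^p = \sum_{\mathbf{k}\in\mathbf{I}_\alpha}\|y_{\mathbf{k}}\|^p$ for any choice of vectors $y_{\mathbf{k}}\in\ell^p$.

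Concretely, setting $y_{\mathbf{k}} := B_{\mathbf{j}} A_{\mathbf{k}^*} x$, Proposition~\ref{e: Bk_disjoint_ranges} will yield
\begin{align}
\|Tx\|^p = \sum_{\mathbf{k}\in\mathbf{I}_\alpha} \| B_{\mathbf{j}} A_{\mathbf{k}^*} x\|^p.
\end{align}
The ``in particular'' part of Proposition~\ref{e: Bk_disjoint_ranges} tells us that $B_{\mathbf{j}}$ is itself an isometry, so each summand on the right equals $\|A_{\mathbf{k}^*} x\|^p$. At that point, Proposition~\ref{e: Ak_disjoint_coranges} closes the argument, since it gives precisely $\sum_{\mathbf{k}\in\mathbf{I}_\alpha}\|A_{\mathbf{k}^*} x\|^p = \|x\|^p$.

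There is essentially no obstacle here: all the combinatorial and geometric work has been done in Propositions~\ref{e: Ak_disjoint_coranges} and~\ref{e: Bk_disjoint_ranges}, and the proof of the lemma reduces to a three-line chain of equalities once we notice the factorisation $B_{\mathbf{kj}} = B_{\mathbf{k}} B_{\mathbf{j}}$. The only point worth a brief verification is this factorisation itself; but by induction on the length of $\mathbf{j}$ (or directly from the definition $B_{\mathbf{i}} = B_{i_1}\cdots B_{i_\alpha}$ by concatenation), this is immediate. The trivial case $\alpha = 0$ is also worth mentioning: there $T = B_{\mathbf{j}}$, which is an isometry by Proposition~\ref{e: Bk_disjoint_ranges}.
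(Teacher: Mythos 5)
Your proof is correct and is essentially identical to the paper's: both factor $B_{\mathbf{kj}} = B_{\mathbf{k}} B_{\mathbf{j}}$, apply Proposition~\ref{e: Bk_disjoint_ranges} with $y_{\mathbf{k}} = B_{\mathbf{j}} A_{\mathbf{k}^*}x$, use that $B_{\mathbf{j}}$ is an isometry, and finish with Proposition~\ref{e: Ak_disjoint_coranges}. No issues.
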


\begin{proof}
	We combine Propositions~\ref{e: Ak_disjoint_coranges} and~\ref{e: Bk_disjoint_ranges} to see that
	\begin{align}
	\Big\| \sum_{\mathbf{k} \in \mathbf{I}_{\alpha}} B_{\mathbf{kj}} A_{\mathbf{k}^{*}} x\Big\|^p
	=\Big\| \sum_{\mathbf{k} \in \mathbf{I}_{\alpha}} B_{\mathbf{k}} (B_{\mathbf{j}} A_{\mathbf{k}^{*}} x) \Big\|^p
	= \sum_{\mathbf{k} \in \mathbf{I}_{\alpha}} \| B_{\bf j} A_{\bf k^*} x \|^p
	= \sum_{\mathbf{k} \in \mathbf{I}_{\alpha}} \| A_{\bf k^*} x \|^p
	= \|x\|^p 
	\end{align}
	for any $x\in \ell^p$.
\end{proof}

\begin{lemma}\label{l: contraction}
	Let $\alpha \in \mathbb{N}_0$ and let $\mathbf{j} \in \mathbf{I}$.  The operator $\sum_{\mathbf{k} \in \mathbf{I}_{\alpha}} B_{\mathbf{k}} A_{(\mathbf{kj})^{*}}$ has norm less than or equal to $1$.
\end{lemma}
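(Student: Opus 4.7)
My plan is to imitate very closely the proof of Lemma~\ref{l: isometry}, with one extra estimate: since $\mathbf{k} \in \mathbf{I}_\alpha$ sits to the left of $\mathbf{j}$ in $\mathbf{kj}$, taking the $*$-reversal gives $(\mathbf{kj})^{*} = \mathbf{j}^{*}\mathbf{k}^{*}$, and consequently $A_{(\mathbf{kj})^{*}} = A_{\mathbf{j}^{*}} A_{\mathbf{k}^{*}}$.  Thus for any $x \in \ell^p$,
\begin{align*}
\Bigl\| \sum_{\mathbf{k} \in \mathbf{I}_\alpha} B_{\mathbf{k}} A_{(\mathbf{kj})^{*}} x \Bigr\|^p
= \Bigl\| \sum_{\mathbf{k} \in \mathbf{I}_\alpha} B_{\mathbf{k}} \bigl( A_{\mathbf{j}^{*}} A_{\mathbf{k}^{*}} x \bigr) \Bigr\|^p.
\end{align*}

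Next I would invoke Proposition~\ref{e: Bk_disjoint_ranges} applied to the family $\bigl( A_{\mathbf{j}^{*}} A_{\mathbf{k}^{*}} x \bigr)_{\mathbf{k} \in \mathbf{I}_\alpha}$ to turn the norm on the right-hand side into $\sum_{\mathbf{k} \in \mathbf{I}_\alpha} \| A_{\mathbf{j}^{*}} A_{\mathbf{k}^{*}} x \|^p$.  Since each $A_i$ has operator norm $1$, $A_{\mathbf{j}^{*}}$ is a product of such operators, hence $\| A_{\mathbf{j}^{*}} \| \leqslant 1$, so each summand is bounded above by $\| A_{\mathbf{k}^{*}} x \|^p$.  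Finally, I would apply Proposition~\ref{e: Ak_disjoint_coranges} to conclude
\begin{align*}
\sum_{\mathbf{k} \in \mathbf{I}_\alpha} \| A_{\mathbf{j}^{*}} A_{\mathbf{k}^{*}} x \|^p
\leqslant \sum_{\mathbf{k} \in \mathbf{I}_\alpha} \| A_{\mathbf{k}^{*}} x \|^p = \|x\|^p,
\end{align*}
whence the operator in question has norm at most $1$.

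There is really no obstacle here, since the only new ingredient beyond Lemma~\ref{l: isometry} is the observation that the $\mathbf{j}$ is now attached to $\mathbf{k}$ on the right inside $A_{(\cdot)^*}$ rather than to $\mathbf{k}$ on the right inside $B_{(\cdot)}$; this keeps Proposition~\ref{e: Bk_disjoint_ranges} directly applicable (the outer operator is still just $B_{\mathbf{k}}$) while the surplus factor $A_{\mathbf{j}^{*}}$ is merely a contraction that can be discarded in the inequality.  I would therefore expect the written proof to be no more than a few lines long.
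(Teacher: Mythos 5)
Your proposal is correct and follows exactly the paper's own argument: decompose $A_{(\mathbf{kj})^{*}} = A_{\mathbf{j}^{*}} A_{\mathbf{k}^{*}}$, apply Proposition~\ref{e: Bk_disjoint_ranges} to pull the sum outside the norm, bound each term using $\|A_{\mathbf{j}^{*}}\| \leqslant 1$, and finish with Proposition~\ref{e: Ak_disjoint_coranges}. Nothing is missing.
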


\begin{proof}
	We combine Propositions~\ref{e: Ak_disjoint_coranges} and~\ref{e: Bk_disjoint_ranges} to see that for any $x\in \ell^p$,
	\begin{align}
	\Big\| \sum_{\mathbf{k} \in \mathbf{I}_{\alpha}} B_{\mathbf{k}} A_{(\mathbf{kj})^{*}} x \Big\|^p
	= \sum_{\mathbf{k} \in \mathbf{I}_{\alpha}} \| A_{({\bf kj})^{*}} x \|^p
	= \sum_{\mathbf{k} \in \mathbf{I}_{\alpha}} \| A_{\bf j^*} A_{\bf k^*} x \|^p
	\leqslant \sum_{\mathbf{k} \in \mathbf{I}_{\alpha}} \| A_{\bf k^*} x \|^p
	= \|x\|^p,
	\end{align}
	as $\|A_{\bf j^*}\|\leqslant 1$.
\end{proof}

\begin{theorem}\label{t: no_trace_on_o_2}
	There are no non-zero bounded traces on $\mathcal{O}_2^p$. 
\end{theorem}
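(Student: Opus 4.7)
The plan is to show that any bounded trace $\tau$ on $\mc O_2^p$ vanishes on every monomial $B_{\mathbf{i}} A_{\mathbf{j}^*}$ with $\mathbf{i}, \mathbf{j} \in \mathbf{I}$, and then to invoke the fact that $\mc O_2^p = \overline{\Theta_p(\mc A / \mc J)}$ coincides with the closed linear span of these monomials, so that continuity of $\tau$ forces $\tau \equiv 0$.

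The key step will be to deduce $\tau(B_{\mathbf{j}}) = 0$ for every $\mathbf{j} \in \mathbf{I}$ (including $\mathbf{j} = \emptyset$, which gives $\tau(I_{\ell^p}) = 0$ as a special case). For this I would fix $\mathbf{j}$, and for each $\alpha \in \mathbb{N}_0$ consider
\[T_\alpha := \sum_{\mathbf{k} \in \mathbf{I}_\alpha} B_{\mathbf{k}\mathbf{j}} A_{\mathbf{k}^*} \in \mc O_2^p,\]
which is an isometry on $\ell^p$ by Lemma~\ref{l: isometry}. Using the cyclic property of $\tau$ together with the identity $A_{\mathbf{k}^*} B_{\mathbf{k}\mathbf{j}} = A_{\mathbf{k}^*} B_{\mathbf{k}} B_{\mathbf{j}} = B_{\mathbf{j}}$, one computes
\[\tau(T_\alpha) = \sum_{\mathbf{k} \in \mathbf{I}_\alpha} \tau(A_{\mathbf{k}^*} B_{\mathbf{k}\mathbf{j}}) = 2^\alpha \tau(B_{\mathbf{j}}),\]
while $|\tau(T_\alpha)| \leqslant \|\tau\| \cdot \|T_\alpha\| = \|\tau\|$; letting $\alpha \to \infty$ forces $\tau(B_{\mathbf{j}}) = 0$. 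The same game, played with the contractions $U_\alpha := \sum_{\mathbf{k} \in \mathbf{I}_\alpha} B_{\mathbf{k}} A_{(\mathbf{k}\mathbf{j})^*}$ from Lemma~\ref{l: contraction} together with the factorisation $A_{(\mathbf{k}\mathbf{j})^*} B_{\mathbf{k}} = A_{\mathbf{j}^*} A_{\mathbf{k}^*} B_{\mathbf{k}} = A_{\mathbf{j}^*}$, will yield $\tau(A_{\mathbf{j}^*}) = 0$ for every $\mathbf{j} \in \mathbf{I}$.

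To conclude, for arbitrary $\mathbf{i}, \mathbf{j} \in \mathbf{I}$ the cyclic property rewrites $\tau(B_{\mathbf{i}} A_{\mathbf{j}^*}) = \tau(A_{\mathbf{j}^*} B_{\mathbf{i}})$, and by the semigroup calculation underlying Lemma~\ref{l: nuts_and_bolts}(1) the product $A_{\mathbf{j}^*} B_{\mathbf{i}}$ equals $B_{\mathbf{p}}$ (if $\mathbf{i} = \mathbf{j}\mathbf{p}$), $A_{\mathbf{q}^*}$ (if $\mathbf{j} = \mathbf{i}\mathbf{q}$), or $0$ otherwise; in every case the previous step shows $\tau(B_{\mathbf{i}} A_{\mathbf{j}^*}) = 0$. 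The main obstacle is to exhibit, for each $\mathbf{j}$, a sequence of elements of $\mc O_2^p$ whose $\tau$-values grow like $2^\alpha \tau(B_{\mathbf{j}})$ while their operator norms remain uniformly bounded. This is where the norm-control in Lemmas~\ref{l: isometry} and~\ref{l: contraction}, resting in turn on the disjointness of ranges and coranges established in Propositions~\ref{e: Ak_disjoint_coranges} and~\ref{e: Bk_disjoint_ranges}, does all the real work.
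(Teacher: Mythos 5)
Your proposal is correct and follows essentially the same route as the paper: the decisive estimate in both cases is that $\sum_{\mathbf{k}\in\mathbf{I}_\alpha} B_{\mathbf{kj}}A_{\mathbf{k}^*}$ (an isometry, Lemma~\ref{l: isometry}) and $\sum_{\mathbf{k}\in\mathbf{I}_\alpha} B_{\mathbf{k}}A_{(\mathbf{kj})^*}$ (a contraction, Lemma~\ref{l: contraction}) have trace $2^\alpha\tau(B_{\mathbf{j}})$ and $2^\alpha\tau(A_{\mathbf{j}^*})$ respectively while their norms stay bounded, forcing these traces to vanish. The only cosmetic difference is that the paper argues by contradiction after pulling the trace back to $\mathcal A/\mathcal J$ and normalising at a single semigroup element, whereas you argue directly that $\tau$ kills every monomial $B_{\mathbf{i}}A_{\mathbf{j}^*}$ and conclude by density of their span in $\mathcal O_2^p$.
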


\begin{proof}
	Assume towards a contradiction that $\tau'$ is a non-zero bounded trace on $\mathcal{O}_2^p$. As $\Theta_p \colon \mathcal{A} / \mathcal{J} \to \mathcal{B}(\ell^p)$ is a continuous algebra homomorphism with $\overline{\Ran}(\Theta_p)= \mathcal{O}_2^p$ by Proposition~\ref{p: rep_quot}, it follows that $\tau := \tau' \circ \Theta_p |^{\mathcal{O}_2^p}$ is a non-zero bounded trace on $\mathcal{A} / \mathcal{J}$.  By an abuse of notation, we identify $\tau$ with a member of $\ell^{\infty}(\text{Cu}_2 \setminus \{\lozenge\})$, see the discussion before Lemma~\ref{l: char_trace}.
	
	As $\tau$ is non-zero, after possibly rescaling, we obtain that there is a $w \in \text{Cu}_2 \setminus \{\lozenge\}$ such that $\tau(w) =1$. Note that there is no $u \in \text{Cu}_2 \setminus \{\lozenge\}$ such that $w=s_i u s_j^*$, where $i,j \in \{1,2\}$ are distinct. Indeed, otherwise $\tau(w)= \tau(s_i u s_j^*) = \tau(s_j^*s_i u)=0$. Also note that if $w= s_i u s_i^*$ for some $i \in \{1,2\}$, then $\tau(w)= \tau(s_i u s_i^*) = \tau(s_i^*s_i u)=\tau(u)$. Hence we may assume without loss of generality that $w= s_{\mathbf{j}}$ or $w=s_{\mathbf{j}}^*$ for some $\mathbf{j} \in \mathbf{I} \setminus \{\emptyset\}$.
	
	First suppose that $w= s_{\mathbf{j}}$.  For every $\mathbf{k} \in \mathbf{I}$,
	\begin{align}\label{e: trace_one}
	\left\langle B_{\mathbf{kj}} A_{\mathbf{k}^{*}}, \tau' \right\rangle &= \left\langle B_{\mathbf{k}} B_{\mathbf{j}} A_{\mathbf{k}^{*}}, \tau' \right\rangle = \left\langle A_{\mathbf{k}^{*}} B_{\mathbf{k}} B_{\mathbf{j}}, \tau' \right\rangle = \left\langle B_{\mathbf{j}}, \tau' \right\rangle \notag \\
	&= \left\langle \Theta(\pi_{\mathcal{J}}(\delta_{s_{\mathbf{j}}})), \tau' \right\rangle = \left\langle \pi_{\mathcal{J}} (\delta_{s_{\mathbf{j}}}), \tau \right\rangle = \tau(s_{\mathbf{j}}) =1.
	\end{align}
	Using \eqref{e: trace_one} and Lemma~\ref{l: isometry} we obtain for every $\alpha \in \mathbb{N}$,
	\begin{align}\label{e: tracecalk}
	2^{\alpha} &= \Big| \sum\limits_{\mathbf{k} \in \mathbf{I}_{\alpha}} \left\langle B_{\mathbf{kj}} A_{\mathbf{k}^{*}}, \tau' \right\rangle \Big| = \Big| \Big\langle \sum\limits_{\mathbf{k} \in \mathbf{I}_{\alpha}} B_{\mathbf{kj}} A_{\mathbf{k}^{*}}, \tau' \Big\rangle \Big| \leqslant \| \tau' \| \Big\| \sum\limits_{\mathbf{k} \in \mathbf{I}_{\alpha}} B_{\mathbf{kj}} A_{\mathbf{k}^{*}} \Big\| = \| \tau' \|,
	\end{align}
	which is nonsense.
	
	Now suppose that $w = s_{\bf j}^*$, so that
	\begin{align}
	\langle B_{\bf k} A_{({\bf kj})^*}, \tau' \rangle
	= \langle B_{\bf k} A_{\bf j^*} A_{\bf k^*}, \tau' \rangle
	= \langle A_{\bf j^*}  A_{\bf k^*} B_{\bf k}, \tau' \rangle
	= \langle A_{\bf j^*}, \tau' \rangle
	= \tau(s_{\bf j}^*) = 1,
	\end{align}
	analogously to the calculation in \eqref{e: trace_one}. Using Lemma~\ref{l: contraction} and an estimate completely similar to \eqref{e: tracecalk} this again leads to a contradiction.
	
	Hence there are no non-zero bounded traces on $\mathcal{O}_2^p$.
\end{proof}

\begin{remark}
	It is much easier to prove however the weaker statement that there are no \textit{normalised} traces on $\mathcal{O}_2^p$. Indeed, let $\tau'$ be a trace on $\mathcal{O}_2^p$. As $I_{\ell^p} = I_{\ell^p} + I_{\ell^p} - I_{\ell^p} = A_1 B_1 + A_2 B_2 - (B_1 A_1 + B_2 A_2)$, the equality $\left\langle I_{\ell^p}, \tau' \right\rangle =0$ follows; whence $\tau'$ cannot be normalised.
\end{remark}

We end the paper with a question of interest to Banach algebraists.  Motivated by \cite{p2}, we ask if $\mc A/\mc J$ is an \emph{amenable} Banach algebra?  Phillips shows that $\mc O^p_2$ is amenable, but his techniques do not appear applicable to $\mc A/\mc J$ due to differing (again, incomparable) norm estimates.  However, if $\mc A/\mc J$ were amenable, this would immediately give a new proof that $\mc O^p_2$ is amenable.

\begin{ack}
We thank Yemon Choi and Hannes Thiel for helpful remarks about the wider literature, and Yemon Choi for a suggestion about looking at traces on $\mc A/\mc J$. We are extremely grateful to the anonymous referee for the unusually detailed report and for the constructive suggestions therein. We thank them especially for also suggesting the study of traces on $\mc A / \mc J$ and $\mc O^p_2$.
The first-named author is partially supported by EPSRC grant EP/T030992/1.
The second-named author acknowledges with thanks the funding received from GA\v{C}R project 19-07129Y; RVO 67985840 (Czech Republic).
\end{ack}

\Addresses

\end{document}